\newcommand{\N}{\mathbb{N}}
\newcommand{\R}{\mathbb{R}}
\newcommand{\C}{\mathcal{C}}
\newcommand{\ep}{\varepsilon}
\newcommand{\pa}{\partial}
\newcommand{\II}{I\hspace{-1.2pt}I}
\newcommand{\III}{I\hspace{-1.2pt}I\hspace{-1.2pt}I}
\newcommand{\supu}{\overline{u}}
\newcommand{\supv}{\overline{v}}
\newcommand{\lr}[1]{\langle#1\rangle}
\DeclareMathOperator{\supp}{supp}
\newtheorem{theorem}{Theorem}[section]
\newtheorem{lemma}[theorem]{Lemma}
\theoremstyle{remark}
\newtheorem{remark}{Remark}[section]
\theoremstyle{definition}
\newtheorem*{assumption}{Assumption}
\numberwithin{equation}{section}
\title{\Large\textbf{The critical Fujita exponent for one-dimensional semilinear heat equations with potentials and space-dependent nonlinearities}}
\author{
Reiri Miyamoto\footnote{
Department of Mathematics,
Faculty of Science and Technology,
Tokyo University of Science,
2641 Yamazaki,
Noda-shi,
Chiba,
278-8510,
Japan,
E-mail: \texttt{rmiyamoto2001@outlook.jp}
}
\and
Motohiro Sobajima\footnote{
Department of Mathematics,
Faculty of Science and Technology,
Tokyo University of Science,
2641 Yamazaki,
Noda-shi,
Chiba,
278-8510,
Japan,
E-mail: \texttt{msobajima1984@gmail.com}
}
}
\date{}
\begin{document}

\maketitle

\newenvironment{summary}{\vspace{.5\baselineskip}\begin{list}{}{
\setlength{\baselineskip}{0.85\baselineskip}
\setlength{\topsep}{0pt}
\setlength{\leftmargin}{12mm}
\setlength{\rightmargin}{12mm}
\setlength{\listparindent}{0mm}
\setlength{\itemindent}{\listparindent}
\setlength{\parsep}{0pt}
\item\relax
}
}
{\end{list}\vspace{.5\baselineskip}}

\begin{summary}
{\footnotesize\textbf{Abstract.}
This paper is concerned with the existence/nonexistence of nontrivial global-in-time solutions to the Cauchy problem
\begin{equation}
	\begin{cases}
		\label{P}
		\tag{P}
		\pa_tu-\pa_x^2u+Vu=(1+x^2)^{-\frac{m}{2}}u^p,&x\in\R,\ t>0,\\
		u(x,0)=u_0(x)\ge0,&x\in\R,
	\end{cases}
\end{equation}
where $p>1$, $m\ge0$, $u_0\in BC(\R)$ and the potential $V=V(x)\in BC(\R)$ satisfies a certain property.
More precisely, we determine the critical Fujita exponent for \eqref{P}, that is, the threshold for the global existence/nonexistence of \eqref{P}.
}
\end{summary}

{\footnotesize{\itshape Mathematics Subject Classification}\/ (2020):
Primary:35K58,
Secondary:35B33.
}

{\footnotesize{\itshape Key words and phrases}\/:
Semilinear heat equations with potentials;
critical Fujita exponent;
blow-up phenomenon arising from recurrence of Brownian motion.
}

\tableofcontents

\newpage

\section{Introduction}
\label{section1}
In this paper, we consider the Cauchy problem of one-dimensional semilinear heat equations of the form
\begin{equation}
	\label{main eq}
	\begin{cases}
		\pa_tu-\pa_x^2u+Vu=\lr{x}^{-m}u^p,&x\in\R,\ t>0,\\
		u(x,0)=u_0(x)\ge0,&x\in\R,
	\end{cases}
\end{equation}
where $p>1$, $m\ge0$, $\lr{x}=\sqrt{1+x^2}$, $u_0\in BC(\R)$ and the potential $V=V(x)\in BC(\R)$ satisfies a certain property specified below.
Throughout the present paper, we say that $u$ is a global-in-time solution to \eqref{main eq} if $u\in C^{2;1}(\R\times(0,\infty))\cap C(\R\times[0,\infty))$ and $u$ satisfies \eqref{main eq} in $\R\times(0,\infty)$.

The aim of the present paper is to discuss the global existence/nonexistence of \eqref{main eq}, that is, the existence/nonexistence of nontrivial global-in-time solutions to \eqref{main eq}.
Our interest is how the potential $V$ and the space-dependent weight $\lr{x}^{-m}$ contained in the nonlinearity affect the critical phenomenon for the global existence/nonexistence of \eqref{main eq}.

In Fujita \cite{F1966}, the following Cauchy problem was considered:
\begin{equation}
	\label{Fujita eq}
	\begin{cases}
		\pa_tu-\Delta u=u^p,&x\in\R^N,\ t>0,\\
		u(x,0)=u_0(x)\ge0,&x\in\R^N,
	\end{cases}
\end{equation}
where $N\in\N$.
He proved the following:
\begin{enumerate}
	\item If $1<p<1+\frac{2}{N}$, then \eqref{Fujita eq} does not have nontrivial global-in-time solutions;
	\item If $p>1+\frac{2}{N}$, then \eqref{Fujita eq} possesses a nontrivial global-in-time solution for some initial data.
\end{enumerate}
The threshold $p_\text{F}(N)=1+\frac{2}{N}$ for the global existence/nonexistence of \eqref{Fujita eq} is called the critical Fujita exponent for \eqref{Fujita eq}.
In subsequent papers by Hayakawa \cite{H1973}, Sugitani \cite{S1975} and Kobayashi--Sirao--Tanaka \cite{KST1977}, it was shown that the statement \textbf{(i)} also holds for the critical case $p=p_\text{F}(N)$.
After that, similar results for various nonlinear evolution equations have been studied by many mathematicians (for details, see e.g. Levine \cite{L1990}, Deng--Levine \cite{DL2000}, Quittner--Souplet \cite{QS2007} and references therein).

In Pinsky \cite{P1997}, the following Cauchy problem with space-dependent nonlinearities was considered:
\begin{equation}
	\label{Pinsky eq}
	\begin{cases}
		\pa_tu-\Delta u=bu^p,&x\in\R^N,\ t>0,\\
		u(x,0)=u_0(x)\ge0,&x\in\R^N,
	\end{cases}
\end{equation}
where $b=b(x)$ is a nonnegative H\"{o}lder continuous function satisfying $b_1|x|^l\le b(x)\le b_2|x|^l$ for some constants $l\in\R$, $b_1,b_2>0$ and sufficiently large $|x|$.
He determined the critical Fujita exponent $p_\text{P}(N,l)$ for \eqref{Pinsky eq} as follows:
\[
p_\text{P}(N,l)=
\begin{cases}
	\max\{2,3+l\},&\text{if}\ N=1,\\
	1+\frac{[2+l]_+}{N},&\text{if}\ N\ge2.
\end{cases}
\]
More precisely, he proved the following:
\begin{enumerate}
	\item If $1<p\le p_\text{P}(N,l)$, then \eqref{Pinsky eq} does not have nontrivial global-in-time solutions;
	\item If $p>p_\text{P}(N,l)$, then \eqref{Pinsky eq} possesses a nontrivial global-in-time solution for some initial data.
\end{enumerate}
In particular, if $N=1$ and $1<p\le2$, then the global existence of \eqref{Pinsky eq} is always false even if $l$ approaches $-\infty$.
This phenomenon can be explained by the recurrence of the one-dimensional Brownian motion.

Critical Fujita exponents for semilinear heat equations with potentials were first studied by Zhang \cite{Z2001} and subsequently studied by Ishige \cite{I2008} and Pinsky \cite{P2009}.
Recently, in Ishige--Kawakami \cite{IK2020}, the following Cauchy problem was considered:
\begin{equation}
	\label{Ishige--Kawakami eq}
	\begin{cases}
		\pa_tu-\Delta u+Wu=u^p,&x\in\R^N,\ t>0,\\
		u(x,0)=u_0(x)\ge0,&x\in\R^N,
	\end{cases}
\end{equation}
where the potential $W=W(x)$ satisfies
\begin{equation}
	\label{ass on W}
	\begin{cases}
		\textbf{(i)}&W=W(r)\in C^1([0,\infty));\\
		\textbf{(ii)}&\lim_{r\to\infty}r^\theta|r^2W(r)-\lambda|=0\quad\text{for some}\ \theta>0,\ \lambda\in\R;\\
		\textbf{(iii)}&\sup_{r\ge1}|r^3W^\prime(r)|<\infty.
	\end{cases}
\end{equation}
In particular, in \cite{IK2020}, the critical Fujita exponent was observed via the notion in the criticality theory for Schr\"{o}dinger operators (cf. Remark \ref{criticality theory}).
The asymptotic behavior of the unique (positive) solution $U$ to the Cauchy problem of the linear ordinary differential equation
\[
\begin{cases}
		U^{\prime\prime}+\frac{N-1}{r}U^\prime-W(r)U=0,&r>0,\\
		U(0)=1,\ U^\prime(0)=0
\end{cases}
\]
can be classified as
\[
U(r)
\begin{cases}
	\sim U_0r^{\alpha_\lambda^+},&\text{if}\ S\ \text{is subcritical},\ \lambda>\lambda_*(N),\\
	\sim U_0r^{\alpha_\lambda^+}\log(1+r),&\text{if}\ S\ \text{is subcritical},\ \lambda=\lambda_*(N),\\
	\sim U_0r^{\alpha_\lambda^-},&\text{if}\ S\ \text{is critical},\\
	\text{does not exist},&\text{if}\ S\ \text{is supercritical} 
\end{cases}
\]
as $r\to\infty$, where $U_0>0$, $\lambda_*(N)=-\frac{(N-2)^2}{4}$ and $\alpha_\lambda^\pm$ are the roots of the algebraic equation $\alpha(\alpha+N-2)=\lambda$ with $\alpha_\lambda^-\le\alpha_\lambda^+$, that is, $\alpha_\lambda^\pm=\frac{-(N-2)\pm\sqrt{(N-2)^2+4\lambda}}{2}$, see Murata \cite{M1986} (also note that if $W$ satisfies \eqref{ass on W} and $S$ is nonnegative, then $\lambda\ge\lambda_*(N)$, see e.g. \cite{P2009}).
Using the asymptotic profile of $U$, Ishige--Kawakami \cite{IK2020} determined the critical Fujita exponent for \eqref{Ishige--Kawakami eq} as follows:
\[
p_\text{IK}(N,\lambda)=
\begin{cases}
	1+\frac{2}{N+\alpha_\lambda^+},&\text{if}\ S\ \text{is subcritical},\\
	1+\frac{2}{N+\alpha_\lambda^-},&\text{if}\ S\ \text{is critical},\ \alpha_\lambda^-\ge\alpha_*(N),\\
	\frac{2}{N+2\alpha_\lambda^-},&\text{if}\ S\ \text{is critical,}\ -\frac{N}{2}<\alpha_\lambda^-<\alpha_*(N),\\
	\infty,&\text{if}\ S\ \text{is critical},\ \alpha_\lambda^-<-\frac{N}{2},\\
	\infty,&\text{if}\ S\ \text{is supercritical},
\end{cases}
\]
where $\alpha_*(N)\in\left(-\frac{N}{2},-\frac{N-2}{2}\right)$ is the real number satisfying $1+\frac{2}{N+\alpha_*(N)}=\frac{2}{N+2\alpha_*(N)}$, that is, $\alpha_*(N)=\frac{-(3N+2)+\sqrt{N^2+12N+4}}{4}$.
More precisely, they proved the following:
\begin{enumerate}
	\item If $1<p<p_\text{IK}(N,\lambda)$, then \eqref{Ishige--Kawakami eq} does not have nontrivial global-in-time solutions;
	\item If $p>p_\text{IK}(N,\lambda)$, then \eqref{Ishige--Kawakami eq} possesses a nontrivial global-in-time solution for some initial data.
\end{enumerate}
Roughly speaking, their strategy is to employ the heat kernel estimate developed in Grigor'yan--Saloff-Coste \cite{GS2005}; note that global existence/nonexistence in some threshold cases including $\alpha_\lambda^-=-\frac{N}{2}$ has been left open.

The purpose of the present paper is to discuss the global existence/nonexistence of the one-dimensional problem \eqref{main eq} including the threshold case as mentioned above. In particular, to find the phenomenon also in the threshold case, we shall employ an alternative approach which is explained later and different from Ishige--Kawakami \cite{IK2020} (the use of heat kernels).

Now we are in a position to state the main result of the present paper.
We assume that $V$ satisfies the following conditions.

\begin{assumption}
	There exists an even positive function $\psi\in C^2(\R)$ satisfying $\psi(0)=1$,
	\begin{equation}
		\label{prop1 of psi}
		\lim_{|x|\to\infty}|x|^{-\alpha}\psi(x)=\psi_0
	\end{equation}
	for some constants $\alpha\in\R$ and $\psi_0>0$, and
	\begin{equation}
		\label{prop2 of psi}
		\sup_{x\in\R}\left|\frac{x\psi^\prime(x)}{\psi(x)}\right|<\infty
	\end{equation}
	such that
	\begin{equation}
		\label{eq of psi}
		V(x)=\frac{\psi^{\prime\prime}(x)}{\psi(x)},\quad x\in\R.
	\end{equation}
\end{assumption}

\begin{remark}
	\begin{enumerate}
		\item By the uniqueness of the solution to the Cauchy problem of the linear ordinary differential equations with \eqref{eq of psi} and $(\psi(0),\psi^\prime(0))=(1,0)$, we see that $\psi$ is unique.
		\item Since $\psi$ is positive and \eqref{prop1 of psi} holds, there exist positive constants $\psi_1$ and $\psi_2$ such that $\psi_1\lr{x}^\alpha\le\psi(x)\le\psi_2\lr{x}^\alpha$ for every $x\in\R$.
		\item In the one-dimensional case in \eqref{ass on W}, by the Liouville--Green approximation (see e.g. \cite{O1974}), we see that $W$ satisfies our assumption except for the case where $S$ is subcritical and $\lambda=\lambda_*(1)$ or $S$ is supercritical.
	\end{enumerate}
\end{remark}

We define $p_*(\alpha,m)$ by
\begin{equation}
	\label{def of p_*}
	p_*(\alpha,m)=
	\begin{cases}
		1+\frac{[2-m]_+}{1+\alpha},&\text{if}\ \alpha>\frac{1}{2},\\
		\max\left\{\frac{2}{1+2\alpha},1+\frac{2-m}{1+\alpha}\right\},&\text{if}\ \alpha_*(1)\le\alpha\le\frac{1}{2},\\
		\frac{2}{1+2\alpha},&\text{if}\ -\frac{1}{2}<\alpha<\alpha_*(1),\\
		\infty,&\text{if}\ \alpha\le-\frac{1}{2}.
	\end{cases}
\end{equation}
The main result of the present paper is the following.

\begin{theorem}
	\label{main thm}
	The following statements hold:
	\begin{enumerate}
		\item If $1<p\le p_*(\alpha,m)$, then \eqref{main eq} does not have nontrivial global-in-time solutions;
		\item If $p>p_*(\alpha,m)$, then \eqref{main eq} possesses a nontrivial global-in-time solution for some initial data.
	\end{enumerate}
\end{theorem}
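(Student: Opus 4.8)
The plan is to reduce \eqref{main eq} to a divergence-form weighted heat equation and then treat \textbf{(i)} by the test-function (weak formulation) method and \textbf{(ii)} by constructing an explicit global supersolution. Putting $u=\psi v$ and using \eqref{eq of psi} one gets
\[
\pa_t v-\frac1{\psi^2}\pa_x\bigl(\psi^2\pa_x v\bigr)=\lr{x}^{-m}\psi^{p-1}v^p,\qquad x\in\R,\ t>0,
\]
so $v$ solves a heat equation whose generator $L=\psi^{-2}\pa_x(\psi^2\pa_x\,\cdot\,)$ is symmetric in $L^2(\psi^2\,dx)$. By \eqref{prop1 of psi} the measure $\psi^2\,dx$ has volume growth $\lr{x}^{1+2\alpha}$, hence the diffusion generated by $L$ is transient iff $\alpha>\frac12$ (equivalently $\int^\infty\psi^{-2}<\infty$), and when $\alpha<\frac12$ the second, linearly independent solution of $(-\pa_x^2+V)w=0$ grows like $\lr{x}^{1-\alpha}$; this dichotomy is what produces the four regimes of $p_*(\alpha,m)$ in \eqref{def of p_*}.

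\emph{Part (i).} Assume $u$ is a nontrivial global-in-time solution. I would test \eqref{main eq} against $\varphi(x,t)=\psi(x)\,\chi(x/R)^{2p}\eta(t/R^2)$ with $\chi,\eta$ standard cut-offs; integrating by parts and using \eqref{eq of psi} to cancel the potential term gives
\[
\iint \lr{x}^{-m}u^p\psi\,\chi^{2p}\eta\,dx\,dt\le\iint u\psi\,\chi^{2p}|\pa_t\eta|\,dx\,dt+\iint u\,\bigl|2\psi'\pa_x(\chi^{2p})+\psi\pa_x^2(\chi^{2p})\bigr|\eta\,dx\,dt.
\]
By Hölder's and Young's inequalities (the power $2p$ on $\chi$ makes the error weights integrable), the parabolic scaling $T=R^2$, and $\psi\sim\psi_0\lr{x}^\alpha$, both right-hand terms are $\le CR^{(1+\alpha)-\frac{2-m}{p-1}}\bigl(\iint\lr{x}^{-m}u^p\psi\,dx\,dt\bigr)^{1/p}$, whence
\[
\iint_{\R\times(0,\infty)}\lr{x}^{-m}u^p\psi\,dx\,dt\le CR^{(1+\alpha)-\frac{2-m}{p-1}}.
\]
Letting $R\to\infty$ forces $u\equiv0$ when $(1+\alpha)-\frac{2-m}{p-1}<0$, i.e.\ $p<1+\frac{2-m}{1+\alpha}$; the borderline $p=1+\frac{2-m}{1+\alpha}$ follows by the usual bootstrap, since the $R$-independent bound gives $\lr{x}^{-m}u^p\psi\in L^1(\R\times(0,\infty))$, so the integrals over the cut-off regions tend to $0$ and the same inequality yields $u\equiv0$. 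This disposes of $p\le1+\frac{2-m}{1+\alpha}$. The remaining range (nonempty exactly when $\alpha<\frac12$, and equal to all of $(1,\infty)$ when $\alpha\le-\frac12$, where $p_*=\infty$) is governed by recurrence of the diffusion generated by $L$: using that the fundamental solution of $L$ decays no faster than $t^{-(1+2\alpha)/2}$ — or, to keep the argument heat-kernel-free, a time-dependent test function adapted to $L$ and concentrated on the scale $\sqrt{T-t}$ — an iterated lower bound along the Duhamel formula rules out nontrivial nonnegative global solutions for $1<p\le\frac2{1+2\alpha}$, again including the endpoint via a bootstrap. Together the two arguments cover precisely $1<p\le p_*(\alpha,m)$.

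\emph{Part (ii).} For $p>p_*(\alpha,m)$ I would seek a global supersolution of the form
\[
\supu(x,t)=\ep\,(1+t)^{-\sigma}\psi(x)\exp\!\Bigl(-\frac{x^2}{4(1+t)}\Bigr),\qquad \ep>0\ \text{small},
\]
with $\sigma$ chosen so that (a) $\sigma\le\frac{1+2\alpha}2$, which lets $\supu$ dominate the linear evolution of a small datum (the heat kernel of $-\pa_x^2+V$ being $\psi(x)\psi(y)$ times that of $L$), and (b) $\lr{x}^{-m}\supu^p$ is subordinate to $\pa_t\supu-\pa_x^2\supu+V\supu$ both in the region $|x|\sim\sqrt{1+t}$ (which needs $\sigma\ge\frac{2-m}{2(p-1)}+\frac\alpha2$) and, when $\alpha<\frac12$, near the diagonal of the associated kernel; the existence of such a $\sigma$ is equivalent to $p>p_*(\alpha,m)$. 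Verifying $\pa_t\supu-\pa_x^2\supu+V\supu\ge\lr{x}^{-m}\supu^p$ is then a direct computation, after which the comparison principle and local well-posedness give a nontrivial global-in-time solution for every $u_0$ with $0\le u_0\le\supu(\cdot,0)$ and $u_0\not\equiv0$.

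\emph{Main difficulty.} I expect the core obstacle to be the recurrence-driven range together with the threshold values $\alpha\in\{\alpha_*(1),\pm\frac12\}$ and $p=p_*$: extracting the sharp constant $\frac2{1+2\alpha}$ (rather than a weaker exponent) from recurrence, handling the logarithmic corrections that surface at $\alpha=\pm\frac12$ and at the critical $p$, and justifying the integrations by parts in \textbf{(i)} and the comparison principle in \textbf{(ii)} for merely bounded, non-decaying solutions by a suitable localization and approximation.
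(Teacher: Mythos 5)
Your overall architecture (a $\psi$-weighted test function for nonexistence, a supersolution for existence) matches the paper's in outline, but both halves have genuine gaps at exactly the points that make this theorem nontrivial.

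For part \textbf{(i)}, the weighted test-function computation correctly disposes of $1<p\le 1+\frac{[2-m]_+}{1+\alpha}$, including the endpoint, essentially as in the paper. But the recurrence range $1<p\le\frac{2}{1+2\alpha}$ (and all $p>1$ when $\alpha\le-\frac12$) is the heart of the result, and "an iterated lower bound along the Duhamel formula" is not an argument: the exponent $\frac{2}{1+2\alpha}$ does not come out of the plain test-function estimate, whose error term lives on the annulus $|x|\sim\sqrt R$ where the solution may be small. The paper's mechanism is structural: replace $\lr{x}^{-m}u^p$ by $a u^p$ with $a$ compactly supported (comparison principle), prove that $\psi^{-1}u$ is monotone nonincreasing in $|x|$ (a Pinsky-type maximum-principle argument for $w=\psi^2\pa_x(\psi^{-1}u)$), and use this monotonicity to bound the annulus error by $\sqrt R$ times the local quantity $\int a u^p\psi$ near the origin, which is then absorbed. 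Moreover, your claim that the endpoint $p=\frac{2}{1+2\alpha}$ follows "via a bootstrap" is unsupported; the paper needs a separate lemma (a logarithmically divergent lower bound $\int_0^t\|\psi^{-1}e^{-sL}u_0^-\|_{L^\infty}^{2/(1+2\alpha)}\,ds\ge C\log(1+t)$) combined with the Harnack inequality and the monotonicity lemma. None of this is present or replaceable by what you wrote.

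For part \textbf{(ii)}, the proposed supersolution $\supu=\ep(1+t)^{-\sigma}\psi(x)e^{-x^2/(4(1+t))}$ cannot cover the full range $p>p_*(\alpha,m)$. Computing $\pa_t\supu-\pa_x^2\supu+V\supu=\ep(1+t)^{-\sigma-1}e^{-x^2/(4(1+t))}\bigl[(\tfrac12-\sigma)\psi+x\psi'\bigr]$ and evaluating the pointwise inequality at $x=0$ (where $\lr{x}^{-m}=1$, so $m$ gives no help) forces $\sigma(p-1)\ge1$, while dominating the linear evolution forces $\sigma\le\frac{1+2\alpha}{2}$; hence your construction requires $p\ge1+\frac{2}{1+2\alpha}$, which is strictly larger than $p_*(\alpha,m)$ whenever $p_*=\frac{2}{1+2\alpha}$ (the critical range $\alpha\le\frac12$) and also fails for $\alpha>\frac12$ with $m\ge2$, where $p_*=1$. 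This is not a technicality: no pointwise differential supersolution pinned to the linear decay rate at the origin can work there, because global existence in those regimes exploits the spatial localization of $\lr{x}^{-m}u^p$ through the Duhamel integral. The paper accordingly avoids pointwise supersolutions for $\alpha\ge0$ (using instead Nash-type inequalities, decay estimates for $e^{-tL}$ in $\psi$-weighted $L^1$/$L^\infty$ norms, and a contraction/bootstrap on the integral equation), and for $\alpha<0$ verifies only the \emph{integral} supersolution inequality for a profile $\lr{x}^{\alpha}(1+\lr{x}^2+t)^{-\frac{1+2\alpha}{2}+\delta}$ with polynomial (not Gaussian) tails, obtained via the Liouville transform. (Also, the heat kernel of $-\pa_x^2+V$ is not ``$\psi(x)\psi(y)$ times'' a Gaussian kernel; the conjugation by $\psi$ changes the operator to a weighted divergence-form one whose kernel must itself be estimated.)
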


\begin{remark}
	\label{criticality theory}
	In the criticality theory for Schr\"{o}dinger operators, a Schr\"{o}dinger operator in $L^2(\R)$ is said to be subcritical (resp. critical) if it is nonnegative and stable (resp. unstable) under the small perturbation with negative potentials, and a Schr\"{o}dinger operator in $L^2(\R)$ is said to be supercritical if it is not nonnegative.
	In the present paper, the subcriticality corresponds to the case $\alpha>\frac{1}{2}$, and the criticality corresponds to the case $\alpha\le\frac{1}{2}$.
\end{remark}

\begin{remark}
	In Theorem \ref{main thm}, the following two cases provide an answer to the problem which is similar to that left open in \cite{IK2020} ($m=0$):
	\begin{enumerate}
		\item $-\frac{1}{2}<\alpha<\alpha_*(1)$ and $p=p_*(\alpha,0)$;
		\item $\alpha=-\frac{1}{2}$ and $p>1$.
	\end{enumerate}
\end{remark}

Here we would briefly explain the strategy of the present paper.
First, we explain the strategy for the existence part (Theorem \ref{main thm} \textbf{(ii)}).
To this aim, we consider the following three cases:
\begin{center}
\textbf{(I)} $\alpha>\frac{1}{2}$;\quad\textbf{(\II)} $0\le\alpha\le\frac{1}{2}$;\quad\textbf{(\III)} $\alpha<0$.
\end{center}
We shall start by describing the definition of the Schr\"{o}dinger operator $L$.
We define the Schr\"{o}dinger operator $L$ in $L^2(\R)$ by
\[
\begin{cases}
	Lf=-f^{\prime\prime}+Vf,&f\in D(L),\\
	D(L)=H^2(\R).
\end{cases}
\]
Since $L$ is nonnegative and self-adjoint in $L^2(\R)$, we see that $L$ generates an analytic $C_0$-semigroup of contractions $\{e^{-tL}\}_{t\ge0}$ on $L^2(\R)$.
In the cases \textbf{(I)} and \textbf{(\II)}, we deduce the desired decay estimate for $\{e^{-tL}\}_{t\ge0}$ via the Nash-type inequality
\[
\|f\|_{L^2}^{2+\frac{4}{1+2\alpha}}\le C_\textup{N}\|\psi f\|_{L^1}^\frac{4}{1+2\alpha}\|L^\frac{1}{2}f\|_{L^2}^2,\quad f\in H^1(\R)\cap L^{1,\alpha},
\]
where $C_\text{N}$ is a positive constant and
\[
L^{1,\alpha}=\{f\in L^2(\R)\mid\lr{\cdot}^\alpha f\in L^1(\R)\}
\]
(a similar treatment can be found in \cite{S2024}).
Combining the obtained decay estimate for $\{e^{-tL}\}_{t\ge0}$ and some functional inequalities developed in Section \ref{section2}, we can reach the desired result.
In the case \textbf{(\III)}, we deduce an upper bound for $e^{-tL}[\lr{\cdot}^{-1-\alpha}]$ via a suitable supersolution to the corresponding linear equation of \eqref{main eq} with the polynomially decaying property at spatial infinity.
Such a supersolution can be found via the Liouville transform $v\mapsto v^*$ defined by
\[
v^*(H(x),t)=\psi(x)^{-1}v(x,t),\quad H(x)=\int_0^x\psi(y)^{-2}\,dy,\quad x\in\R,\ t\ge0
\]
with the knowledge of \cite{SW2021}.
By the estimate for $e^{-tL}[\lr{\cdot}^{-1-\alpha}]$, we could obtain the global existence.

Next, concerning the nonexistence part (Theorem \ref{main thm} \textbf{(i)}), we divide the proof into the following four cases:
\begin{enumerate}
	\item[\textbf{(A)}] $\alpha\ge\alpha_*(1)$ and $1<p\le1+\frac{[2-m]_+}{1+\alpha}$;
	\item[\textbf{(B)}] $-\frac{1}{2}<\alpha<\frac{1}{2}$ and $1<p<\frac{2}{1+2\alpha}$;
	\item[\textbf{(C)}] $\alpha\le-\frac{1}{2}$ and $p>1$;
	\item[\textbf{(D)}] $-\frac{1}{2}<\alpha<\frac{1}{2}$ and $p=\frac{2}{1+2\alpha}$.
\end{enumerate}
Our treatment is based on the test function method introduced in \cite{MP2001} (TFM).
In the case \textbf{(A)}, the TFM directly provides the desired result.
In the cases \textbf{(B)} and \textbf{(C)}, by the comparison principle, it suffices to prove the global nonexistence for the Cauchy problem of the following equation:
\begin{equation}
	\label{main eq var}
	\pa_tu-\pa_x^2u+Vu=au^p\quad\text{in}\ \R\times(0,\infty),
\end{equation}
where $a\in C_0^\infty(\R)\backslash\{0\}$ is chosen as a nonnegative function with a reasonable property.
Through the harmonic transform $u_*=\psi^{-1}u$, the equation \eqref{main eq var} can be translated into
\begin{equation}
	\label{double parabolic}
	\pa_tu_*-\psi^{-2}\pa_x(\psi^2\pa_xu_*)=a\psi^{p-1}u_*^p\quad\text{in}\ \R\times(0,\infty).
\end{equation}
Then we can expect that the blow-up phenomenon for \eqref{double parabolic} arising from the recurrence of the Brownian motion on the weighted Riemannian manifold $(\R,\psi^2dx)$ occurs.
Since $u\sim u_*$ near the origin, this phenomenon can be seen also for \eqref{main eq var}.
To detect the above phenomenon, we adopt a modified version of the method used in \cite{P1997}.
Together with the TFM, we could arrive at the desired result.
In the case \textbf{(D)}, combining a modified version of the method used in \cite{P1997}, the TFM and the test function method for the corresponding linear equation of \eqref{main eq} introduced in \cite{S2024} (LTFM), we could obtain the desired result.

Finally, we summarize the strategy of the proof of Theorem \ref{main thm} in Table \ref{table1} and Table \ref{table2}.

\begin{table}[htbp]
	\centering
  	\begin{tabular}{|c||c|}\hline
  		Case&Strategy\\ \hline\hline
  		(I)&\multirow{2}{*}{Decay estimate for $\{e^{-tL}\}_{t\ge0}$ \& Some functional inequalities}\\ \cline{1-1}
	  	(\II)&\\ \hline
  		(\III)&Upper bound for $e^{-tL}[\lr{\cdot}^{-1-\alpha}]$\\ \hline
  	\end{tabular}
  	\caption{Strategy of proof of global existence part}
  	\label{table1}
\end{table}

\begin{table}[htbp]
	\centering
  	\begin{tabular}{|c||c|}\hline
  		Case&Strategy\\ \hline\hline
  		(A)&TFM\\ \hline
	  	(B)&\multirow{2}{*}{TFM \& Modified version of method used in \cite{P1997}}\\ \cline{1-1}
  		(C)&\\ \hline
  		(D)&TFM \& Modified version of method used in \cite{P1997} \& LTFM\\ \hline
  	\end{tabular}
  	\caption{Strategy of proof of global nonexistence part}
  	\label{table2}
\end{table}

Throughout the present paper, we use $C$ to denote generic positive constants which may vary in different places, but they are not essential to the analysis.

The rest of the present paper is organized as follows:
In Section \ref{section2}, we collect some fundamental results which are essential in Section \ref{section3} and Section \ref{section4}.
In Section \ref{section3}, we discuss the global existence part (Theorem \ref{main thm} \textbf{(ii)}).
Our proof is based on the so-called blow-up alternative for the cases \textbf{(I)} and \textbf{(\II)}, and the so-called supersolution method for the case \textbf{(\III)}.
In Section \ref{section4}, we treat the nonexistence part (Theorem \ref{main thm} \textbf{(i)}).
Our proof is based on the so-called test function method but not the use of heat kernel estimates.

\section{Preliminaries}
\label{section2}
\subsection{Some functional inequalities and decay estimate for $\{e^{-tL}\}_{t\ge0}$}
In this subsection, we collect some functional inequalities and the decay estimate for $\{e^{-tL}\}_{t\ge0}$ in the case $\alpha\ge0$.
For $q\in[1,\infty]$ and $\sigma\in\R$, we define $L^{q,\sigma}$ by
\[
L^{q,\sigma}=\{f\in L^2(\R)\mid\lr{\cdot}^\sigma f\in L^q(\R)\}.
\]
The following lemma describes the contraction property and the positivity preserving property of $\{e^{-tL}\}_{t\ge0}$ (see \cite{S2024}).

\begin{lemma}
	\label{contr and posi}
	The following statements hold:
	\begin{enumerate}
		\item If $f\in L^{1,\alpha}$, then one has for every $t\ge0$,
		\begin{equation}
			\|\psi(e^{-tL}f)\|_{L^1}\le\|\psi f\|_{L^1};
		\end{equation}
		\item If $f\in L^{\infty,-\alpha}$, then one has for every $t\ge0$,
		\begin{equation}
			\|\psi^{-1}(e^{-tL}f)\|_{L^\infty}\le\|\psi^{-1}f\|_{L^\infty};
		\end{equation}
		\item If $f\in L^2(\R)$ is nonnegative, then $e^{-tL}f$ is also nonnegative for every $t\ge0$.
	\end{enumerate}
\end{lemma}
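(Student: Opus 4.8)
The structural hypothesis \eqref{eq of psi} says exactly that $\psi$ is a positive $L$-harmonic function, $L\psi=-\psi''+V\psi=0$, so the natural plan is to carry out the ground state transform (Doob's $h$-transform) with $h=\psi$. Define $U\colon L^2(\R)\to L^2(\R;\psi^2\,dx)$ by $Uf=\psi^{-1}f$; since $\int_\R|\psi^{-1}f|^2\psi^2\,dx=\int_\R|f|^2\,dx$, the map $U$ is a unitary isomorphism. I would then show that $U$ intertwines $L$ with the weighted Laplacian
\[
A:=-\psi^{-2}\pa_x(\psi^2\pa_x\,\cdot\,)\qquad\text{on }L^2(\R;\psi^2\,dx).
\]
At the level of quadratic forms this is the classical ground state substitution: for $f=\psi g$,
\[
\int_\R\bigl(|f'|^2+V|f|^2\bigr)\,dx=\int_\R|g'|^2\psi^2\,dx,
\]
which follows by expanding $f'=\psi'g+\psi g'$, using $\psi'^2+\psi\psi''=\tfrac12(\psi^2)''$ (a consequence of \eqref{eq of psi}), and integrating by parts. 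To upgrade this to $e^{-tL}f=\psi\,e^{-tA}(\psi^{-1}f)$ for all $t\ge0$ and $f\in L^2(\R)$, I must check that $U$ carries the form domain of $L$ (which is $H^1(\R)$, as $V\in BC(\R)$) onto the natural form domain of $A$ (the closure of $C_c^\infty(\R)$ in the form norm), and this is where \eqref{prop2 of psi} enters: together with $\psi'(0)=0$ it makes $\psi'/\psi$ bounded on $\R$, which controls the cross term $\psi^{-1}\psi'f$ produced when one differentiates $\psi^{-1}f$.

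Granting the identity $e^{-tL}f=\psi\,e^{-tA}(\psi^{-1}f)$, the three assertions reduce to the standard fact that $A$ generates a symmetric submarkovian semigroup on $L^2(\R;\psi^2\,dx)$: the form $\mathfrak a(g)=\int_\R|g'|^2\psi^2\,dx$ is a Dirichlet form (it is closed on the stated domain and normal contractions operate), so $e^{-tA}$ is positivity preserving and a contraction on $L^q(\R;\psi^2\,dx)$ for every $q\in[1,\infty]$; the $L^\infty$ bound comes directly from $A\mathbf1=0$, and the $L^1$ bound follows from it by duality and the self-adjointness of $e^{-tA}$ on $L^2(\R;\psi^2\,dx)$. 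Hence: \textbf{(iii)} if $f\ge0$ then $\psi^{-1}f\ge0$, so $e^{-tA}(\psi^{-1}f)\ge0$, so $e^{-tL}f=\psi\,e^{-tA}(\psi^{-1}f)\ge0$; \textbf{(ii)} $\|\psi^{-1}(e^{-tL}f)\|_{L^\infty}=\|e^{-tA}(\psi^{-1}f)\|_{L^\infty(\psi^2\,dx)}\le\|\psi^{-1}f\|_{L^\infty(\psi^2\,dx)}=\|\psi^{-1}f\|_{L^\infty}$, using that $\psi^2\,dx$ and $dx$ have the same null sets; \textbf{(i)} $\|\psi(e^{-tL}f)\|_{L^1}=\int_\R|e^{-tA}(\psi^{-1}f)|\,\psi^2\,dx\le\int_\R|\psi^{-1}f|\,\psi^2\,dx=\|\psi f\|_{L^1}$. (One checks along the way that $\psi^{-1}f$ lies in the relevant $L^q(\psi^2\,dx)$ space precisely when $f\in L^{1,\alpha}$, resp. $L^{\infty,-\alpha}$, via the two-sided bound $\psi\sim\lr{x}^\alpha$ from \eqref{prop1 of psi}.)

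The step I expect to be the main obstacle is making the unitary equivalence fully rigorous — namely that $U$ maps $D(L)=H^2(\R)$, and its form domain $H^1(\R)$, onto the domains of the self-adjoint realization of $A$ with its natural form, rather than onto those of some other self-adjoint extension; this is exactly the point at which the growth and regularity hypotheses \eqref{prop1 of psi}--\eqref{prop2 of psi} on $\psi$ are genuinely needed. A more elementary route that I would keep in reserve is: \textbf{(iii)} already follows from the Feynman--Kac formula (or a Trotter product expansion) since $V$ is bounded below; and \textbf{(i)}--\textbf{(ii)} can be obtained by a parabolic maximum principle argument on bounded intervals $(-R,R)$, using the supersolution $\psi$ (as $L\psi=0$) and letting $R\to\infty$. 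I would ultimately present the transform argument, since it is the shortest and it also sets up the weighted-manifold viewpoint $(\R,\psi^2\,dx)$ exploited elsewhere in the paper.
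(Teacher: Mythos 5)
Your proposal is correct in its core idea and that idea coincides with the paper's: both proofs rest entirely on the fact that $\psi$ is a positive solution of $-\psi''+V\psi=0$, so that the ground-state substitution $g=\psi^{-1}f$ turns $L$ into the weighted diffusion $-\psi^{-2}\pa_x(\psi^2\pa_x\cdot)$. The execution, however, is genuinely different. You invoke the abstract machinery (unitary equivalence, Dirichlet forms, sub-Markovian semigroups), and you correctly identify where that route gets delicate: one must verify that the image of the form domain $H^1(\R)$ under $f\mapsto\psi^{-1}f$ is stable under the normal contractions $g\mapsto|g|$ and $g\mapsto g\wedge 1$ (equivalently, that $f\wedge K\psi\in H^1(\R)$ for $f\in H^1(\R)$), which is not automatic when $\alpha<0$ and $\psi$ decays, and one must be careful that $L^\infty$-contractivity needs only sub-Markovianity $e^{-tA}\mathbf 1\le\mathbf 1$ rather than the conservativeness $A\mathbf 1=0$ you appeal to. The paper sidesteps exactly these points by running the same computation at the resolvent level: it fixes $f\in\C_K^*$, approximates $g=(1+\ep L)^{-1}f$ by $g_n\in C_0^2(\R)$, multiplies the transformed resolvent identity $f_n-g_n=-\ep\psi^{-1}(\psi^2h_n')'$ (with $h_n=\psi^{-1}g_n-K$) by the explicit truncation $\psi\zeta(h_n)$, and integrates by parts to get $(1+\ep L)^{-1}\C_K^*\subset\C_K^*$; the $L^1$ statement then follows from a duality characterization of $\C_M$ in terms of $\C_{1/M}^*$ together with self-adjointness of the resolvent, and the semigroup statement from the Post--Widder formula. (Note the paper deduces \textbf{(ii)} from \textbf{(i)} by duality, the reverse of your order — both are fine.) Two small slips in your write-up: $\psi'^2+\psi\psi''=\tfrac12(\psi^2)''$ is an algebraic identity, not a consequence of \eqref{eq of psi}, and the boundedness of $\psi'/\psi$ on compacta comes from $\psi\in C^2$ and $\psi>0$ rather than from $\psi'(0)=0$. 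If you complete the Dirichlet-form verification (or fall back on your reserve arguments: Feynman--Kac for \textbf{(iii)} and the maximum principle with supersolution $K\psi$ on $(-R,R)$ for \textbf{(ii)}), your route is a legitimate and more conceptual alternative; the paper's version buys self-containedness at the cost of a longer explicit computation.
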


\begin{proof}
		\textbf{(i)} For $M>0$ and $K>0$, we set
		\begin{gather*}
			\C_M=\{f\in L^2(\R)\mid\|\psi f\|_{L^1}\le M\},\\
			\C_K^*=\{f\in L^2(\R)\mid\|\psi^{-1}f\|_{L^\infty}\le K\}.
		\end{gather*}
		By the Fatou lemma, we see that $\C_M$ is closed in $L^2(\R)$.
		It suffices to prove $e^{-tL}\C_M\subset\C_M$ for every $M>0$.
		First, we prove $(1+\ep L)^{-1}\C_K^*\subset\C_K^*$ for every $\ep>0$ and $K>0$.
		We fix $f\in\C_K^*$ and set $g=(1+\ep L)^{-1}f$, then there exists $\{g_n\}_{n\in\N}\subset C_0^2(\R)$ such that $g_n\to g$ in $H^2(\R)$.
		We set $f_n=(1+\ep L)g_n$, then we have $f_n\to f$ in $L^2(\R)$.
		We fix a nonnegative function $\zeta\in C^1(\R)$ satisfying $0\le\zeta^\prime\in L^\infty(\R)$ and
		\[
		\begin{cases}
			\zeta^\prime(s)>0,&\text{if}\ s>0,\\
			\zeta(s)=0,&\text{if}\ s\le0.
		\end{cases}
		\]
		We set $h_n=\psi^{-1}g_n-K$, then we obtain $\zeta(h_n)\in C_0^1(\R)$.
		Multiplying $\psi\zeta(h_n)$ to the equality $f_n-g_n=-\ep\psi^{-1}(\psi^2h_n^\prime)^\prime$ and integrating it over $\R$, we see from integration by parts that
		\[
		\int_\R(f_n-g_n)\psi\zeta(h_n)\,dx=-\ep\int_\R(\psi^2h_n^\prime)^\prime\zeta(h_n)\,dx=\ep\int_\R\psi^2|h_n^\prime|^2\zeta^\prime(h_n)\,dx\ge0.
		\]
		We set $h=\psi^{-1}g-K$, then we have $\|\psi\zeta(h_n)-\psi\zeta(h)\|_{L^2}\le\|\zeta^\prime\|_{L^\infty}\|g_n-g\|_{L^2}\to 0$ as $n\to\infty$.
		Letting $n\to\infty$ in the above inequality, we obtain
		\[
		\int_\R(f-g)\psi\zeta(h)\,dx\ge0.
		\]
		Therefore we get
		\[
		0\le\int_\R h\psi^2\zeta(h)\,dx=\int_\R(g-K\psi)\psi\zeta(h)\,dx\le\int_\R(f-K\psi)\psi\zeta(h)\,dx\le0.
		\]
		This implies that $g\le K\psi$.
		Similarly, we have $-g\le K\psi$.
		Thus we obtain $g\in C_K^*$.
		
		Next, we prove the following representation for every $M>0$, 
		\[
		\C_M=\left\{f\in L^2(\R)\ \middle|\ \int_\R fg\,dx\le1\quad\text{for every}\ g\in\C_\frac{1}{M}^*\right\}.
		\]
		If $f$ belongs to the right-hand side of the above representation, then we have for every $n\in\N$,
		\[
		\int_\R\frac{|f|^2}{|f|+n^{-1}\psi}\psi\,dx=M\int_\R fg_n\,dx\le M,
		\]
		where $g_n=\frac{n\psi}{M(n|f|+\psi)}f\in\C_\frac{1}{M}^*$.
		Letting $n\to\infty$ in the above inequality, by the monotone convergence theorem, we obtain $f\in\C_M$.
		Conversely, if $f\in\C_M$, then we get for every $g\in\C_{\frac{1}{M}}^*$,
		\[
		\int_\R fg\,dx\le\|\psi f\|_{L^1}\|\psi^{-1}g\|_{L^\infty}\le1.
		\]
		Therefore we see that $f$ belongs to the right-hand side of the above representation.
		
		Finally, we prove $e^{-tL}\C_M\subset\C_M$ for every $M>0$.
		We fix $f\in\C_M$ and $\ep>0$.
		By the self-adjointness of $(1+\ep L)^{-1}$ in $L^2(\R)$ and $(1+\ep L)^{-1}\C_{\frac{1}{M}}^*\subset\C_{\frac{1}{M}}^*$, we have for every $g\in\C_{\frac{1}{M}}^*$,
		\[
		\int_\R((1+\ep L)^{-1}f)g\,dx=\int_\R f((1+\ep L)^{-1}g)\,dx\le\|\psi f\|_{L^1}\|\psi^{-1}((1+\ep L)^{-1}g)\|_{L^\infty}\le1.
		\]
		Combining the above inequality with the above representation, we obtain $(1+\ep L)^{-1}f\in\C_M$.
		This together with the closedness of $\C_M$ in $L^2(\R)$ and the so-called Post--Widder inversion formula (see e.g. \cite{EN2000})
		\[
		\lim_{n\to\infty}\left(1+\frac{t}{n}L\right)^{-n}f=e^{-tL}f\quad\text{in}\ L^2(\R)
		\]
		implies that $e^{-tL}f\in\C_M$.
		The proof is complete.\par\noindent
		\textbf{(ii)} Combining the duality argument, the self-adjointness of $e^{-tL}$ in $L^2(\R)$ and \textbf{(i)}, we have
		\[
		\begin{split}
			\|\psi^{-1}(e^{-tL}f)\|_{L^\infty}&=\sup_{\substack{\varphi\in C_0^\infty(\R)\\ \|\varphi\|_{L^1}\le1}}\int_\R\psi^{-1}(e^{-tL}f)\varphi\,dx\\
			&=\sup_{\substack{\varphi\in C_0^\infty(\R)\\ \|\varphi\|_{L^1}\le1}}\int_\R fe^{-tL}[\psi^{-1}\varphi]\,dx\\
			&\le\|\psi^{-1}f\|_{L^\infty}\sup_{\substack{\varphi\in C_0^\infty(\R)\\ \|\varphi\|_{L^1}\le1}}\|\psi e^{-tL}[\psi^{-1}\varphi]\|_{L^1}\\
			&\le\|\psi^{-1}f\|_{L^\infty}.
		\end{split}
		\]
		The proof is complete.\par\noindent
		\textbf{(iii)} Similarly to the proof of \textbf{(i)}, the proof is complete.\qedhere
\end{proof}

In the case $\alpha>\frac{1}{2}$, the following Hardy-type inequality holds (note that the Hardy inequality does not hold in the one-dimensional case).

\begin{lemma}
	\label{Hardy}
	Assume that $\alpha>\frac{1}{2}$.
	Then the following statements hold:
	\begin{enumerate}
	\item There exists a positive constant $C_{\textup{H},1}$ such that for every $f\in H^1(\R)$,
	\begin{equation}
		\|\lr{\cdot}^{-1}f\|_{L^2}\le C_{\textup{H},1}\|L^\frac{1}{2}f\|_{L^2};
	\end{equation}
	\item There exists a positive constant $C_{\textup{H},2}$ such that for every $f\in H^1(\R)$,
	\begin{equation}
		\|\lr{\cdot}^{-\frac{1}{2}}f\|_{L^\infty}\le C_{\textup{H},2}\|L^\frac{1}{2}f\|_{L^2}.
	\end{equation}
	\end{enumerate}
\end{lemma}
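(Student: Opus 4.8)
The plan is to reduce both estimates to classical one-dimensional weighted Hardy inequalities via the ground-state (harmonic) transform $f=\psi g$, exactly as in the translation of \eqref{main eq var} into \eqref{double parabolic}. The key point is that $\psi$ is a positive (non-$L^2$) solution of $-\psi^{\prime\prime}+V\psi=0$, which is immediate from \eqref{eq of psi}; consequently, for $f\in H^1(\R)$ and $g=\psi^{-1}f$ one has the identity
\[
\|L^\frac12 f\|_{L^2}^2=\int_\R\bigl(|f^\prime|^2+V|f|^2\bigr)\,dx=\int_\R\psi^2|g^\prime|^2\,dx .
\]
To justify it I would first recall that, since $V\in BC(\R)$, the form domain of $L$ is $H^1(\R)$ with $\|L^\frac12 f\|_{L^2}^2=\int_\R(|f^\prime|^2+V|f|^2)\,dx$, then verify $g=\psi^{-1}f\in H^1(\R)$ whenever $f\in H^1(\R)$ — here one uses $\psi\ge\psi_1>0$ and $|\psi^\prime/\psi|\le C\lr{\cdot}^{-1}$, both consequences of the Assumption together with \eqref{prop2 of psi} — and finally obtain the identity by writing $\psi g^\prime=f^\prime-(\psi^\prime/\psi)f$, integrating $|\psi g^\prime|^2$ over $[-R,R]$, using $(\psi^\prime/\psi)^\prime=V-(\psi^\prime/\psi)^2$, and letting $R\to\infty$; the boundary terms vanish because $f\in H^1(\R)\hookrightarrow C_0(\R)$ and $\psi^\prime/\psi$ is bounded.

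Granting this identity and the two-sided bound $\psi_1\lr{x}^\alpha\le\psi(x)\le\psi_2\lr{x}^\alpha$ recorded after the Assumption, statement \textbf{(i)} reduces to
\[
\int_\R\lr{x}^{2\alpha-2}|g|^2\,dx\le C\int_\R\lr{x}^{2\alpha}|g^\prime|^2\,dx,\qquad g\in H^1(\R),
\]
and statement \textbf{(ii)} to $\sup_{x\in\R}\lr{x}^{\alpha-\frac12}|g(x)|\le C\bigl(\int_\R\lr{x}^{2\alpha}|g^\prime|^2\,dx\bigr)^{1/2}$. Both are handled by splitting $\R$ into $\{|x|\le1\}$ and $\{|x|\ge1\}$. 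On $\{|x|\ge1\}$ I would represent $g(x)$ as an integral of $g^\prime$ over $(x,+\infty)$ or $(-\infty,x)$ according to the sign of $x$ (valid since $g\in H^1(\R)$ vanishes at infinity) and apply the Cauchy--Schwarz inequality with the weight $t^{-2\alpha}$, which is integrable near infinity precisely because $\alpha>\frac12$; this gives $|g(x)|^2\le(2\alpha-1)^{-1}|x|^{1-2\alpha}\int_{|x|}^\infty t^{2\alpha}|g^\prime|^2\,dt$, from which \textbf{(ii)} on $\{|x|\ge1\}$ is immediate (one also uses $\alpha-\frac12>0$ so that $\lr{x}^{\alpha-\frac12}\le C|x|^{\alpha-\frac12}$ there). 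On $\{|x|\le1\}$ the weights $\lr{x}^{2\alpha-2}$ and $\lr{x}^{\alpha-\frac12}$ are bounded, and the same representation together with Cauchy--Schwarz (splitting the defining integral of $g(x)$ at $\pm1$, the outer piece again using $t^{-2\alpha}\in L^1$ near infinity) yields $\sup_{|x|\le1}|g(x)|\le C\bigl(\int_\R\lr{x}^{2\alpha}|g^\prime|^2\,dx\bigr)^{1/2}$, which covers both statements there.

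For the $\{|x|\ge1\}$ part of \textbf{(i)} the crude pointwise bound above loses a logarithmic factor upon integration, so there I would use the sharp Hardy inequality directly: integrate $\int_1^R x^{2\alpha-2}|g|^2\,dx$ by parts against $(2\alpha-1)^{-1}(x^{2\alpha-1})^\prime$, discard the boundary contribution at $x=1$ (which has the favourable sign since $2\alpha>1$), let $R\to\infty$ along a sequence on which $R^{2\alpha-1}g(R)^2\to0$ (such a sequence exists because $\int_1^\infty x^{2\alpha-2}|g|^2\,dx<\infty$, which holds since $\lr{\cdot}^{-2}\le1$ and $f\in L^2(\R)$), and absorb $\bigl(\int_1^\infty x^{2\alpha-2}|g|^2\,dx\bigr)^{1/2}$ from the resulting Cauchy--Schwarz estimate, obtaining $\int_1^\infty x^{2\alpha-2}|g|^2\,dx\le\frac{4}{(2\alpha-1)^2}\int_1^\infty x^{2\alpha}|g^\prime|^2\,dx$. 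Putting the pieces together with the elementary comparisons $|x|^{2\alpha}\le\lr{x}^{2\alpha}$, $\lr{x}^{2\alpha-2}\le C|x|^{2\alpha-2}$ on $\{|x|\ge1\}$, and $\lr{x}^\alpha\le\psi_1^{-1}\psi$, and feeding everything back through the identity $\|L^\frac12 f\|_{L^2}^2=\int_\R\psi^2|g^\prime|^2\,dx$, gives \textbf{(i)} and \textbf{(ii)}. I expect the only genuine subtlety to be the bookkeeping near the origin — one really must separate off $\{|x|\le1\}$, since pure power-weighted Hardy is unavailable on all of $\R$ — together with the harmless boundary-term and density technicalities in the ground-state identity; the hypothesis $\alpha>\frac12$ enters exactly through the integrability of $t^{-2\alpha}$ at infinity (and of $\lr{t}^{2\alpha-2}$ at the origin), which is the analytic signature of subcriticality in the sense of Remark \ref{criticality theory}.
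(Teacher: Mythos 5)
Your proposal is correct, and its first half — the ground-state transform $f=\psi g$, the identity $\|L^{\frac12}f\|_{L^2}^2=\int_\R\psi^2|g^\prime|^2\,dx$, and the reduction of both statements to weighted inequalities for $g$ with weight $\lr{x}^\alpha$ — is exactly what the paper does (the paper writes $f_*=\psi^{-1}f$ and works with $f\in C_0^1(\R)$ by density, which lets it wave away the boundary-term and domain issues you handle explicitly). Where you diverge is in how the two reduced inequalities are established. For the weighted Hardy bound $\|\lr{\cdot}^{\alpha-1}g\|_{L^2}\le C\|\lr{\cdot}^{\alpha}g^\prime\|_{L^2}$ you split $\R$ at $|x|=1$ and invoke the classical power-weight Hardy inequality on $\{|x|\ge1\}$ plus a separate sup bound near the origin; the paper instead proves it globally in one integration by parts against the derivative of $x\lr{x}^{2\alpha-2}$, using the pointwise bound $\lr{x}^{2\alpha-2}\le\max\{1,\tfrac1{2\alpha-1}\}\,(x\lr{x}^{2\alpha-2})^\prime$ (valid precisely when $\alpha>\tfrac12$), then Cauchy--Schwarz and absorption. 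The paper's version avoids the splitting and the boundary/liminf bookkeeping at $x=1$ and $x=R$ entirely, at the cost of the slightly less transparent weight identity. For the $L^\infty$ bound, you use the direct representation $|g(x)|^2\le\bigl(\int_{|x|}^\infty t^{-2\alpha}\,dt\bigr)\int_{|x|}^\infty t^{2\alpha}|g^\prime|^2\,dt$, whereas the paper writes $|g(x)|^2\le2\min\bigl\{\int_x^\infty|gg^\prime|,\int_{-\infty}^x|gg^\prime|\bigr\}$ and then applies Cauchy--Schwarz together with part \textbf{(i)}; your route is self-contained, the paper's reuses \textbf{(i)} and is the same device it recycles for the Nash-type inequality in Lemma \ref{weighted Nash}. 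Both executions are sound; the hypothesis $\alpha>\tfrac12$ enters in the same place in each.
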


\begin{proof}
	By the density argument, we only need to consider the case $f\in C_0^1(\R)$.
	We set $f_*=\psi^{-1}f\in C_0^1(\R)$, then it is easy to see that $\|\lr{\cdot}^{-1}f\|_{L^2}\le C\|\lr{\cdot}^{\alpha-1}f_*\|_{L^2}$, $\|\lr{\cdot}^{-\frac{1}{2}}f\|_{L^\infty}\le C\|\lr{\cdot}^{\alpha-\frac{1}{2}}f_*\|_{L^\infty}$ and $\|\lr{\cdot}^\alpha f_*^\prime\|_{L^2}\le C\|L^\frac{1}{2}f\|_{L^2}$.
	Therefore it suffices to prove the following two inequalities:
	\begin{gather*}
	\|\lr{\cdot}^{\alpha-1}f_*\|_{L^2}\le C\|\lr{\cdot}^\alpha f_*^\prime\|_{L^2};\\
	\|\lr{\cdot}^{\alpha-\frac{1}{2}}f_*\|_{L^\infty}\le C\|\lr{\cdot}^\alpha f_*^\prime\|_{L^2}.
	\end{gather*}
	\textbf{(i)} It follows from integration by parts and the Schwarz inequality that
	\[
	\begin{split}
		\|\lr{\cdot}^{\alpha-1}f_*\|_{L^2}^2&\le C_{\text{H},1}^\prime\int_\R(x\lr{x}^{2\alpha-2})^\prime|f_*|^2\,dx\\
		&=-2C_{\text{H},1}^\prime\int_\R x\lr{x}^{2\alpha-2}f_*f_*^\prime\,dx\\
		&\le2C_{\text{H},1}^\prime\|\lr{\cdot}^{\alpha-1}f_*\|_{L^2}\|\lr{\cdot}^\alpha f_*^\prime\|_{L^2},
	\end{split}
	\]
	where $C_{\text{H},1}^\prime=\max\left\{1,\frac{1}{2\alpha-1}\right\}$.
	The proof is complete.\par\noindent
	\textbf{(ii)} It is easy to see that for every $x\in\R$,
		\[
		|f_*(x)|^2\le2\min\left\{\int_x^\infty|f_*f_*^\prime|\,dy,\int_{-\infty}^x|f_*f_*^\prime|\,dy\right\}.
		\]
		This together with the Schwarz inequality and \textbf{(i)} implies that for every $x\in\R$,
		\[
		\begin{split}
			\lr{x}^{2\alpha-1}|f_*(x)|^2&\le
			\begin{cases}
				2\int_x^\infty\lr{y}^{2\alpha-1}|f_*f_*^\prime|\,dy,&\text{if}\ x\ge0,\\
				2\int_{-\infty}^x\lr{y}^{2\alpha-1}|f_*f_*^\prime|\,dy,&\text{if}\ x<0
			\end{cases}
			\\
			&\le2\|\lr{\cdot}^{\alpha-1}f_*\|_{L^2}\|\lr{\cdot}^\alpha f_*^\prime\|_{L^2}\\
			&\le C\|\lr{\cdot}^\alpha f_*^\prime\|_{L^2}^2.
		\end{split}
		\]
		The proof is complete.\qedhere
\end{proof}

In the case $\alpha\ge0$, the following weighted Nash-type inequality holds.

\begin{lemma}
	\label{weighted Nash}
	Assume that $\alpha\ge0$.
	Then the following statements hold:
	\begin{enumerate}
		\item There exists a positive constant $C_{\textup{N},1}^*$ such that for every $f\in H^1(\R)\cap L^{1,\alpha}$,
		\begin{equation}
			\|\psi^\frac{2}{3}f\|_{L^2}^6\le C_{\textup{N},1}^*\|\psi f\|_{L^1}^4\|L^\frac{1}{2}f\|_{L^2}^2;
		\end{equation}
		\item There exists a positive constant $C_{\textup{N},2}^*$ such that for every $f\in H^1(\R)\cap L^{1,\alpha}$,
		\begin{equation}
			\|\psi^\frac{1}{3}f\|_{L^\infty}^6\le C_{\textup{N},2}^*\|\psi f\|_{L^1}^2\|L^\frac{1}{2}f\|_{L^2}^4.
		\end{equation}
	\end{enumerate}
\end{lemma}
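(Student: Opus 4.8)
The plan is to establish \textbf{(ii)} first and then deduce \textbf{(i)} from it by an elementary H\"older inequality. Write $f_*=\psi^{-1}f$, $P=\|\psi f\|_{L^1}$ and $Q=\|L^{1/2}f\|_{L^2}$; if $P=0$ or $Q=0$ then $f\equiv0$, so assume $P,Q>0$. First I would record the ground-state reduction: since $V=\psi''/\psi$, integrating by parts gives $\|L^{1/2}f\|_{L^2}^2=\int_\R\psi^2|f_*'|^2\,dx$ (the boundary terms $\big[(\psi'/\psi)f^2\big]_{\pm\infty}$ vanish because $f\in H^1(\R)$ decays at infinity while $\psi'/\psi$ is bounded), and $\|\psi f\|_{L^1}=\int_\R\psi^2|f_*|\,dx$ is immediate. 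Since $\alpha\ge0$ forces $\psi\ge\psi_1$, it follows that $f_*\in H^1(\R)\cap L^1(\R)$ with $\|f_*'\|_{L^2}\le\psi_1^{-1}Q$ and $\|f_*\|_{L^1}\le\psi_1^{-2}P$, and the one-dimensional Nash and Agmon inequalities applied to $f_*$ give $\|f_*\|_{L^\infty}\le M_0:=C_0P^{1/3}Q^{2/3}$.

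The heart of the argument is a bootstrap that improves the spatial decay of $f_*$. Because $f_*$ vanishes at $\pm\infty$ and $f_*f_*'\in L^1(\R)$ (by Cauchy--Schwarz, $\int_\R|f_*f_*'|\le(\int_\R f_*^2\psi^{-2})^{1/2}(\int_\R\psi^2 f_*'^2)^{1/2}<\infty$), one has $f_*(x_0)^2=-2\int_{x_0}^\infty f_*f_*'\,dy$ for $x_0>0$, and the analogous identity on $(-\infty,x_0)$ for $x_0<0$. I would prove by induction that $|f_*(x)|\le M_n\lr{x}^{-\gamma_n}$ for all $x\in\R$, where
\[
\gamma_0=0,\qquad \gamma_{n+1}=\tfrac14\gamma_n+\alpha,\qquad M_{n+1}=C_1M_n^{1/4}P^{1/4}Q^{1/2}.
\]
The base case is the previous paragraph. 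For the step, split $f_*f_*'=(\psi^{-1}f_*)(\psi f_*')$, apply Cauchy--Schwarz, and in $\int_{x_0}^\infty f_*^2\psi^{-2}\,dy=\int_{x_0}^\infty(|f_*|\psi^2)\cdot|f_*|\cdot\psi^{-4}\,dy$ bound one factor $|f_*|$ by $M_n\lr{\cdot}^{-\gamma_n}$, use $\psi^{-4}\le\psi_1^{-4}\lr{\cdot}^{-4\alpha}$, and pull $\lr{y}^{-\gamma_n-4\alpha}\le\lr{x_0}^{-\gamma_n-4\alpha}$ outside, leaving $\int_{x_0}^\infty|f_*|\psi^2\,dy\le P$. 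Combined with $\int_{x_0}^\infty\psi^2 f_*'^2\,dy\le Q^2$ this yields $|f_*(x_0)|\le C_1M_n^{1/4}P^{1/4}Q^{1/2}\lr{x_0}^{-\gamma_n/4-\alpha}$, which is the claim at step $n+1$; the case $x_0<0$ is identical.

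Finally, $\gamma_n=\tfrac{4\alpha}{3}(1-4^{-n})\nearrow\tfrac{4\alpha}{3}$, and the recursion for $M_n$ has fixed point $M_*:=C_1^{4/3}P^{1/3}Q^{2/3}$ with $M_n/M_*=(M_0/M_*)^{4^{-n}}\to1$, so letting $n\to\infty$ gives $|f_*(x)|\le M_*\lr{x}^{-4\alpha/3}$ for every $x$. Hence, using $\psi\le\psi_2\lr{\cdot}^\alpha$,
\[
\|\psi^{1/3}f\|_{L^\infty}=\|\psi^{4/3}f_*\|_{L^\infty}\le\psi_2^{4/3}M_*\sup_x\lr{x}^{4\alpha/3-4\alpha/3}=\psi_2^{4/3}C_1^{4/3}P^{1/3}Q^{2/3},
\]
which is \textbf{(ii)}. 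For \textbf{(i)}, H\"older gives $\|\psi^{2/3}f\|_{L^2}^2=\int_\R(\psi^{1/3}|f|)(\psi|f|)\,dx\le\|\psi^{1/3}f\|_{L^\infty}\|\psi f\|_{L^1}$, and cubing and inserting \textbf{(ii)} yields $\|\psi^{2/3}f\|_{L^2}^6\le\|\psi^{1/3}f\|_{L^\infty}^3\|\psi f\|_{L^1}^3\le C\|\psi f\|_{L^1}^4\|L^{1/2}f\|_{L^2}^2$.

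The only genuinely delicate point is the iteration: a \emph{single} use of Cauchy--Schwarz only produces decay $|f_*(x)|\lesssim\lr{x}^{-\alpha}$, which does not beat the growing weight $\psi^{4/3}\sim\lr{x}^{4\alpha/3}$ (and indeed the power $\tfrac13$ in the statement is exactly the threshold at which the fixed point $\tfrac43\alpha$ of the recursion matches the weight). Everything else — vanishing of the boundary terms in the ground-state representation, integrability of $f_*f_*'$, and convergence of $\{M_n\}$ — is routine and follows from $f\in H^1(\R)\cap L^{1,\alpha}$ together with $\psi_1\lr{\cdot}^\alpha\le\psi\le\psi_2\lr{\cdot}^\alpha$ and $\psi\ge\psi_1$.
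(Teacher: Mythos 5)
Your proof is correct, but it runs along a genuinely different track from the paper's. Both arguments rest on the same two ingredients: the ground-state reduction to $f_*=\psi^{-1}f$ with $\|L^{1/2}f\|_{L^2}^2=\int_\R\psi^2|f_*'|^2\,dx$, and the one-dimensional identity $f_*(x_0)^2=-2\int_{x_0}^\infty f_*f_*'\,dy$ combined with Cauchy--Schwarz and the monotonicity of the weight on each half-line. The difference is in how the self-improvement is closed. The paper splits $|f_*f_*'|=(\lr{\cdot}^{\frac{5}{3}\alpha}|f_*|)(\lr{\cdot}^{\alpha}|f_*'|)$ to get a weighted Agmon-type bound $\|\lr{\cdot}^{\frac{4}{3}\alpha}f_*\|_{L^\infty}^2\le 2\|\lr{\cdot}^{\frac{5}{3}\alpha}f_*\|_{L^2}\|\lr{\cdot}^{\alpha}f_*'\|_{L^2}$, interpolates $\|\lr{\cdot}^{\frac{5}{3}\alpha}f_*\|_{L^2}^2\le\|\lr{\cdot}^{\frac{4}{3}\alpha}f_*\|_{L^\infty}\|\lr{\cdot}^{2\alpha}f_*\|_{L^1}$, and absorbs in two lines (proving \textbf{(i)} first, then \textbf{(ii)}). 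You instead split $|f_*f_*'|=(\psi^{-1}f_*)(\psi f_*')$, feed the current pointwise bound back in, and iterate to the fixed point $\gamma_*=\frac{4\alpha}{3}$, $M_*=C_1^{4/3}\|\psi f\|_{L^1}^{1/3}\|L^{1/2}f\|_{L^2}^{2/3}$ (proving \textbf{(ii)} first, then \textbf{(i)} by H\"older). Your iteration is, in effect, the unrolled version of the paper's one-step absorption: the fixed point of your recursion is exactly the exponent at which the paper's inequality closes. What the paper's route buys is brevity and no limiting argument; what yours buys is a transparent explanation of \emph{why} the exponent $\frac{4\alpha}{3}$ (equivalently the weight $\psi^{1/3}$) is the natural threshold, and an explicit intermediate statement (the pointwise decay $|f_*(x)|\le M_*\lr{x}^{-4\alpha/3}$) that is slightly stronger than the $L^\infty$ bound as stated. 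All the supporting details you flag as routine --- vanishing of boundary terms, integrability of $f_*f_*'$, convergence of $M_n$ and $\gamma_n$, and the degenerate cases $P=0$ or $Q=0$ --- check out under the standing hypotheses $\alpha\ge0$ and $\psi_1\lr{\cdot}^{\alpha}\le\psi\le\psi_2\lr{\cdot}^{\alpha}$.
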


\begin{proof}
	By the density argument, we only need to consider the case $f\in C_0^1(\R)$.
	We set $f_*=\psi^{-1}f\in C_0^1(\R)$, then it is easy to see that $\|\psi^\frac{2}{3}f\|_{L^2}\le C\|\lr{\cdot}^{\frac{5}{3}\alpha}f_*\|_{L^2}$, $\|\psi^\frac{1}{3}f\|_{L^\infty}\le C\|\lr{\cdot}^{\frac{4}{3}\alpha}f_*\|_{L^\infty}$, $\|\lr{\cdot}^{2\alpha}f_*\|_{L^1}\le C\|\psi f\|_{L^1}$ and $\|\lr{\cdot}^\alpha f_*^\prime\|_{L^2}\le C\|L^\frac{1}{2}f\|_{L^2}$.
	Therefore it suffices to prove the following two inequalities:
	\begin{gather*}
		\|\lr{\cdot}^{\frac{5}{3}\alpha}f_*\|_{L^2}^6\le C\|\lr{\cdot}^{2\alpha}f_*\|_{L^1}^4\|\lr{\cdot}^\alpha f_*^\prime\|_{L^2}^2;\\
		\|\lr{\cdot}^{\frac{4}{3}\alpha}f_*\|_{L^\infty}^6\le C\|\lr{\cdot}^{2\alpha}f_*\|_{L^1}^2\|\lr{\cdot}^\alpha f_*^\prime\|_{L^2}^4.
	\end{gather*}
		\textbf{(i)} It is easy to see that for every $x\in\R$,
		\[
		|f_*(x)|^2\le2\min\left\{\int_x^\infty|f_*f_*^\prime|\,dy,\int_{-\infty}^x|f_*f_*^\prime|\,dy\right\}.
		\]
		This together with the Schwarz inequality implies that for every $x\in\R$,
		\[
		\begin{split}
			\lr{x}^{\frac{8}{3}\alpha}|f_*(x)|^2&\le
			\begin{cases}
				2\int_x^\infty\lr{y}^{\frac{8}{3}\alpha}|f_*f_*^\prime|\,dy,&\text{if}\ x\ge0,\\
				2\int_{-\infty}^x\lr{y}^{\frac{8}{3}\alpha}|f_*f_*^\prime|\,dy,&\text{if}\ x<0
			\end{cases}
			\\
			&\le2\|\lr{\cdot}^{\frac{5}{3}\alpha}f_*\|_{L^2}\|\lr{\cdot}^\alpha f_*^\prime\|_{L^2}.
		\end{split}
		\]
		Therefore we have
		\[
		\|\lr{\cdot}^{\frac{5}{3}\alpha}f_*\|_{L^2}^8\le\|\lr{\cdot}^{\frac{4}{3}\alpha}f_*\|_{L^\infty}^4\|\lr{\cdot}^{2\alpha}f_*\|_{L^1}^4\le4\|\lr{\cdot}^{\frac{5}{3}\alpha}f_*\|_{L^2}^2\|\lr{\cdot}^\alpha f_*^\prime\|_{L^2}^2\|\lr{\cdot}^{2\alpha}f_*\|_{L^1}^4.
		\]
		The proof is complete.\par\noindent
		\textbf{(ii)} Similarly to the proof of \textbf{(i)}, we have
		\[
		\|\lr{\cdot}^{\frac{4}{3}\alpha}f_*\|_{L^\infty}^2\le2\|\lr{\cdot}^{\frac{5}{3}\alpha}f_*\|_{L^2}\|\lr{\cdot}^\alpha f_*^\prime\|_{L^2}\le C\|\lr{\cdot}^{2\alpha}f_*\|_{L^1}^\frac{2}{3}\|\lr{\cdot}^\alpha f_*^\prime\|_{L^2}^\frac{4}{3}.
		\]
		The proof is complete.\qedhere
\end{proof}

Combining Lemma \ref{Hardy} \textbf{(i)} and Lemma \ref{weighted Nash}, we deduce the following Nash-type inequality.

\begin{lemma}
	\label{Nash}
	Assume that $\alpha\ge0$.
	Then there exists a positive constant $C_\textup{N}$ such that for every $f\in H^1(\R)\cap L^{1,\alpha}$,
	\begin{equation}
		\|f\|_{L^2}^{2+\frac{4}{1+2\alpha}}\le C_\textup{N}\|\psi f\|_{L^1}^\frac{4}{1+2\alpha}\|L^\frac{1}{2}f\|_{L^2}^2.
	\end{equation}
\end{lemma}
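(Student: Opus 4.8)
The plan is to distinguish the ranges $\alpha>\frac12$, where Lemma~\ref{Hardy} is available, and $0\le\alpha\le\frac12$, where it is not. By the density argument it suffices to prove the inequality for $f\in C_0^1(\R)$, and I will freely use the two-sided bound $\psi_1\lr{x}^\alpha\le\psi(x)\le\psi_2\lr{x}^\alpha$ to interchange $\psi$-weights and $\lr{\cdot}$-weights.

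For $\alpha>\frac12$ I would interpolate directly between the two lemmas. Set $\theta=\frac{2\alpha}{3+2\alpha}\in(0,1)$; since the powers of $\lr{x}$ cancel for this choice,
\[
|f|^2=\bigl(\lr{x}^{-2}|f|^2\bigr)^{\theta}\bigl(\lr{x}^{\frac43\alpha}|f|^2\bigr)^{1-\theta},
\]
so H\"{o}lder's inequality gives $\|f\|_{L^2}^2\le\|\lr{\cdot}^{-1}f\|_{L^2}^{2\theta}\,\|\lr{\cdot}^{\frac23\alpha}f\|_{L^2}^{2(1-\theta)}$. Estimating the first factor by Lemma~\ref{Hardy}~\textbf{(i)} (hence, up to a constant, by $\|L^{1/2}f\|_{L^2}^{2\theta}$) and the second by Lemma~\ref{weighted Nash}~\textbf{(i)} (using $\|\lr{\cdot}^{\frac23\alpha}f\|_{L^2}\asymp\|\psi^{2/3}f\|_{L^2}$, hence by $(\|\psi f\|_{L^1}^4\|L^{1/2}f\|_{L^2}^2)^{(1-\theta)/3}$), and multiplying out, the exponents of $\|L^{1/2}f\|_{L^2}$ add up to $\frac{2(1+2\alpha)}{3+2\alpha}$ and those of $\|\psi f\|_{L^1}$ to $\frac{4}{3+2\alpha}$. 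Raising the resulting estimate to the power $\frac{3+2\alpha}{1+2\alpha}$ and noting $\frac{2(3+2\alpha)}{1+2\alpha}=2+\frac{4}{1+2\alpha}$ gives the claim.

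For $0\le\alpha\le\frac12$ Lemma~\ref{Hardy} is no longer at hand, and I would instead use a spatial cut-off tuned to $f$. Put $A=\|\psi f\|_{L^1}$, $B=\|L^{1/2}f\|_{L^2}$, and split $\R$ into $\{|x|\le R\}$ and $\{|x|>R\}$ with $R\asymp (A/B)^{2/(3+2\alpha)}$. On the outer set, $\lr{x}^{-4\alpha/3}\le\lr{R}^{-4\alpha/3}$, so
\[
\int_{|x|>R}|f|^2\,dx\le\lr{R}^{-4\alpha/3}\int_\R\lr{x}^{4\alpha/3}|f|^2\,dx\le C\lr{R}^{-4\alpha/3}A^{4/3}B^{2/3}
\]
by Lemma~\ref{weighted Nash}~\textbf{(i)} (again $\int_\R\lr{x}^{4\alpha/3}|f|^2\asymp\|\psi^{2/3}f\|_{L^2}^2$). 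On the inner set, passing to the harmonic transform $f_*=\psi^{-1}f$, using the identity $\|L^{1/2}f\|_{L^2}^2=\int_\R\psi^2|f_*'|^2\,dx$, the weighted Poincar\'e inequality on $(-R,R)$ for the measure $d\mu=\psi^2\,dx$ — with constant $O(R^2)$, uniformly in $R$ — and $|\overline{f_*}|\le A/\mu((-R,R))$ for the $\mu$-mean $\overline{f_*}$, one obtains
\[
\int_{|x|\le R}|f|^2\,dx=\int_{-R}^R\psi^2|f_*|^2\,dx\le C\Bigl(R^2B^2+\tfrac{A^2}{\mu((-R,R))}\Bigr),
\]
where $\mu((-R,R))\asymp R^{1+2\alpha}$ for $R\ge1$ (and $\asymp R$ for $R\le1$). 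Substituting the chosen $R$, each of these three terms is a constant multiple of $A^{4/(3+2\alpha)}B^{2(1+2\alpha)/(3+2\alpha)}$, so $\|f\|_{L^2}^2\le CA^{4/(3+2\alpha)}B^{2(1+2\alpha)/(3+2\alpha)}$, and raising to the power $\frac{3+2\alpha}{1+2\alpha}$ as above finishes the proof; the sub-case $R<1$ is treated identically using $\mu((-R,R))\asymp R$. The crux, and the step I expect to be the most delicate, is precisely this: pinning down the right cut-off $R$ and, above all, establishing the weighted Poincar\'e inequality on $(-R,R)$ with an $O(R^2)$ constant uniformly in $R\ge1$ (a Muckenhoupt-type estimate, in which a spurious logarithm appears at the endpoint $\alpha=\frac12$ and has to be removed by a sharper comparison).
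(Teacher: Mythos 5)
Your treatment of the range $\alpha>\frac12$ is exactly the paper's: the same H\"{o}lder interpolation with $\theta=\frac{2\alpha}{3+2\alpha}$ between Lemma \ref{Hardy} \textbf{(i)} and Lemma \ref{weighted Nash} \textbf{(i)}, with correct exponent bookkeeping. For $0\le\alpha<\frac12$ your cut-off $R\asymp(A/B)^{2/(3+2\alpha)}$ and the outer estimate also coincide with the paper's; on the inner set you replace the paper's argument, which is
$\int_{|x|\le R}|f|^2\,dx\le\|\psi^{1/3}f\|_{L^\infty}^2\int_{|x|\le R}\psi^{-2/3}\,dx\le CR^{1-\frac23\alpha}A^{2/3}B^{4/3}$ via Lemma \ref{weighted Nash} \textbf{(ii)}, by a weighted Poincar\'e inequality plus a mean-value bound. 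That variant does close for $\alpha$ strictly below $\frac12$, since there $\mu((-R,R))\cdot\int_{-R}^{R}\lr{s}^{-2\alpha}\,ds\asymp R^{1+2\alpha}\cdot R^{1-2\alpha}=R^2$ yields the Poincar\'e constant you need.

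The gap is at the endpoint $\alpha=\frac12$, which the lemma (and its use in case \textbf{(\II)}) must cover. There the logarithm you mention is not spurious: for the weight $\lr{x}$ on $(-R,R)$ the optimal Poincar\'e constant is of order $R^2\log R$. Concretely, the odd Lipschitz function $g(x)=\mathrm{sgn}(x)\min\{2\log\lr{x}/\log\lr{R},\,1\}$ has weighted mean zero, satisfies $\int_{-R}^{R}\lr{x}|g'|^2\,dx\le C/\log R$, yet $\int_{-R}^{R}\lr{x}|g|^2\,dx\ge cR^2$; so no $O(R^2)$ constant exists, and the loss propagates: with $C_P(R)\asymp R^2\log R$ no choice of $R$ balances your three terms to give $\|f\|_{L^2}^2\le CAB$ (the best attainable is $CAB(\log(e+A/B))^{1/2}$). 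Hence the ``sharper comparison'' you defer to cannot be a sharper Poincar\'e inequality; the step has to be replaced. The paper's replacement is the $L^\infty$ bound: Lemma \ref{weighted Nash} \textbf{(ii)} gives $\|\psi^{1/3}f\|_{L^\infty}^2\le CA^{2/3}B^{4/3}$, hence $\int_{|x|\le R}|f|^2\,dx\le CR^{2/3}A^{2/3}B^{4/3}$ at $\alpha=\frac12$, which carries no logarithm; by Young's inequality $R^{2/3}A^{2/3}B^{4/3}\le C(R^2B^2+A^2R^{-2})$, so this is precisely the combined inner bound you wanted, obtained without any Poincar\'e inequality.
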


\begin{proof}
	First, we consider the case $\alpha>\frac{1}{2}$.
	Combining Lemma \ref{Hardy} \textbf{(i)} and Lemma \ref{weighted Nash} \textbf{(i)}, we have
	\[
	\begin{split}
		\|f\|_{L^2}&\le C\|\lr{\cdot}^{-1}f\|_{L^2}^\frac{2\alpha}{3+2\alpha}\|\psi^\frac{2}{3}f\|_{L^2}^\frac{3}{3+2\alpha}\\
		&\le C\|\psi f\|_{L^1}^\frac{2}{3+2\alpha}\|L^\frac{1}{2}f\|_{L^2}^\frac{1+2\alpha}{3+2\alpha}.
	\end{split}
	\]
	
	Finally, we consider the case $0\le\alpha\le\frac{1}{2}$.
	We may assume that $\|\psi f\|_{L^1},\|L^\frac{1}{2}f\|_{L^2}\neq0$.
	Combining Lemma \ref{weighted Nash} \textbf{(i)} and Lemma \ref{weighted Nash} \textbf{(ii)}, we have for every $R>0$,
	\[
	\begin{split}
		\|f\|_{L^2}^2&=\int_{|x|\le R}\psi^{-\frac{2}{3}}\psi^\frac{2}{3}|f|^2\,dx+\int_{|x|\ge R}\psi^{-\frac{4}{3}}\psi^\frac{4}{3}|f|^2\,dx\\
		&\le CR^{1-\frac{2}{3}\alpha}\|\psi f\|_{L^1}^\frac{2}{3}\|L^\frac{1}{2}f\|_{L^2}^\frac{4}{3}+CR^{-\frac{4}{3}\alpha}\|\psi f\|_{L^1}^\frac{4}{3}\|L^\frac{1}{2}f\|_{L^2}^\frac{2}{3}.
	\end{split}
	\]
	Substituting $R=\left(\frac{\|\psi f\|_{L^1}}{\|L^\frac{1}{2}f\|_{L^2}}\right)^{\frac{2}{3+2\alpha}}$ into the above inequality, we have the desired result.
\end{proof}

Combining Lemma \ref{contr and posi} \textbf{(i)} and Lemma \ref{Nash}, we deduce the following decay estimate for $\{e^{-tL}\}_{t\ge0}$ (see e.g. \cite{O2005}).

\begin{lemma}
	\label{decay}
	Assume that $\alpha\ge0$.
	Then the following statements hold:
	\begin{enumerate}
		\item For every $1\le q_1\le q_2\le\infty$, there exists a positive constant $C_{q_1,q_2}$ such that if $f\in L^{q_1,\alpha\left(\frac{2}{q_1}-1\right)}$, then one has for every $t>0$,
		\begin{equation}
			\|\psi^{\frac{2}{q_2}-1}(e^{-tL}f)\|_{L^{q_2}}\le C_{q_1,q_2}t^{-\frac{1+2\alpha}{2}\left(\frac{1}{q_1}-\frac{1}{q_2}\right)}\|\psi^{\frac{2}{q_1}-1}f\|_{L^{q_1}};
		\end{equation}
		\item For every $q\in[1,2]$, there exists a positive constant $C_q$ such that if $f\in L^{q,\alpha\left(\frac{2}{q}-1\right)}$, then one has for every $t>0$,
		\begin{equation}
			\|L^\frac{1}{2}e^{-tL}f\|_{L^2}\le C_qt^{-\frac{1+2\alpha}{2}\left(\frac{1}{q}-\frac{1}{2}\right)-\frac{1}{2}}\|\psi^{\frac{2}{q}-1}f\|_{L^q}.
		\end{equation}
	\end{enumerate}
\end{lemma}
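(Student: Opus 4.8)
The plan is to run Nash's classical iteration — Nash inequality plus $L^1$-contractivity give $L^1$–$L^2$ smoothing, duality gives $L^2$–$L^\infty$ smoothing, the semigroup property gives $L^1$–$L^\infty$ smoothing, and interpolation fills in the full $L^{q_1}$–$L^{q_2}$ range — but carried out in the $\psi$-weighted norms. It is perhaps cleanest conceptually to transport everything to the weighted space $L^2(\R,\psi^2\,dx)$: via the harmonic transform $g\mapsto\psi g$ the semigroup $\{e^{-tL}\}$ is unitarily equivalent to $\{e^{-t\mathcal L}\}$ with $\mathcal Lg=-\psi^{-2}\pa_x(\psi^2\pa_xg)$, under which Lemma~\ref{contr and posi} becomes contractivity of $e^{-t\mathcal L}$ on $L^1(\psi^2dx)$, on $L^2(\psi^2dx)$ and on $L^\infty(dx)$, while Lemma~\ref{Nash} becomes the Nash inequality on $(\R,\psi^2\,dx)$ with ``dimension'' $d=1+2\alpha$; the claimed bounds are then the standard heat-semigroup smoothing estimates with exponent $\frac d2(\tfrac1{q_1}-\tfrac1{q_2})$ (cf.\ \cite{O2005}). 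I will nevertheless phrase the argument directly in terms of $e^{-tL}$ and the weights $\psi^{2/q-1}$.

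Step 1 (core estimate). Fix $f\in L^{1,\alpha}$, $f\ne0$, and set $\phi(t)=\|e^{-tL}f\|_{L^2}^2$. Since $L\ge0$ is self-adjoint and $\{e^{-tL}\}$ is analytic, $e^{-tL}f\in D(L)=H^2(\R)\subset H^1(\R)$ for $t>0$, $\phi\in C^1((0,\infty))$ and $\phi'(t)=-2\|L^{1/2}e^{-tL}f\|_{L^2}^2$. By Lemma~\ref{contr and posi}~\textbf{(i)}, $\|\psi e^{-tL}f\|_{L^1}\le\|\psi f\|_{L^1}=:M<\infty$, hence $e^{-tL}f\in H^1(\R)\cap L^{1,\alpha}$, so Lemma~\ref{Nash} gives, with $\gamma:=\frac2{1+2\alpha}$,
\[
\phi(t)^{1+\gamma}\le C_{\textup N}M^{2\gamma}\bigl(-\tfrac12\phi'(t)\bigr),\qquad\text{i.e.}\qquad\frac{d}{dt}\phi^{-\gamma}\ge\frac{2\gamma}{C_{\textup N}}M^{-2\gamma}.
\]
Integrating over $(0,t)$ and discarding $\phi(0)^{-\gamma}>0$ yields $\phi(t)\le CM^2t^{-1/\gamma}$, that is, $\|e^{-tL}f\|_{L^2}\le Ct^{-\frac{1+2\alpha}{4}}\|\psi f\|_{L^1}$ for $t>0$.

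Step 2 (duality, composition, interpolation). By self-adjointness of $e^{-tL}$ on $L^2(\R)$, for $\varphi\in C_0^\infty(\R)$ with $\|\varphi\|_{L^1}\le1$ one has $\int_\R\psi^{-1}(e^{-tL}f)\varphi\,dx=\int_\R f\,e^{-tL}[\psi^{-1}\varphi]\,dx$; since $\|\psi(\psi^{-1}\varphi)\|_{L^1}=\|\varphi\|_{L^1}\le1$, Step~1 applied to $\psi^{-1}\varphi$ and taking the supremum over such $\varphi$ give $\|\psi^{-1}e^{-tL}f\|_{L^\infty}\le Ct^{-\frac{1+2\alpha}{4}}\|f\|_{L^2}$ for $f\in L^2(\R)$. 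Writing $e^{-tL}=e^{-\frac t2L}e^{-\frac t2L}$ and chaining Step~1 with this bound yields $\|\psi^{-1}e^{-tL}f\|_{L^\infty}\le Ct^{-\frac{1+2\alpha}{2}}\|\psi f\|_{L^1}$. Interpolating (complex interpolation with a change of density, i.e.\ of Stein--Weiss type) among the three uniform-in-$t$ contractions of Lemma~\ref{contr and posi} together with the $L^2$-contractivity of $e^{-tL}$ and the three decay bounds just obtained — those controlling $\|e^{-tL}f\|_{L^2}$ by $\|\psi f\|_{L^1}$, $\|\psi^{-1}e^{-tL}f\|_{L^\infty}$ by $\|f\|_{L^2}$, and $\|\psi^{-1}e^{-tL}f\|_{L^\infty}$ by $\|\psi f\|_{L^1}$ — produces \textbf{(i)} for all $1\le q_1\le q_2\le\infty$. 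The one point requiring care here is that the interpolated weights come out exactly as $\psi^{2/q-1}\sim\lr{\cdot}^{\alpha(2/q-1)}$, which is precisely what matches the spaces $L^{q,\alpha(2/q-1)}$ and the exponent $\frac{1+2\alpha}{2}(\tfrac1{q_1}-\tfrac1{q_2})$ in the statement.

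Step 3 (assertion (ii)). Write $L^{1/2}e^{-tL}f=\bigl(L^{1/2}e^{-\frac t2L}\bigr)\bigl(e^{-\frac t2L}f\bigr)$. The spectral theorem gives $\|L^{1/2}e^{-\frac t2L}\|_{L^2\to L^2}\le\sup_{\lambda\ge0}\sqrt\lambda\,e^{-\lambda t/2}=Ct^{-1/2}$, while \textbf{(i)} with $(q_1,q_2)=(q,2)$ gives $\|e^{-\frac t2L}f\|_{L^2}\le C_qt^{-\frac{1+2\alpha}{2}(\frac1q-\frac12)}\|\psi^{\frac2q-1}f\|_{L^q}$; multiplying the two bounds gives \textbf{(ii)}. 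The main obstacle I anticipate is not any individual inequality but the weighted bookkeeping in Step~2 — checking that interpolation with change of measure returns exactly the weights $\psi^{2/q-1}$ and the stated time exponents — together with fixing at the outset the admissible class of $f$ (so that all norms in Step~1 are finite and $\phi\in C^1((0,\infty))$); the Nash differential inequality itself is entirely standard.
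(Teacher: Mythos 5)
Your proof is correct and is essentially the argument the paper intends: the paper gives no proof of Lemma \ref{decay}, stating only that it follows by combining Lemma \ref{contr and posi} \textbf{(i)} with Lemma \ref{Nash} and citing \cite{O2005}, and your Nash iteration (differential inequality for $\|e^{-tL}f\|_{L^2}^2$, duality, semigroup composition, weighted interpolation, and the spectral bound $\|L^{1/2}e^{-tL/2}\|_{L^2\to L^2}\le Ct^{-1/2}$ for part \textbf{(ii)}) is precisely that standard route carried out in the weighted norms $\psi^{2/q-1}$.
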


\subsection{Blow-up alternative and supersolution method}
In this subsection, we state the so-called blow-up alternative and the so-called supersolution method.
To this aim, we also use the symbol $\{e^{-tL}\}_{t\ge0}$ to denote the semigroup $\{T(t)\}_{t\ge0}$ on $BC(\R)$ with the quasi-contraction property defined through the heat kernel of $L$ (note that $\{T(t)\}_{t\ge0}$ coincides with $\{e^{-tL}\}_{t\ge0}$ on $L^2(\R)\cap BC(\R)$).
The following lemma describes the existence and the uniqueness of the local-in-time solution to \eqref{main eq} and the blow-up alternative (see e.g. \cite{QS2007}). 

\begin{lemma}
	\label{blow-up alt}
	There exist $T=T_\textup{max}(u_0)\in(0,\infty]$ and a unique local-in-time solution $u\in C^{2;1}(\R\times(0,T))\cap C(\R\times[0,T))$ to \eqref{main eq} with the following properties:
	\begin{enumerate}
		\item For every $t\in[0,T)$,
		\begin{equation}
			\label{bounded sol}
			u(t)\in BC(\R)
		\end{equation}
		and
		\begin{equation}
			\label{int eq}
			u(t)=e^{-tL}u_0+\int_0^te^{-(t-s)L}[\lr{\cdot}^{-m}u(s)^p]\,ds\quad\text{in}\ \R;
		\end{equation}
		\item If $T<\infty$, then $u$ satisfies
		\begin{equation}
			\label{blow-up}
			\lim_{t\uparrow T}\|u(t)\|_{L^\infty}=\infty.
		\end{equation}
	\end{enumerate}
\end{lemma}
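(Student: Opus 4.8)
The plan is to construct the solution by the contraction mapping principle applied to the Duhamel formulation \eqref{int eq}, and then to recover the regularity and the blow-up alternative by standard parabolic arguments. Since $V\in BC(\R)$, the Feynman--Kac formula gives for the heat kernel $k_L$ of $L$ the pointwise bound $0<k_L(t,x,y)\le e^{\omega t}(4\pi t)^{-1/2}e^{-|x-y|^2/(4t)}$ with $\omega=\|V_-\|_{L^\infty}$, so $\{e^{-tL}\}_{t\ge0}$ is positivity preserving on $BC(\R)$ and quasi-contractive there, $\|e^{-tL}f\|_{L^\infty}\le e^{\omega t}\|f\|_{L^\infty}$; moreover $e^{-tL}f\in C^\infty(\R)$ for $t>0$ with the usual Gaussian bounds on its derivatives, and $(x,t)\mapsto(e^{-tL}f)(x)$ extends continuously to $\R\times[0,\infty)$ with value $f(x_0)$ at $(x_0,0)$ whenever $f\in BC(\R)$ (a local consequence of the approximate-identity property together with the continuity of $f$). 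For $M>0$ and $\tau>0$ let $\C_{M,\tau}$ denote the set of $w\in L^\infty(\R\times(0,\tau))$ with $\sup_{0<t<\tau}\|w(t)\|_{L^\infty}\le M$, and define
\[
\Phi(w)(t)=e^{-tL}u_0+\int_0^te^{-(t-s)L}\bigl[\lr{\cdot}^{-m}(w(s)_+)^p\bigr]\,ds,
\]
the truncation $(\cdot)_+$ being inserted so that the nonlinearity is globally defined and Lipschitz on bounded sets of $L^\infty$ (it is removed a posteriori). Using $\lr{x}^{-m}\le1$ and $|(a_+)^p-(b_+)^p|\le pM^{p-1}|a-b|$ for $|a|,|b|\le M$, one gets $\sup_{0<t<\tau}\|\Phi(w)(t)\|_{L^\infty}\le e^{\omega\tau}(\|u_0\|_{L^\infty}+\tau M^p)$ and an analogous Lipschitz estimate for $\Phi(w)-\Phi(\tilde w)$ with constant $e^{\omega\tau}\tau pM^{p-1}$; choosing $M=2\|u_0\|_{L^\infty}+1$ and then $\tau=\tau(\|u_0\|_{L^\infty})>0$ small makes $\Phi$ a contraction of $\C_{M,\tau}$ into itself, and the Banach fixed point theorem yields a unique $u\in\C_{M,\tau}$ with $u=\Phi(u)$.

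Next I would upgrade this mild solution to a classical one. The map $(x,t)\mapsto(e^{-tL}u_0)(x)$ is continuous on $\R\times[0,\tau]$ by the above, and the Duhamel term is continuous on $\R\times[0,\tau]$ as well (its integrand is bounded and continuous in $x$) and vanishes at $t=0$; hence $u\in C(\R\times[0,\tau])$ with $u(\cdot,0)=u_0$. Then $g:=\lr{\cdot}^{-m}(u_+)^p\in C(\R\times[0,\tau])$ is bounded, so from the Duhamel formula and the smoothing of the heat semigroup $u$ is locally H\"older continuous on $\R\times(0,\tau)$; feeding this back, $g$ is locally H\"older, and interior Schauder estimates give $u\in C^{2;1}(\R\times(0,\tau))$ solving $\pa_tu-\pa_x^2u+Vu=g$ classically. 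Nonnegativity of $u$ follows from the comparison principle (or from positivity of $\{e^{-tL}\}$ and the monotone iteration $u^{(0)}\equiv0$, $u^{(k+1)}=\Phi(u^{(k)})$, which increases to $u$), whence $(u_+)^p=u^p$ and $u$ solves \eqref{main eq} together with \eqref{bounded sol} and \eqref{int eq}. For uniqueness in the class $C^{2;1}(\R\times(0,\tau'))\cap C(\R\times[0,\tau'))$ satisfying \eqref{bounded sol}: any such solution obeys \eqref{int eq} by the variation-of-constants identity $\frac{d}{ds}e^{-(t-s)L}u(s)=e^{-(t-s)L}g(s)$, hence coincides with the fixed point on the common (truncation-free) interval.

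Finally, a standard continuation argument produces a maximal existence time $T=T_\textup{max}(u_0)\in(0,\infty]$: since the length $\tau$ of the interval obtained above depends only on $\|u_0\|_{L^\infty}$, if $T<\infty$ and $\limsup_{t\uparrow T}\|u(t)\|_{L^\infty}=:M_0<\infty$, then restarting the construction at a time $t_0<T$ with $T-t_0<\tau(M_0+1)$ extends $u$ past $T$, contradicting maximality; hence \eqref{blow-up} holds. I expect the only genuinely delicate points to be: (a) the joint continuity $u\in C(\R\times[0,T))$ — because the $BC(\R)$-semigroup is not strongly continuous in the sup norm, this has to be argued pointwise/locally rather than through an $L^\infty$-valued continuity statement; and (b) the regularity bootstrap turning a bounded continuous mild solution into a $C^{2;1}$ classical one. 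The fixed-point estimate, the nonnegativity, and the continuation step are routine once the semigroup properties stated at the outset are in hand.
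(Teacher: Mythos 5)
The paper does not actually prove this lemma---it simply cites Quittner--Souplet---and your contraction-mapping/regularity-bootstrap/continuation argument is precisely the standard proof being referenced, with the genuinely delicate points (joint continuity up to $t=0$ despite the lack of strong continuity of the $BC(\R)$-semigroup in the sup norm, and the Schauder bootstrap from mild to classical) correctly identified. One small correction: as written your continuation step only yields $\limsup_{t\uparrow T}\|u(t)\|_{L^\infty}=\infty$; to obtain the full limit in \eqref{blow-up}, run the same contradiction under the hypothesis $\liminf_{t\uparrow T}\|u(t)\|_{L^\infty}<\infty$ and restart from a time $t_n$, taken from the bounded subsequence, sufficiently close to $T$.
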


The following lemma describes the supersolution method (see e.g. \cite{IK2020}).

\begin{lemma}
	\label{supersol method}
	Assume that there exists a positive function $\supu\in BC(\R\times(0,\infty))$ such that for every $t>0$,
	\[
	\supu(t)\ge e^{-tL}u_0+\int_0^te^{-(t-s)L}[\lr{\cdot}^{-m}\supu(s)^p]\,ds\quad\text{in}\ \R.
	\]
	Then \eqref{main eq} possesses a global-in-time solution.
\end{lemma}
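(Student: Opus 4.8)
The plan is to construct the desired solution by a monotone iteration scheme carried out on the whole interval $(0,\infty)$, exploiting the positivity- and order-preserving properties of the $BC$-semigroup $\{e^{-tL}\}_{t\ge0}$, and then to upgrade the resulting bounded mild solution to a classical one via interior parabolic regularity. A short alternative ending uses the blow-up alternative of Lemma \ref{blow-up alt}.

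Concretely, I would set $u_1(t)=e^{-tL}u_0$ and define inductively
\[
u_{n+1}(t)=e^{-tL}u_0+\int_0^te^{-(t-s)L}[\lr{\cdot}^{-m}u_n(s)^p]\,ds,\qquad t>0,\ n\in\N.
\]
Each $u_n(t)$ belongs to $BC(\R)$ because $\lr{\cdot}^{-m}u_n(s)^p\in BC(\R)$ uniformly in $s\in(0,t)$ and the $BC$-semigroup maps $BC(\R)$ into itself. The heart of the matter is the induction claim
\[
0\le u_1(t)\le u_2(t)\le\cdots\le u_n(t)\le u_{n+1}(t)\le\supu(t),\qquad t>0.
\]
The increasing chain follows from the positivity-preserving property of $e^{-tL}$ (Lemma \ref{contr and posi} \textbf{(iii)}, carried over to the $BC$-semigroup through the positivity of the heat kernel of $L$) together with the monotonicity of $r\mapsto r^p$ on $[0,\infty)$: from $0\le u_{n-1}(s)\le u_n(s)$ one gets $\lr{\cdot}^{-m}u_{n-1}(s)^p\le\lr{\cdot}^{-m}u_n(s)^p$, and applying $e^{-(t-s)L}$ keeps this order; nonnegativity of the integral term gives $u_{n+1}(t)\ge e^{-tL}u_0=u_1(t)$ for the base case. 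The bound $u_{n+1}(t)\le\supu(t)$ comes from the induction hypothesis $u_n(s)\le\supu(s)$ and the supersolution inequality in the statement, the base case $u_1(t)=e^{-tL}u_0\le\supu(t)$ being that same inequality combined with nonnegativity of its integral term (which uses $u_0\ge0$ and positivity preservation).

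By the monotone convergence theorem the pointwise limit $u(x,t):=\lim_{n\to\infty}u_n(x,t)$ exists, satisfies $0\le u(t)\le\supu(t)$, and hence is bounded on $\R\times(0,\infty)$ since $\supu\in BC(\R\times(0,\infty))$. Passing to the limit in the iteration—applying monotone convergence both to the time integral and to the heat-kernel representation of $e^{-(t-s)L}$, for which the integrands are increasing in $n$—shows that
\[
u(t)=e^{-tL}u_0+\int_0^te^{-(t-s)L}[\lr{\cdot}^{-m}u(s)^p]\,ds\quad\text{in}\ \R
\]
for every $t>0$. Standard heat-kernel estimates then give $u\in C(\R\times[0,\infty))$ with $u(\cdot,0)=u_0$; since $\lr{\cdot}^{-m}u^p$ is thereby bounded and locally Hölder continuous on $\R\times(0,\infty)$, a bootstrap with interior Schauder estimates applied to the above integral identity yields $u\in C^{2;1}(\R\times(0,\infty))$ solving $\pa_tu-\pa_x^2u+Vu=\lr{\cdot}^{-m}u^p$ classically. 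Equivalently, $u$ coincides on $[0,T_\textup{max}(u_0))$ with the unique local solution of Lemma \ref{blow-up alt}—both solve \eqref{int eq} with data $u_0$—so the boundedness of $u$ rules out \eqref{blow-up} and forces $T_\textup{max}(u_0)=\infty$. In either case \eqref{main eq} admits a global-in-time solution (nontrivial whenever $u_0\not\equiv0$, since then $u\ge e^{-tL}u_0>0$).

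The step I expect to be the most delicate is securing the order- and positivity-preserving properties at the level of the $BC$-semigroup—strict positivity of the heat kernel of $L$ and the associated comparison principle—together with the regularity bootstrap turning the bounded mild solution into a genuine $C^{2;1}$ solution; by contrast, the monotone iteration and the limit passage are routine once those ingredients are in hand.
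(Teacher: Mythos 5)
The paper gives no proof of this lemma at all---it simply points to \cite{IK2020}---and your monotone-iteration argument is exactly the standard proof such references supply, so there is nothing to compare it against unfavourably. Your chain of steps (increasing iterates squeezed between $e^{-tL}u_0$ and $\supu$ via positivity and order preservation of the semigroup, monotone passage to the limit in the Duhamel identity, and then either a Schauder bootstrap or, more simply, identification with the unique local solution of Lemma \ref{blow-up alt} so that boundedness rules out \eqref{blow-up} and forces $T_{\textup{max}}(u_0)=\infty$) is correct and complete.
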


\section{Existence of nontrivial global-in-time solutions}
\label{section3}
In this section, we prove Theorem \ref{main thm} \textbf{(ii)}.
To this aim, we consider the following three cases (cf. Table \ref{table1}):
\begin{center}
\textbf{(I)} $\alpha>\frac{1}{2}$;\quad\textbf{(\II)} $0\le\alpha\le\frac{1}{2}$;\quad\textbf{(\III)} $\alpha<0$.
\end{center}
First, we prove the case \textbf{(I)}.

\begin{lemma}
	\label{case I}
	In the case \textbf{\textup{(I)}}, \eqref{main eq} possesses a nontrivial global-in-time solution for some initial data.
\end{lemma}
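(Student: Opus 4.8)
The plan is to construct a global supersolution of the form $\supu(x,t) = \delta\, e^{-tL}[\lr{\cdot}^{-1-\alpha}]$ for a small constant $\delta>0$, relying on the decay estimates for $\{e^{-tL}\}_{t\ge0}$ from Lemma~\ref{decay} together with the contraction property from Lemma~\ref{contr and posi}, and then invoke Lemma~\ref{supersol method}. Concretely, I would first pick the initial datum $u_0 = \delta \lr{\cdot}^{-1-\alpha}$ (which lies in $BC(\R)$ since $\alpha>\tfrac12>0$, and lies in $L^{1,\alpha}$ since $\lr{\cdot}^{\alpha}\lr{\cdot}^{-1-\alpha}=\lr{\cdot}^{-1}\in L^1(\R)$), and set $w(t) = e^{-tL}u_0 = \delta\, e^{-tL}[\lr{\cdot}^{-1-\alpha}]$. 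By Lemma~\ref{decay}~\textbf{(i)} with $q_1=1$, $q_2=\infty$ (so that the weight on the source is $\psi^{2/q_1-1}=\psi$, consistent with $u_0\in L^{1,\alpha}$, and the weight on the output is $\psi^{-1}$), one obtains
\[
\|\psi^{-1}w(t)\|_{L^\infty}\le C\,t^{-\frac{1+2\alpha}{2}}\|\psi u_0\|_{L^1},
\]
hence, using Remark~2.1~\textbf{(ii)} (i.e. $\psi_1\lr{x}^\alpha\le\psi(x)\le\psi_2\lr{x}^\alpha$), a pointwise bound $w(x,t)\le C\delta\,\lr{x}^\alpha\min\{1,\,t^{-\frac{1+2\alpha}{2}}\}$; the bound $w(x,t)\le C\delta\lr{x}^\alpha$ for all $t$ follows from Lemma~\ref{contr and posi}~\textbf{(ii)} applied to $u_0\in L^{\infty,-\alpha}$. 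These two bounds are what control the Duhamel source term.

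The core estimate is then to check, for $\delta$ small, that
\[
w(t)+\int_0^t e^{-(t-s)L}\!\left[\lr{\cdot}^{-m}w(s)^p\right]ds \;\le\; \frac{w(t)}{\delta}\cdot\delta \;=\; \text{(a constant multiple of }w(t)),
\]
more precisely that the Duhamel term is bounded by $(1-\text{const})\,w(t)$ so that $\supu:=Cw$ is a supersolution. To estimate $\lr{\cdot}^{-m}w(s)^p$ I would use $w(x,s)^p \le (C\delta)^p \lr{x}^{\alpha p}\min\{1,s^{-\frac{(1+2\alpha)p}{2}}\}$, so the weighted source obeys $\lr{x}^{-m}w(x,s)^p \le (C\delta)^p\lr{x}^{\alpha p - m}\,\min\{1,s^{-\frac{(1+2\alpha)p}{2}}\}$. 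Writing $\lr{\cdot}^{\alpha p-m}=\lr{\cdot}^{\alpha}\cdot\lr{\cdot}^{(\alpha(p-1)-m)}$ and noting that in Case~\textbf{(I)} we have $p>p_*(\alpha,m)=1+\frac{[2-m]_+}{1+\alpha}$, the exponent $\alpha(p-1)-m$ together with the time-decay rate $\frac{(1+2\alpha)p}{2}$ is arranged so that $\lr{\cdot}^{-m}w(s)^p$ can be dominated by a combination of an $L^{1,\alpha}$-function (times an integrable-in-$s$ power of $s$ near $s=\infty$) and an $L^{\infty,-\alpha}$-function (times an integrable-in-$s$ power near $s=0$); applying Lemma~\ref{decay}~\textbf{(i)} (with $q_1=q_2=\infty$, trivially, or $q_1=1,q_2=\infty$) and Lemma~\ref{contr and posi}~\textbf{(ii)} to these two pieces and carrying out the $s$-integral yields a bound of the form $\int_0^t e^{-(t-s)L}[\lr{\cdot}^{-m}w(s)^p]\,ds \le C'\delta^{p}\,w(x,t)$ (after re-expressing everything in terms of $\lr{x}^\alpha$ and the $\min\{1,t^{-(1+2\alpha)/2}\}$ profile). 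Choosing $\delta$ so small that $C'\delta^{p-1}\le \tfrac12$ and then taking $\supu = 2w$ closes the supersolution inequality, and Lemma~\ref{supersol method} gives the global solution.

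The main obstacle is the bookkeeping in the $s$-integral: one must split $\int_0^t = \int_0^{t/2}+\int_{t/2}^t$ (or split at $s=1$ and $s=t/2$), use on each piece whichever of the two pointwise bounds on $w(s)^p$ (the $L^\infty$ one near $s=0$, the decaying $L^1$ one near $s=\infty$) makes the $s$-integral converge, and verify that the resulting spatial weight is again $\lesssim \lr{x}^\alpha\min\{1,t^{-(1+2\alpha)/2}\}$ rather than something growing faster — this is exactly where the condition $p>1+\frac{[2-m]_+}{1+\alpha}$ (equivalently $(p-1)(1+\alpha) > [2-m]_+$, which in particular forces $\frac{(1+2\alpha)p}{2}>1$ when $m<2$) is used, and where the Hardy-type inequality of Lemma~\ref{Hardy} and the precise $L^1$–$L^\infty$ smoothing rate $\frac{1+2\alpha}{2}$ enter. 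I expect no serious difficulty beyond this; the subcriticality $\alpha>\tfrac12$ guarantees both that $u_0\in BC(\R)\cap L^{1,\alpha}$ and that the smoothing exponent $\frac{1+2\alpha}{2}>1$, which is what makes the construction go through cleanly in Case~\textbf{(I)}.
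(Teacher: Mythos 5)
Your proposal takes a genuinely different route from the paper (the paper proves Lemma \ref{case I} by the blow-up alternative of Lemma \ref{blow-up alt}: it fixes a small compactly supported $u_0$, introduces $M(T)=\sup_{0\le t<T}(\|\psi u(t)\|_{L^1}+(1+t)^{\frac{3+2\alpha}{4}}\|L^{\frac12}u(t)\|_{L^2})$, and closes a bootstrap $M(T)\le C\ep+CM(T)^p$ using the Hardy-type inequality of Lemma \ref{Hardy}, the weighted Nash inequalities of Lemmas \ref{weighted Nash}--\ref{Nash} and the decay estimates of Lemma \ref{decay}; the supersolution method is reserved for the case \textbf{(\III)}). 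Unfortunately, your version has a fatal error at the very first step: for $\alpha>\frac12$ the function $\lr{\cdot}^{\alpha}\lr{\cdot}^{-1-\alpha}=\lr{\cdot}^{-1}=(1+x^2)^{-1/2}$ is \emph{not} in $L^1(\R)$ (it decays like $|x|^{-1}$), so $u_0=\delta\lr{\cdot}^{-1-\alpha}\notin L^{1,\alpha}$ and Lemma \ref{decay} \textbf{(i)} with $q_1=1$ cannot be applied; the asserted bound $\|\psi^{-1}w(t)\|_{L^\infty}\le Ct^{-\frac{1+2\alpha}{2}}\|\psi u_0\|_{L^1}$ has an infinite right-hand side, and indeed the true profile of $e^{-tL}[\lr{\cdot}^{-1-\alpha}]$ should carry an extra logarithmic factor precisely because of this borderline non-integrability. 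This is not cosmetic: the entire pointwise profile $\lr{x}^{\alpha}\min\{1,t^{-\frac{1+2\alpha}{2}}\}$ on which the rest of the argument rests is unjustified.

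There is a second structural gap even if the initial datum were repaired. To conclude that $\supu=2w$ is a supersolution you need the Duhamel term to be dominated by a small multiple of $w(x,t)$ itself, which requires a matching \emph{lower} bound for $w=e^{-tL}u_0$ of the same shape as the upper bound. Lemma \ref{decay} and Lemma \ref{contr and posi} give only upper bounds. In the paper this two-sided control is exactly what Lemma \ref{upper bound} supplies via the explicit supersolution $\supv$ of Lemma \ref{supersol of Sobajima--Wakasugi} and the Liouville transform, but that construction is restricted to $-\frac12<\alpha<\frac12$: for $\alpha>\frac12$ the map $H(x)=\int_0^x\psi(y)^{-2}\,dy$ has finite limits at $\pm\infty$, so the transform no longer produces a diffusion on all of $\R$ and the whole mechanism breaks down. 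A further unaddressed point is that $\lr{\cdot}^{-m}w(s)^p\lesssim\lr{\cdot}^{\alpha p-m}(\cdots)$ need not lie in $L^{\infty,-\alpha}$ (e.g.\ when $m<\alpha(p-1)$), so neither Lemma \ref{contr and posi} \textbf{(ii)} nor Lemma \ref{decay} \textbf{(i)} applies to the source as you propose. I would recommend abandoning the supersolution ansatz in this regime and following the contraction/bootstrap argument built on Lemmas \ref{Hardy}, \ref{weighted Nash}, \ref{Nash} and \ref{decay}.
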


\begin{proof}
	By the comparison principle, we only need to consider the case $m\le2$.
	We fix a nonnegative function $u_0\in C_0^\infty(\R)\backslash\{0\}$ satisfying $\|\psi u_0\|_{L^1}+\|L^{\frac{1}{2}}u_0\|_{L^2}\le\ep$ for sufficiently small $\ep>0$. 
	By Lemma \ref{blow-up alt}, there exist $T\in(0,\infty]$ and a local-in-time solution $u\in C^{2;1}(\R\times(0,T))\cap C(\R\times[0,T))$ to \eqref{main eq} satisfying \eqref{bounded sol}, \eqref{int eq} and \eqref{blow-up} if $T<\infty$.
	We set
	\[
	M(T)=\sup_{0\le t<T}(\|\psi u(t)\|_{L^1}+(1+t)^{\frac{3+2\alpha}{4}}\|L^{\frac{1}{2}}u(t)\|_{L^2}).
	\]
	We fix $t\in(0,T)$.
	First, we prove the following inequality:
	\begin{equation}
	\label{ineq I1}
	\|\psi u(t)\|_{L^1}\le\ep+CM(T)^p.
	\end{equation}
	By Lemma \ref{Hardy} \textbf{(ii)}, we have for every $s\in[0,t]$,
	\begin{equation}
		\label{ineq I2}
		\|\lr{\cdot}^{-\frac{1}{2}}u(s)\|_{L^\infty}\le C\|L^\frac{1}{2}u(s)\|_{L^2}\le CM(T)(1+s)^{-\frac{3+2\alpha}{4}}.
	\end{equation}
	This together with Lemma \ref{weighted Nash} \textbf{(ii)} implies that for every $s\in[0,t]$,
	\begin{equation}
		\label{ineq I3}
		\begin{split}
		\|u(s)\|_{L^\infty}&\le C\|\lr{\cdot}^{-\frac{1}{2}}u(s)\|_{L^\infty}^\frac{2\alpha}{3+2\alpha}\|\psi^\frac{1}{3}u(s)\|_{L^\infty}^\frac{3}{3+2\alpha}\\
		&\le C\|\lr{\cdot}^{-\frac{1}{2}}u(s)\|_{L^\infty}^\frac{2\alpha}{3+2\alpha}\|\psi u(s)\|_{L^1}^\frac{1}{3+2\alpha}\|L^\frac{1}{2}u(s)\|_{L^2}^\frac{2}{3+2\alpha}\\
		&\le CM(T)(1+s)^{-\frac{1+\alpha}{2}}.
		\end{split}
	\end{equation}
	It follows from \eqref{ineq I2} that for every $s\in[0,t]$,
	\[
	\|\lr{\cdot}^{-\frac{3}{2}-\delta_1}u(s)\|_{L^1}\le\|\lr{\cdot}^{-\frac{1}{2}}u(s)\|_{L^\infty}\|\lr{\cdot}^{-1-\delta_1}\|_{L^1}\le CM(T)(1+s)^{-\frac{3+2\alpha}{4}},
	\]
	where $\delta_1>0$ is a sufficiently small constant.
	Combining the above inequality with \eqref{ineq I3}, we obtain for every $s\in[0,t]$,
	\begin{equation}
	\label{ineq I5}
	\begin{split}
	\|\psi\lr{\cdot}^{-m}u(s)^p\|_{L^1}&\le C\|u(s)\|_{L^\infty}^{p-1}\|\psi u(s)\|_{L^1}^{1-\frac{2m}{3+2\alpha+2\delta_1}}\|\lr{\cdot}^{-\frac{3}{2}-\delta_1}u(s)\|^{\frac{2m}{3+2\alpha+2\delta_1}}\\
	&\le CM(T)^p(1+s)^{-\frac{1+\alpha}{2}(p-1)-\frac{3+2\alpha}{4}\frac{2m}{3+2\alpha+2\delta_1}}\\
	&\le CM(T)^p(1+s)^{-1-\delta_2},
	\end{split}
	\end{equation}
	where $\delta_2>0$ is a sufficiently small constant.
	Combining \eqref{int eq}, Lemma \ref{contr and posi} \textbf{(i)} and \eqref{ineq I5}, we get
	\[
		\begin{split}
			\|\psi u(t)\|_{L^1}&\le\|\psi(e^{-tL}u_0)\|_{L^1}+\int_0^t\|\psi e^{-(t-s)L}[\lr{\cdot}^{-m}u(s)^p]\|_{L^1}\,ds\\
			&\le\|\psi u_0\|_{L^1}+\int_0^t\|\psi\lr{\cdot}^{-m}u(s)^p\|_{L^1}\,ds\\
			&\le\ep+CM(T)^p\int_0^t(1+s)^{-1-\delta_2}\,ds\\
			&\le\ep+CM(T)^p.
		\end{split}
	\]
	
	Next, we prove the following inequality:
	\begin{equation}
	\label{ineq I6}
	(1+t)^{\frac{3+2\alpha}{4}}\|L^\frac{1}{2}u(t)\|_{L^2}\le C\ep+CM(T)^p.
	\end{equation}
	Noting that $\|L^{\frac{1}{2}}\lr{\cdot}^{-m}u(s)^p\|_{L^2}\le CM(T)^p$ for every $s\in[0,t]$, we see from Lemma \ref{decay} \textbf{(ii)} and \eqref{ineq I5} that
	\begin{equation}
	\label{ineq I7}
	\begin{split}
	&\int_0^\frac{t}{2}\|L^{\frac{1}{2}}e^{-(t-s)L}[\lr{\cdot}^{-m}u(s)^p]\|_{L^2}\,ds\\
	&\le C\int_0^\frac{t}{2}((1+t-s)^{-\frac{3+2\alpha}{4}}\|\psi\lr{\cdot}^{-m}u(s)^p\|_{L^1}+e^{-(t-s)}\|L^{\frac{1}{2}}\lr{\cdot}^{-m}u(s)^p\|_{L^2})\,ds\\
	&\le CM(T)^p\int_0^\frac{t}{2}(1+t-s)^{-\frac{3+2\alpha}{4}}(1+s)^{-1-\delta_2}\,ds+CM(T)^pte^{-\frac{t}{2}}\\
	&\le CM(T)^p(1+t)^{-\frac{3+2\alpha}{4}}.
	\end{split}
	\end{equation}
	We fix $q\in[1,2]$ satisfying $\frac{1}{q}=\frac{1}{2}+\frac{1-2\delta_2}{1+2\alpha}$.
	In the case $m\le1$, noting that $\frac{1+2\alpha}{4}\left(\frac{2}{q}-1\right)=\frac{1}{2}-\delta_2$ and combining Lemma \ref{Hardy} \textbf{(i)} and Lemma \ref{Nash}, we have
	\begin{equation}
	\label{ineq I8}
	\begin{split}
	&\|\psi^{\frac{2}{q}-1}\lr{\cdot}^{-m}u(s)^p\|_{L^q}\\
	&\le\|u(s)\|_{L^\infty}^{p-1}\|\lr{\cdot}^{-m}u(s)\|_{L^2}^{2-\frac{2}{q}}\|\psi\lr{\cdot}^{-m}u(s)\|_{L^1}^{\frac{2}{q}-1}\\
	&\le\|u(s)\|_{L^\infty}^{p-1}\|\lr{\cdot}^{-1}u(s)\|_{L^2}^{m\left(2-\frac{2}{q}\right)}\|u(s)\|_{L^2}^{(1-m)\left(2-\frac{2}{q}\right)}\|\psi\lr{\cdot}^{-m}u(s)\|_{L^1}^{\frac{2}{q}-1}\\
	&\le\|u(s)\|_{L^\infty}^{p-1}\|L^\frac{1}{2}u(s)\|_{L^2}^{\frac{1+2\alpha+2m}{3+2\alpha}\left(2-\frac{2}{q}\right)}\|\psi u(s)\|_{L^1}^{\frac{2}{3+2\alpha}(1-m)\left(2-\frac{2}{q}\right)}\|\psi\lr{\cdot}^{-m}u(s)\|_{L^1}^{\frac{2}{q}-1}\\
	&\le CM(T)^p(1+s)^{-\frac{1+\alpha}{2}(p-1)-\frac{m}{2}-\frac{1+2\alpha}{4}+\frac{1+2\alpha}{4}\left(\frac{2}{q}-1\right)+\delta_3}\\
	&\le CM(T)^p(1+s)^{-\frac{3+2\alpha}{4}-\delta_2},
	\end{split}
	\end{equation}
	where $\delta_3>0$ is a sufficiently small constant.
	Similarly, in the case $m>1$, noting that $\alpha\left(\frac{2}{q}-1\right)=\frac{3}{2}-\frac{1}{q}-2\delta_2$, we obtain
	\[
	\begin{split}
	\|\psi^{\frac{2}{q}-1}\lr{\cdot}^{-m}u(s)^p\|_{L^q}&\le C\|\lr{\cdot}^{\frac{3}{2}-\frac{1}{q}-2\delta_2-m}u(s)^p\|_{L^q}\\
	&\le C\|u(s)\|_{L^\infty}^{p-1}\|\lr{\cdot}^{1-m}u(s)\|_{L^2}^{2-\frac{2}{q}}\|\lr{\cdot}^{\frac{1}{2}-m-\delta_4}u(s)\|_{L^1}^{\frac{2}{q}-1}\\
	&\le CM(T)^p(1+s)^{-\frac{1+\alpha}{2}(p-1)-\frac{m}{2}+\frac{1}{4}-\frac{\alpha}{2}+\delta_5}\\
	&\le CM(T)^p(1+s)^{-\frac{3+2\alpha}{4}-\delta_2},
	\end{split}
	\]
	where $\delta_4,\delta_5>0$ are sufficiently small constants.
	Combining the above inequality with Lemma \ref{decay} \textbf{(ii)} and \eqref{ineq I8}, we get
	\[
	\begin{split}
	\int_\frac{t}{2}^t\|L^{\frac{1}{2}}e^{-(t-s)L}[\lr{\cdot}^{-m}u(s)^p]\|_{L^2}\,ds&\le C\int_\frac{t}{2}^t(t-s)^{-1+\delta_2}\|\psi^{\frac{2}{q}-1}\lr{\cdot}^{-m}u(s)^p\|_{L^q}\,ds\\
	&\le CM(T)^p\int_\frac{t}{2}^t(t-s)^{-1+\delta_2}(1+s)^{-\frac{3+2\alpha}{4}-\delta_2}\,ds\\
	&\le CM(T)^p(1+t)^{-\frac{3+2\alpha}{4}}.
	\end{split}
	\]
	This together with \eqref{int eq}, Lemma \ref{decay} \textbf{(ii)} and \eqref{ineq I7}, we see that
	\[
	\begin{split}
	\|L^{\frac{1}{2}}u(t)\|_{L^2}&\le\|L^{\frac{1}{2}}e^{-tL}u_0\|_{L^2}+\int_0^t\|L^{\frac{1}{2}}e^{-(t-s)L}[\lr{\cdot}^{-m}u(s)^p]\|_{L^2}\,ds\\
	&\le C(1+t)^{-\frac{3+2\alpha}{4}}(\|\psi u_0\|_{L^1}+\|L^{\frac{1}{2}}u_0\|_{L^2})+CM(T)^p(1+t)^{-\frac{3+2\alpha}{4}}\\
	&\le C(\ep+M(T)^p)(1+t)^{-\frac{3+2\alpha}{4}}.
	\end{split}
	\]
	
	Finally, we prove $T=\infty$.
	We assume that $T<\infty$.
	Combining \eqref{ineq I1} and \eqref{ineq I6}, we have $M(T)\le C\ep+CM(T)^p$.
	Since $\ep$ is sufficiently small, we see that $M(T)\le C\ep$.
	Therefore, by \eqref{ineq I3}, we obtain
	\[
	\|u(t)\|_{L^\infty}\le CM(T)(1+t)^{-\frac{1+\alpha}{2}}\le C\ep(1+t)^{-\frac{1+\alpha}{2}},
	\]
	and this contradicts \eqref{blow-up}.
	The proof is complete.
\end{proof}

Next, we prove the case \textbf{(\II)}.
In this case, since the Hardy-type inequality does not hold, we use the $L^{\infty,-\alpha}$-$L^{1,\alpha}$ decay estimate for $\{e^{-tL}\}_{t\ge0}$ instead of the Hardy-type one.

\begin{lemma}
	\label{case II}
	In the case \textbf{\textup{(\II)}}, \eqref{main eq} possesses a nontrivial global-in-time solution for some initial data.
\end{lemma}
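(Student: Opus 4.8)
The plan is to rerun the proof of Lemma~\ref{case I} verbatim, with one structural change forced by the failure of the Hardy-type inequality when $\alpha\le\frac12$: every appeal to Lemma~\ref{Hardy} is replaced by the $L^{\infty,-\alpha}$--$L^{1,\alpha}$ smoothing estimate contained in Lemma~\ref{decay}~\textbf{(i)} with $q_1=1$, $q_2=\infty$, namely $\|\psi^{-1}(e^{-tL}f)\|_{L^\infty}\le Ct^{-\frac{1+2\alpha}{2}}\|\psi f\|_{L^1}$, so that the decaying pointwise-type control on the solution is carried directly inside the bootstrapping quantity instead of being extracted from $\|L^{\frac12}u\|_{L^2}$. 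Concretely, after the routine reduction $m\le2$ by the comparison principle, I would fix a nonnegative $u_0\in C_0^\infty(\R)\backslash\{0\}$ with $\|\psi u_0\|_{L^1}+\|\psi^{-1}u_0\|_{L^\infty}+\|L^{\frac12}u_0\|_{L^2}\le\ep$ for small $\ep>0$, take the local solution $u$ on $[0,T)$ from Lemma~\ref{blow-up alt} (so that \eqref{int eq} holds), and set
\[
M(T)=\sup_{0\le t<T}\left(\|\psi u(t)\|_{L^1}+(1+t)^{\frac{1+2\alpha}{2}}\|\psi^{-1}u(t)\|_{L^\infty}+(1+t)^{\frac{3+2\alpha}{4}}\|L^{\frac12}u(t)\|_{L^2}\right),
\]
a standard continuity argument giving $M(T)<\infty$. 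The decay exponents $\tfrac{1+2\alpha}{2}$ and $\tfrac{3+2\alpha}{4}$ are exactly those produced by Lemma~\ref{decay}~\textbf{(i)} (with $q_1=1$, $q_2=\infty$) and Lemma~\ref{decay}~\textbf{(ii)} (with $q=1$) acting on $L^{1,\alpha}$ data, and for $\alpha\le\frac12$ one has $\tfrac{1+2\alpha}{2}\le\tfrac{3+2\alpha}{4}$, so the three ingredients are mutually compatible.

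The heart of the argument is the closed inequality $M(T)\le C\ep+CM(T)^p$. First I would interpolate the ingredients of $M(T)$: Lemma~\ref{weighted Nash}~\textbf{(ii)} gives $\|u(s)\|_{L^\infty}\le C\|\psi u(s)\|_{L^1}^{1/3}\|L^{\frac12}u(s)\|_{L^2}^{2/3}\le CM(T)(1+s)^{-\frac{3+2\alpha}{6}}$, while $\|u(s)\|_{L^2}^2\le\|\psi^{-1}u(s)\|_{L^\infty}\|\psi u(s)\|_{L^1}$ together with Lemma~\ref{Nash} and Lemma~\ref{weighted Nash}~\textbf{(i)} supply the intermediate weighted bounds $\|\lr{\cdot}^{-\sigma}u(s)\|_{L^q}\le CM(T)(1+s)^{-\gamma}$ needed later; these play the role that \eqref{ineq I2} and \eqref{ineq I3} play in Lemma~\ref{case I}, with $\|\psi^{-1}u(s)\|_{L^\infty}$ in place of the Hardy quantity $\|\lr{\cdot}^{-1/2}u(s)\|_{L^\infty}$. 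Next, I would bound the nonlinearity $F(s)=\lr{\cdot}^{-m}u(s)^p$ in the weighted norms $\|\psi F(s)\|_{L^1}$, $\|\psi^{\frac2q-1}F(s)\|_{L^q}$ (for a $q\in[1,2]$ chosen close to $1$, as in \eqref{ineq I8}) and, where needed, $\|\psi^{-1}F(s)\|_{L^\infty}$, by factoring $\psi\lr{\cdot}^{-m}u^p=u^{p-1}\,(\psi u)^{1-b}\,(\lr{\cdot}^{-\tau}u)^{b}$ with $\tau$ slightly above $1+\alpha$ and $b$ determined by matching the spatial weights (with a further split according to $m\lessgtr1$, as in Lemma~\ref{case I}), then applying H\"older together with $\|\lr{\cdot}^{-\tau}u(s)\|_{L^1}\le C\|\psi^{-1}u(s)\|_{L^\infty}$; one checks each such norm is $\le CM(T)^p(1+s)^{-1-\delta}$ for a small $\delta>0$. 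Inserting these into \eqref{int eq}, splitting each time integral at $s=t/2$, and using Lemma~\ref{contr and posi}~\textbf{(i)},~\textbf{(ii)} on the short-time piece and Lemma~\ref{decay} on the long-time piece exactly as in \eqref{ineq I7} and the displays after it, yields $M(T)\le C\ep+CM(T)^p$. Smallness of $\ep$ then forces $M(T)\le C\ep$, hence $\|u(t)\|_{L^\infty}\le C\ep(1+t)^{-\frac{3+2\alpha}{6}}$, contradicting \eqref{blow-up} if $T<\infty$; therefore $T=\infty$ and $u$ is global.

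As in Lemma~\ref{case I}, the only genuine difficulty is exponent bookkeeping: the small parameter $\delta$ and the exponent $q$ must be tuned so that every time integrand is integrable precisely under $p>p_*(\alpha,m)$, and this is where the two competing expressions in \eqref{def of p_*} enter — the decay rate $\frac{1+2\alpha}{2}$ of $\|\psi^{-1}u\|_{L^\infty}$ is what produces the term $\frac{2}{1+2\alpha}$, while the balance between $\|\psi u\|_{L^1}$ and the weight $\lr{\cdot}^{-m}$ (encoded in the exponent $b$) produces $1+\frac{2-m}{1+\alpha}$. The point demanding the most care is the regime where $\psi^{p-1}\lr{\cdot}^{-m}$ is unbounded (roughly $\alpha(p-1)>m$, or $m$ near $2$), so the naive factorization through $\|u(s)\|_{L^\infty}^{p-1}$ fails; there one must first spend part of the spatial decay contained in $u=\psi\cdot\psi^{-1}u$ with $\psi^{-1}u\in L^\infty$ before applying H\"older — which is exactly the step where $\|\lr{\cdot}^{-\tau}u\|_{L^1}\le C\|\psi^{-1}u\|_{L^\infty}$, and ultimately the $L^{\infty,-\alpha}$--$L^{1,\alpha}$ smoothing estimate, become indispensable.
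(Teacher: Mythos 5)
Your overall architecture coincides with the paper's: the same functional $M(T)$ built from $\|\psi u(t)\|_{L^1}$, $(1+t)^{\frac{1+2\alpha}{2}}\|\psi^{-1}u(t)\|_{L^\infty}$ and $(1+t)^{\frac{3+2\alpha}{4}}\|L^{\frac12}u(t)\|_{L^2}$, the same replacement of the Hardy inequality by the $L^{\infty,-\alpha}$--$L^{1,\alpha}$ estimate of Lemma~\ref{decay}~\textbf{(i)}, and the same Duhamel splitting. However, there is a genuine gap in the one step that actually determines the admissible range of $p$: the pointwise bound you write,
\[
\|u(s)\|_{L^\infty}\le C\|\psi u(s)\|_{L^1}^{\frac{1}{3}}\|L^{\frac{1}{2}}u(s)\|_{L^2}^{\frac{2}{3}}\le CM(T)(1+s)^{-\frac{3+2\alpha}{6}},
\]
obtained by simply discarding the weight in $\|\psi^{\frac13}u\|_{L^\infty}$, is strictly weaker than what is needed whenever $\alpha>0$. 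The sharp rate is $\frac{1+\alpha}{2}$, and it is reached only by first interpolating $\|u\|_{L^\infty}\le\|\psi^{-1}u\|_{L^\infty}^{\frac14}\|\psi^{\frac13}u\|_{L^\infty}^{\frac34}$ (the exponents are forced by $-\frac{\alpha}{4}+\frac{3}{4}\cdot\frac{\alpha}{3}=0$) and then applying Lemma~\ref{weighted Nash}~\textbf{(ii)}; this is exactly \eqref{ineq II1-5}, the true analogue of \eqref{ineq I3} with $\|\psi^{-1}u\|_{L^\infty}$ in the role of the Hardy quantity. Your version spends the $\|\psi^{-1}u\|_{L^\infty}$ decay only through the factor $\|\lr{\cdot}^{-\tau}u\|_{L^1}^{b}$ with $b\approx\frac{m}{1+2\alpha}$, i.e.\ only as much as is needed to absorb the spatial weight, and puts the rest of the burden on the Nash quantity.

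The loss is not cosmetic. Feeding the rate $\frac{3+2\alpha}{6}$ into your factorization gives $\|\psi\lr{\cdot}^{-m}u(s)^p\|_{L^1}\le CM(T)^p(1+s)^{-\gamma}$ with $\gamma=\frac{3+2\alpha}{6}(p-1)+\frac{m}{2}$ instead of $\frac{1+\alpha}{2}(p-1)+\frac{m}{2}$, so the integrability requirement $\gamma>1$ forces $p>1+\frac{3(2-m)}{3+2\alpha}$, which for $\alpha>0$ and $m<2$ is strictly larger than the critical value $1+\frac{2-m}{1+\alpha}$: for instance, with $\alpha=\frac12$ and $m=0$ your scheme closes only for $p>\frac52$, while $p_*\bigl(\tfrac12,0\bigr)=\tfrac73$. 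Hence the argument as written does not cover all $p>p_*(\alpha,m)$ in case \textbf{(\II)} (it does at $\alpha=0$, where the two rates coincide). The repair is exactly the two-step interpolation above; once $\|u(s)\|_{L^\infty}\le CM(T)(1+s)^{-\frac{1+\alpha}{2}}$ is in hand, your bookkeeping reproduces the paper's proof, including the case analysis according to the size of $m$ relative to $\alpha(p-1)$ and $\alpha(p+1)$ in \eqref{ineq II5}.
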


\begin{proof}
	By the comparison principle, we only need to consider the case $m\le1+\alpha+\frac{2\alpha}{1+2\alpha}$.
	We fix $\delta>0$ satisfying $\frac{1+\alpha}{2}(p-1)+\frac{m}{2}-1>\delta$ and $\frac{1+2\alpha}{2}p-1>\delta$, and a nonnegative function $u_0\in C_0^\infty(\R)\backslash\{0\}$ satisfying $\|\psi u_0\|_{L^1}+\|\psi^{-1}u_0\|_{L^\infty}+\|L^{\frac{1}{2}}u_0\|_{L^2}\le\ep$ for sufficiently small $\ep>0$. 
	By Lemma \ref{blow-up alt}, there exist $T\in(0,\infty]$ and a local-in-time solution $u\in C^{2;1}(\R\times(0,T))\cap C(\R\times[0,T))$ to \eqref{main eq} satisfying \eqref{bounded sol}, \eqref{int eq} and \eqref{blow-up} if $T<\infty$.
	We set
	\[
	M(T)=\sup_{0\le t<T}(\|\psi u(t)\|_{L^1}+(1+t)^{\frac{1+2\alpha}{2}}\|\psi^{-1}u(t)\|_{L^\infty}+(1+t)^{\frac{3+2\alpha}{4}}\|L^{\frac{1}{2}}u(t)\|_{L^2}).
	\]
	We fix $t\in(0,T)$.
	First, we prove the following inequality:
	\begin{equation}
	\label{ineq II1}
	\|\psi u(t)\|_{L^1}\le\ep+CM(T)^p.
	\end{equation}
	By Lemma \ref{weighted Nash} \textbf{(ii)}, we have for every $s\in[0,t]$,
	\begin{equation}
	\label{ineq II1-5}
	\begin{split}
		\|u(s)\|_{L^\infty}&\le\|\psi^{-1}u(s)\|_{L^\infty}^{\frac{1}{4}}\|\psi^{\frac{1}{3}}u(s)\|_{L^\infty}^{\frac{3}{4}}\\
		&\le C\|\psi^{-1}u(s)\|_{L^\infty}^{\frac{1}{4}}\|\psi u(s)\|_{L^1}^{\frac{1}{4}}\|L^{\frac{1}{2}}u(s)\|_{L^2}^{\frac{1}{2}}\\
		&\le CM(T)(1+s)^{-\frac{1+\alpha}{2}}.
	\end{split}
	\end{equation}
	This implies that for every $s\in[0,t]$,
	\begin{equation}
		\label{ineq II2}
		\begin{split}
			\|\lr{\cdot}^{-m}u(s)^{p-1}\|_{L^\infty}&\le
			\begin{cases}
				\|\psi^{-1}u(s)\|_{L^\infty}^{\frac{m}{\alpha}}\|u(s)\|_{L^\infty}^{p-1-\frac{m}{\alpha}},&\text{if}\ m<\alpha(p-1),\\
				\|\psi^{-1}u(s)\|_{L^\infty}^{p-1},&\text{if}\ m\ge\alpha(p-1)
			\end{cases}
			\\
			&\le CM(T)^{p-1}
			\begin{cases}
				(1+s)^{-\frac{1+\alpha}{2}(p-1)-\frac{m}{2}},&\text{if}\ m<\alpha(p-1),\\
				(1+s)^{-\frac{1+2\alpha}{2}(p-1)},&\text{if}\ m\ge\alpha(p-1).
			\end{cases}
		\end{split}
	\end{equation}
	It follows that for every $s\in[0,t]$,
	\begin{equation}
		\label{ineq II3}
		\|\psi^{-1}u(s)\|_{L^1}=\int_{|x|\le\sqrt{1+s}}\psi^{-1}u(s)\,dx+\int_{|x|\ge\sqrt{1+s}}\psi^{-2}\psi u(s)\,dx\le CM(T)(1+s)^{-\alpha}.
	\end{equation}
	This implies that for every $q\in[1,\infty]$ and $s\in[0,t]$,
	\begin{equation}
		\label{ineq II4}
		\|\psi^{-p}u(s)^p\|_{L^q}\le\|\psi^{-1}u(s)\|_{L^\infty}^{p-\frac{1}{q}}\|\psi^{-1}u(s)\|_{L^1}^{\frac{1}{q}}\le CM(T)^p(1+s)^{-\frac{1+2\alpha}{2}p+\frac{1}{2q}}.
	\end{equation}
	We fix $\rho>0$ satisfying $\frac{1+\alpha}{2}(p-1)+\frac{m}{2(1+\rho)}-1>\delta$.
	In the case $m>\alpha(p+1)$, we set $q_1=\frac{1+\rho}{m-\alpha(p+1)}>1$ and $q_2=\frac{q_1}{q_1-1}$.
	Combining \eqref{ineq II2}, \eqref{ineq II3} and \eqref{ineq II4}, we obtain for every $s\in[0,t]$,
	\begin{equation}
		\label{ineq II5}
		\begin{split}
			&\|\psi\lr{\cdot}^{-m}u(s)^p\|_{L^1}\\
			&\le
			\begin{cases}
				\|\lr{\cdot}^{-m}u(s)^{p-1}\|_{L^\infty}\|\psi u(s)\|_{L^1},&\text{if}\ m<\alpha(p-1),\\
				\|\psi^{-1}u(s)\|_{L^\infty}^{p-1}\|\psi^{-1}u(s)\|_{L^1}^{\frac{m-\alpha(p-1)}{2\alpha}}\|\psi u(s)\|_{L^1}^{\frac{\alpha(p+1)-m}{2\alpha}},&\text{if}\ \alpha(p-1)\le m\le\alpha(p+1),\\
				\|\lr{\cdot}^{\alpha(p+1)-m}\|_{L^{q_1}}\|\psi^{-p}u(s)^p\|_{L^{q_2}},&\text{if}\ m>\alpha(p+1)
			\end{cases}
			\\
			&\le CM(T)^p
			\begin{cases}
				(1+s)^{-\frac{1+\alpha}{2}(p-1)-\frac{m}{2}},&\text{if}\ m<\alpha(p-1),\\
				(1+s)^{-\frac{1+2\alpha}{2}(p-1)-\frac{1}{2}(m-\alpha(p-1))},&\text{if}\ \alpha(p-1)\le m\le\alpha(p+1),\\
				(1+s)^{-\frac{1+\alpha}{2}(p-1)-\frac{m}{2(1+\rho)}},&\text{if}\ m>\alpha(p+1)
			\end{cases}
			\\
			&\le CM(T)^p(1+s)^{-1-\delta}.
		\end{split}
	\end{equation}
	This together with \eqref{int eq} and Lemma \ref{contr and posi} \textbf{(i)} implies that
	\[
		\begin{split}
			\|\psi u(t)\|_{L^1}&\le\|\psi(e^{-tL}u_0)\|_{L^1}+\int_0^t\|\psi e^{-(t-s)L}[\lr{\cdot}^{-m}u(s)^p]\|_{L^1}\,ds\\
			&\le\|\psi u_0\|_{L^1}+\int_0^t\|\psi\lr{\cdot}^{-m}u(s)^p\|_{L^1}\,ds\\
			&\le\ep+CM(T)^p\int_0^t(1+s)^{-1-\delta}\,ds\\
			&\le\ep+CM(T)^p.
		\end{split}
	\]
	
	Next, we prove the following inequality:
	\begin{equation}
	\label{ineq II6}
	(1+t)^\frac{1+2\alpha}{2}\|\psi^{-1}u(t)\|_{L^\infty}\le C\ep+CM(T)^p.
	\end{equation}
	Combining Lemma \ref{decay} \textbf{(i)}, Lemma \ref{contr and posi} \textbf{(ii)}, \eqref{ineq II5} and \eqref{ineq II2}, we have
	\[
	\begin{split}
	&\int_0^t\|\psi^{-1}e^{-(t-s)L}[\lr{\cdot}^{-m}u(s)^p]\|_{L^\infty}\,ds\\
	&\le C\int_0^t(1+t-s)^{-\frac{1+2\alpha}{2}}(\|\psi\lr{\cdot}^{-m}u(s)^p\|_{L^1}+\|\psi^{-1}\lr{\cdot}^{-m}u(s)^p\|_{L^\infty})\,ds\\
	&\le CM(T)^p\int_0^t(1+t-s)^{-\frac{1+2\alpha}{2}}(1+s)^{-1-\delta}\,ds\\
	&\le CM(T)^p(1+t)^{-\frac{1+2\alpha}{2}}.
	\end{split}
	\]
	This together with \eqref{int eq}, Lemma \ref{decay} \textbf{(i)} and Lemma \ref{contr and posi} \textbf{(ii)} implies that
	\[
		\begin{split}
			\|\psi^{-1}u(t)\|_{L^\infty}&\le\|\psi^{-1}(e^{-tL}u_0)\|_{L^\infty}+\int_0^t\|\psi^{-1}e^{-(t-s)L}[\lr{\cdot}^{-m}u(s)^p]\|_{L^\infty}\,ds\\
			&\le C(1+t)^{-\frac{1+2\alpha}{2}}(\|\psi u_0\|_{L^1}+\|\psi^{-1}u_0\|_{L^\infty})+CM(T)^p(1+t)^{-\frac{1+2\alpha}{2}}\\
			&\le C(\ep+M(T)^p)(1+t)^{-\frac{1+2\alpha}{2}}.
		\end{split}
	\]
	
	Next, we prove the following inequality:
	\begin{equation}
	\label{ineq II9}
	(1+t)^{\frac{3+2\alpha}{4}}\|L^\frac{1}{2}u(t)\|_{L^2}\le C\ep+CM(T)^p.
	\end{equation}
	Noting that $\|L^\frac{1}{2}\lr{\cdot}^{-m}u(s)^p\|_{L^2}\le CM(T)^p$ for every $s\in[0,t]$ and combining Lemma \ref{decay} \textbf{(ii)} and \eqref{ineq II5}, we have
	\[
	\begin{split}
	&\int_0^\frac{t}{2}\|L^\frac{1}{2}e^{-(t-s)L}[\lr{\cdot}^{-m}u(s)^p]\|_{L^2}\,ds\\
	&\le C\int_0^\frac{t}{2}((1+t-s)^{-\frac{3+2\alpha}{4}}\|\psi\lr{\cdot}^{-m}u(s)^p\|_{L^1}+e^{-(t-s)}\|L^\frac{1}{2}\lr{\cdot}^{-m}u(s)^p\|_{L^2})\,ds\\
	&\le CM(T)^p\int_0^\frac{t}{2}(1+t-s)^{-\frac{3+2\alpha}{4}}(1+s)^{-1-\delta}\,ds+CM(T)^pte^{-\frac{t}{2}}\\
	&\le CM(T)^p(1+t)^{-\frac{3+2\alpha}{4}}.
	\end{split}
	\]
	By Lemma \ref{decay} \textbf{(ii)} and \eqref{ineq II5}, we obtain
	\[
	\begin{split}
	\int_\frac{t}{2}^t\|L^\frac{1}{2}e^{-(t-s)L}[\lr{\cdot}^{-m}u(s)^p]\|_{L^2}\,ds&\le C\int_\frac{t}{2}^t(t-s)^{-\frac{3+2\alpha}{4}}\|\psi\lr{\cdot}^{-m}u(s)^p\|_{L^1}\,ds\\
	&\le CM(T)^p\int_\frac{t}{2}^t(t-s)^{-\frac{3+2\alpha}{4}}(1+s)^{-1-\delta}\,ds\\
	&\le CM(T)^p(1+t)^{-1-\delta}t^{1-\frac{3+2\alpha}{4}}\log(1+t)\\
	&\le CM(T)^p(1+t)^{-\frac{3+2\alpha}{4}}.
	\end{split}
	\]
	Therefore, combining \eqref{int eq} and Lemma \ref{decay} \textbf{(ii)}, we get
	\[
		\begin{split}
			\|L^\frac{1}{2}u(t)\|_{L^2}&\le\|L^\frac{1}{2}(e^{-tL}u_0)\|_{L^2}+\int_0^t\|L^\frac{1}{2}e^{-(t-s)L}[\lr{\cdot}^{-m}u(s)^p]\|_{L^2}\,ds\\
			&\le C(1+t)^{-\frac{3+2\alpha}{4}}(\|\psi u_0\|_{L^1}+\|L^\frac{1}{2}u_0\|_{L^2})+CM(T)^p(1+t)^{-\frac{3+2\alpha}{4}}\\
			&\le C(\ep+M(T)^p)(1+t)^{-\frac{3+2\alpha}{4}}.
		\end{split}
	\]
	
	Finally, we prove $T=\infty$.
	We assume that $T<\infty$.
	Combining \eqref{ineq II1}, \eqref{ineq II6} and \eqref{ineq II9}, we have $M(T)\le C\ep+CM(T)^p$.
	Since $\ep$ is sufficiently small, we see that $M(T)\le C\ep$.
	Therefore, by \eqref{ineq II1-5}, we obtain
	\[
	\|u(t)\|_{L^\infty}\le CM(T)(1+t)^{-\frac{1+\alpha}{2}}\le C\ep(1+t)^{-\frac{1+\alpha}{2}},
	\]
	and this contradicts \eqref{blow-up}.
	The proof is complete.
\end{proof}

Next, we prove the case \textbf{(\III)}.
In this case, our argument relies on the comparison principle.
To this aim, we deduce an upper bound for $e^{-tL}[\lr{\cdot}^{-1-\alpha}]$ via the following supersolution to a certain parabolic equation constructed in \cite{SW2021}.

\begin{lemma}
	\label{supersol of Sobajima--Wakasugi}
	Assume that a positive function $D\in C(\R)$ satisfies
	\begin{equation}
		\label{prop of D}
		\lim_{|y|\to\infty}|y|^{-\beta}D(y)=D_0
	\end{equation}
	for some constants $\beta<1$ and $D_0>0$, and $\rho\in\left(0,\frac{1-\beta}{2}\right)$.
	Then there exist a supersolution $\overline{v^*}\in C^2(\R\times[0,\infty))$ to the Cauchy problem of the parabolic equation
	\begin{equation}
	\label{Sobajima--Wakasugi eq}
	\begin{cases}
		\pa_tv^*-D\pa_y^2v^*=0,&y\in\R,\ t>0,\\
		v^*(y,0)=\lr{y}^{-2\rho},&y\in\R,
	\end{cases}
	\end{equation}
	and positive constants $C_{\textup{S},1}^*$ and $C_{\textup{S},2}^*$ such that for every $y\in\R$ and $t\ge0$,
	\begin{equation}
	\label{prop of Sobajima--Wakasugi}
	C_{\textup{S},1}^*(1+\lr{y}^{2-\beta}+t)^{-\frac{2\rho}{2-\beta}}\le\overline{v^*}(y,t)\le C_{\textup{S},2}^*(1+\lr{y}^{2-\beta}+t)^{-\frac{2\rho}{2-\beta}}.
	\end{equation}
\end{lemma}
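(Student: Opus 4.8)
The plan is to reduce the statement to producing a single explicit classical supersolution, and then to build it from a self‑similar profile adapted to the degeneracy of $D$. Since \eqref{Sobajima--Wakasugi eq} is invariant under time translation, it suffices to exhibit $\overline{v^*}\in C^2(\R\times[0,\infty))$, positive, with $\partial_t\overline{v^*}-D\partial_y^2\overline{v^*}\ge0$ on $\R\times(0,\infty)$, $\overline{v^*}(\cdot,0)\ge\lr{\cdot}^{-2\rho}$, and the two‑sided bound \eqref{prop of Sobajima--Wakasugi} (the initial inequality being arranged with margin by enlarging the leading constant). I would then introduce the even $C^2$ function $\Phi(y)=\int_0^{|y|}(|y|-\sigma)D(\sigma)^{-1}\,d\sigma$, which satisfies $D\Phi''\equiv1$, $\Phi(0)=\Phi'(0)=0$, and, by \eqref{prop of D}, $\Phi(y)\asymp\lr{y}^{2-\beta}$ together with $D(\Phi')^2/\Phi$ bounded on $\R$ and tending to $\lambda:=\frac{2-\beta}{1-\beta}$ as $|y|\to\infty$. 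Heuristically, $y\mapsto\Phi(y)^{1/2}$ turns $D\partial_y^2$ into the radial Laplacian in the effective dimension $N_*:=\tfrac{4}{\lambda}=\tfrac{2(1-\beta)}{2-\beta}>0$, and the hypothesis $\rho<\tfrac{1-\beta}{2}$ is exactly the subcriticality $\tfrac{4\rho}{2-\beta}<N_*$ (the datum decays strictly slower than the intrinsic dimension), which is what singles out $(1+\lr{y}^{2-\beta}+t)^{-\gamma}$, $\gamma=\tfrac{2\rho}{2-\beta}$, as the expected profile.

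Next I would seek $\overline{v^*}$ in the self‑similar form $\overline{v^*}(y,t)=(K+t)^{-\gamma}\,G\!\left(\frac{\Phi(y)}{K+t}\right)$ with $K\ge1$ and $G\in C^2([0,\infty))$ positive, nonincreasing, convex and comparable to $(1+z)^{-\gamma}$. Using $D\Phi''=1$ one computes, with $z=\Phi(y)/(K+t)$,
\[
\partial_t\overline{v^*}-D\partial_y^2\overline{v^*}
=-(K+t)^{-\gamma-1}\Big[\tfrac{D(\Phi')^2}{\Phi}\,z\,G''(z)+(z+1)\,G'(z)+\gamma\,G(z)\Big];
\]
since $G''\ge0$ and $D(\Phi')^2/\Phi$ is bounded, it suffices that $G$ satisfy the ordinary differential inequality $\Lambda\,z\,G''+(z+1)\,G'+\gamma\,G\le0$ on $[0,\infty)$, where $\Lambda:=\sup_\R D(\Phi')^2/\Phi$ (so $\Lambda$ is finite and asymptotically $\lambda$). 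A one‑line computation shows the naive choice $G(z)=(1+z)^{-\gamma}$ fails this inequality for every $z>0$; so instead I would take $G$ to be the solution of the equality version $\Lambda zG''+(z+1)G'+\gamma G=0$ normalized by its indicial behavior $G(z)\sim c_0 z^{-\gamma}$ as $z\to\infty$ (the exponent $\gamma$ is precisely the decaying indicial root at infinity, the competing solution tending to a nonzero constant), and normalized at $0$ so that $K^{-\gamma}G(\Phi(\cdot)/K)\ge\lr{\cdot}^{-2\rho}$.

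The main obstacle I expect is to prove that this $G$ is strictly positive on all of $[0,\infty)$ and obeys the matching lower bound $G(z)\ge c(1+z)^{-\gamma}$ (the upper bound being easier). In Sturm–Liouville form, $\big(z^{1/\Lambda}e^{z/\Lambda}G'\big)'=-\tfrac\gamma\Lambda\,z^{1/\Lambda-1}e^{z/\Lambda}G$, so $G$ is automatically decreasing as long as it stays positive; but whether it avoids a zero and settles onto the slow rate $z^{-\gamma}$ rather than crossing into the fast/borderline regime is a connection problem that must use the strict inequality $2\rho<1-\beta$. This is consistent with the fact that at equality the correct asymptotics become $(1+\lr{y}^{2-\beta}+t)^{-N_*/2}$ (with a logarithm), so the statement as phrased genuinely needs the strictness.

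Once such $G$ is in hand, there remains the transplantation to the true equation near $y=0$, where $D(\Phi')^2/\Phi<\Lambda$ and the operator is uniformly parabolic on compact $y$‑sets: there the supersolution inequality already holds because $G''\ge0$, and one only has to note that the global function is genuinely $C^2$ (choosing $G$ from the ODE it is $C^\infty$ on $(0,\infty)$ and $C^2$ at $0$ by the indicial relation $(\Lambda+1)G''(0)=\gamma(1+\gamma)G(0)$, while $\Phi\in C^2$) and to read off \eqref{prop of Sobajima--Wakasugi} by comparing $z\mapsto G(z)$ with $(1+z)^{-\gamma}$ and using $K+t+\Phi(y)\asymp1+\lr{y}^{2-\beta}+t$. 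This is essentially the construction carried out in \cite{SW2021}, to which I would appeal for the ODE positivity estimate and the bookkeeping.
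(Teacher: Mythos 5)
The paper does not actually prove this lemma---it is quoted verbatim from \cite{SW2021} with no proof supplied---so the only meaningful comparison is between your sketch and the construction you would need to carry out. Your skeleton is the right one (the function $\Phi$ with $D\Phi''=1$, the growth $\Phi\asymp\lr{y}^{2-\beta}$, the self-similar ansatz $(K+t)^{-\gamma}G(\Phi(y)/(K+t))$ with $\gamma=\tfrac{2\rho}{2-\beta}$, and the resulting ordinary differential inequality), and your observation that $(1+z)^{-\gamma}$ fails is correct. But the step you defer is the entire content of the lemma, and the specific reduction you propose does not survive scrutiny. Replacing $D(\Phi')^2/\Phi$ by its supremum $\Lambda$ and solving $\Lambda zG''+(z+1)G'+\gamma G=0$ produces (after $s=-z/\Lambda$) Kummer's equation with parameters $a=\gamma$, $b=1/\Lambda$; the analytic-at-$0$ solution $M(\gamma,1/\Lambda,-z/\Lambda)$ is positive on $[0,\infty)$ with the asymptotics $cz^{-\gamma}$, $c>0$, precisely when $\gamma<1/\Lambda$. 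The hypothesis $\rho<\tfrac{1-\beta}{2}$ only gives $\gamma<1/\lambda$ where $\lambda=\lim_{|y|\to\infty}D(\Phi')^2/\Phi=\tfrac{2-\beta}{1-\beta}$, and \eqref{prop of D} controls only this limit: a continuous positive $D$ satisfying \eqref{prop of D} can make $\Lambda=\sup_\R D(\Phi')^2/\Phi$ arbitrarily large (e.g.\ by a deep dip of $D$ on a short interval), in which case $\gamma<1/\Lambda$ fails, the leading coefficient $\Gamma(1/\Lambda)/\Gamma(1/\Lambda-\gamma)$ becomes negative, and your profile $G$ crosses zero. So the ``connection problem'' you flag is not merely unproven in your write-up; the constant-coefficient majorization you chose can genuinely lose it. A correct argument must either work with the variable-coefficient inequality directly (via an integral representation of the profile guaranteeing positivity, which is what \cite{SW2021} does) or exploit additional structure of the specific $D=\psi(\widetilde{H})^{-4}$ used in Lemma \ref{upper bound}.

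Two smaller points. First, your $\Phi(y)=\int_0^{|y|}(|y|-\sigma)D(\sigma)^{-1}\,d\sigma$ tacitly assumes $D$ even; for the lemma as stated you should take $\Phi(y)=\int_0^y(y-\sigma)D(\sigma)^{-1}\,d\sigma$, which is still nonnegative and satisfies $D\Phi''=1$. Second, the ``competing solution'' of your model equation at $z=\infty$ does not tend to a nonzero constant; it decays like $e^{-z/\Lambda}$ times a power, so the $z^{-\gamma}$ behaviour is the generic (dominant) one and the normalization must be imposed at $z=0$ via the indicial relation $G'(0)=-\gamma G(0)$ rather than at infinity. Neither of these affects the main gap above, but both would need fixing in a complete proof.
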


By Lemma \ref{supersol of Sobajima--Wakasugi}, we deduce the following upper bound for $e^{-tL}[\lr{\cdot}^{-1-\alpha}]$.

\begin{lemma}
	\label{upper bound}
	Assume that $-\frac{1}{2}<\alpha<\frac{1}{2}$ and $\delta\in\left(0,\frac{1+2\alpha}{2}\right)$.
	Then there exist a positive function $\supv\in C^2(\R\times[0,\infty))$, positive constants $C_{\textup{S},1}$ and $C_{\textup{S},2}$ such that for every $x\in\R$ and $t\ge0$,
	\begin{gather}
		\label{upper bound 1}
		\supv(x,t)\ge e^{-tL}[\lr{\cdot}^{-1-\alpha}](x),\\
		\label{upper bound 2}
		C_{\textup{S},1}\lr{x}^\alpha(1+\lr{x}^2+t)^{-\frac{1+2\alpha}{2}+\delta}\le\supv(x,t)\le C_{\textup{S},2}\lr{x}^\alpha(1+\lr{x}^2+t)^{-\frac{1+2\alpha}{2}+\delta}.
	\end{gather}
\end{lemma}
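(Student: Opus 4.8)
The plan is to transport the linear part of \eqref{main eq} to the equation $\partial_tv^*-D\partial_y^2v^*=0$ treated in Lemma \ref{supersol of Sobajima--Wakasugi} via the Liouville transform $v^*(H(x),t)=\psi(x)^{-1}v(x,t)$, build the supersolution $\overline{v^*}$ there, and pull it back. First I would record the change of variables: since $H'=\psi^{-2}>0$ and, because $\alpha<\frac12$, $\int_0^{\pm\infty}\psi(y)^{-2}\,dy=\pm\infty$, the map $H\colon\R\to\R$ is an increasing $C^2$-diffeomorphism; write $x(\cdot)=H^{-1}$. From \eqref{prop1 of psi} and a Stolz--Ces\`aro argument one gets $H(x)\sim c_0|x|^{1-2\alpha}$ as $x\to+\infty$ (and $H$ is odd), whence $\lr{H(x)}\asymp\lr{x}^{1-2\alpha}$ on all of $\R$ and $\lr{x(y)}\asymp\lr{y}^{1/(1-2\alpha)}$. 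Combining the harmonic transform behind \eqref{double parabolic} with the change of variables $y=H(x)$, a direct computation shows that $v$ solves $\partial_tv=\partial_x^2v-Vv$ on $\R\times(0,\infty)$ if and only if $v^*(y,t):=\psi(x(y))^{-1}v(x(y),t)$ solves $\partial_tv^*-D\partial_y^2v^*=0$ with $D(y):=\psi(x(y))^{-4}$; since $\psi\in C^2(\R)$ is even and positive, $D\in C(\R)$ is positive and even, and the above asymptotics yield \eqref{prop of D} with $\beta:=\frac{-4\alpha}{1-2\alpha}$ and some $D_0>0$, where $\beta<1$ precisely because $\alpha>-\frac12$.

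Given $\delta\in\left(0,\frac{1+2\alpha}{2}\right)$, set $\rho:=\frac{1+2\alpha-2\delta}{2(1-2\alpha)}$. Since $\frac{1-\beta}{2}=\frac{1+2\alpha}{2(1-2\alpha)}$, one checks $\rho\in\left(0,\frac{1-\beta}{2}\right)$, the two endpoints corresponding to $\delta=\frac{1+2\alpha}{2}$ and $\delta=0$. Lemma \ref{supersol of Sobajima--Wakasugi} applied to this $D$ and $\rho$ then provides $\overline{v^*}\in C^2(\R\times[0,\infty))$ satisfying \eqref{prop of Sobajima--Wakasugi}. Because $2-\beta=\frac{2}{1-2\alpha}$, we have $(1-2\alpha)(2-\beta)=2$ and $\frac{2\rho}{2-\beta}=\rho(1-2\alpha)=\frac{1+2\alpha}{2}-\delta$; combining these with $\psi(x)\asymp\lr{x}^\alpha$, $\lr{H(x)}^{2-\beta}\asymp\lr{x}^2$ and \eqref{prop of Sobajima--Wakasugi} gives
\[
\psi(x)\,\overline{v^*}(H(x),t)\asymp\lr{x}^\alpha\bigl(1+\lr{H(x)}^{2-\beta}+t\bigr)^{-\frac{2\rho}{2-\beta}}\asymp\lr{x}^\alpha\bigl(1+\lr{x}^2+t\bigr)^{-\frac{1+2\alpha}{2}+\delta}.
\]
Thus I would define $\supv(x,t):=C\psi(x)\overline{v^*}(H(x),t)$ for a constant $C\ge1$ fixed below; then $\supv$ is positive and $C^2$, and the displayed estimate yields \eqref{upper bound 2} with suitable $C_{\textup{S},1},C_{\textup{S},2}$.

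It remains to prove \eqref{upper bound 1}. Reversing the Liouville computation, $\partial_t\supv-\partial_x^2\supv+V\supv=C\psi(x)\bigl(\partial_t\overline{v^*}-D\partial_y^2\overline{v^*}\bigr)(H(x),t)\ge0$, so $\supv$ is a classical supersolution of $\partial_t-\partial_x^2+V$ on $\R\times(0,\infty)$. At $t=0$, the lower bound in \eqref{prop of Sobajima--Wakasugi}, the identity $\frac{2\rho}{2-\beta}=\frac{1+2\alpha}{2}-\delta$ and $\lr{x}\ge1$ give $\supv(x,0)\ge cC\lr{x}^{-1-\alpha+2\delta}\ge cC\lr{x}^{-1-\alpha}$, so $\supv(\cdot,0)\ge\lr{\cdot}^{-1-\alpha}=:u_0$ once $C$ is large. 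Since $u_0\in BC(\R)\cap L^2(\R)$ (here $\alpha>-\frac12$ is used), $e^{-tL}u_0$ is the bounded classical solution of the homogeneous linear equation with datum $u_0$, while $\supv$ is a supersolution of the same equation with a larger datum, bounded below and of at most polynomial growth; the parabolic comparison principle — valid because $V\in BC(\R)$, e.g.\ after multiplying by $e^{\|V\|_{L^\infty}t}$ to render the zero-order term nonnegative — then gives $\supv(\cdot,t)\ge e^{-tL}u_0=e^{-tL}[\lr{\cdot}^{-1-\alpha}]$, which is \eqref{upper bound 1}.

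The main obstacle is the bookkeeping of exponents under the Liouville change of variables: the scale-invariant choice is exactly $\rho=\frac{1-\beta}{2}$, which Lemma \ref{supersol of Sobajima--Wakasugi} excludes, so the freedom $\delta>0$ is precisely what is needed to place $\rho$ in the admissible range. A secondary point is justifying the comparison principle for the non-uniformly-elliptic pulled-back operator $D\partial_y^2$ (equivalently, for $\partial_t-\partial_x^2+V$ against the unbounded-but-polynomially-bounded supersolution $\supv$), which is standard since $V$ is bounded and $\supv$ is bounded from below.
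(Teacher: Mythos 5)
Your proposal is correct and follows essentially the same route as the paper: the Liouville transform $y=H(x)$ with $D=\psi(H^{-1})^{-4}$, $\beta=-\frac{4\alpha}{1-2\alpha}$, the choice of $\rho$ (your $\rho=\frac{1+2\alpha-2\delta}{2(1-2\alpha)}$ equals the paper's $\frac{1-\beta}{2}-\frac{2-\beta}{2}\delta$), Lemma \ref{supersol of Sobajima--Wakasugi}, and the identity $\frac{2\rho}{2-\beta}=\frac{1+2\alpha}{2}-\delta$ together with $\lr{H(x)}^{2-\beta}\asymp\lr{x}^2$. The only cosmetic difference is that the paper transports $v=e^{-tL}[\lr{\cdot}^{-1-\alpha}]$ forward and compares $v^*\le C\overline{v^*}$ in the $y$-variables, whereas you pull $\overline{v^*}$ back and compare in the $x$-variables; the two are equivalent.
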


\begin{proof}
	We set $H\colon\R\to\R$ as
	\[
	H(x)=\int_0^x\psi(y)^{-2}\,dy,\quad x\in\R,
	\]
	then we see that $H$ is bijective and $|H(x)|\to\infty$ as $|x|\to\infty$.
	We define a positive function $D\in C(\R)$ by $D=\psi(\widetilde{H})^{-4}$, where $\widetilde{H}=H^{-1}$, and $\beta=-\frac{4\alpha}{1-2\alpha}<1$.
	It follows from the l'H\^{o}pital rule and \eqref{prop1 of psi} that $D$ and $\beta$ satisfy \eqref{prop of D} for some positive constant $D_0$.
	We set $\rho=\frac{1-\beta}{2}-\frac{2-\beta}{2}\delta$, then noting that $\frac{1-\beta}{2-\beta}=\frac{1+2\alpha}{2}$ and $2-\beta=\frac{2}{1-2\alpha}$, we have $\rho\in\left(0,\frac{1-\beta}{2}\right)$.
	Therefore, by Lemma \ref{supersol of Sobajima--Wakasugi}, there exists a supersolution $\overline{v^*}\in C^2(\R\times[0,\infty))$ to \eqref{Sobajima--Wakasugi eq} satisfying \eqref{prop of Sobajima--Wakasugi}.
	We set $v(t)=e^{-tL}[\lr{\cdot}^{-1-\alpha}]$ and
	\[
	v^*(y,t)=\psi(\widetilde{H}(y))^{-1}v(\widetilde{H}(y),t),\quad y\in\R,\ t\ge0,
	\]
	then we see that $v^*$ satisfies
	\[
	\begin{cases}
		\pa_tv^*-D\pa_y^2v^*=0,&y\in\R,\ t>0,\\
		v^*(y,0)\le C\lr{y}^{-2\rho},&y\in\R.
	\end{cases}
	\]
	Since $\overline{v^*}$ is a supersolution to \eqref{Sobajima--Wakasugi eq}, we obtain $v^*\le C_\text{S}^\prime\overline{v^*}$ for some positive constant $C_\text{S}^\prime$, that is, for every $x\in\R$ and $t\ge0$,
	\[
	C_\text{S}^\prime\psi(x)\overline{v^*}(H(x),t)\ge v(x,t).
	\]
	Noting that there exist positive constants $C_{\textup{S},1}^\prime,C_{\textup{S},2}^\prime$ such that $C_{\textup{S},1}^\prime\lr{x}^2\le\lr{H(x)}^{2-\beta}\le C_{\textup{S},2}^\prime\lr{x}^2$ for every $x\in\R$, by \eqref{prop of Sobajima--Wakasugi}, we get for every $x\in\R$ and $t\ge0$,
	\[
	C\lr{x}^\alpha(1+\lr{x}^2+t)^{-\frac{1+2\alpha}{2}+\delta}\le C_\text{S}^\prime\psi(x)\overline{v^*}(H(x),t)\le C\lr{x}^\alpha(1+\lr{x}^2+t)^{-\frac{1+2\alpha}{2}+\delta}.
	\]
	Setting a positive function $\supv\in C^2(\R\times[0,\infty))$ as
	\[
	\supv(x,t)=C_\text{S}^\prime\psi(x)\overline{v^*}(H(x),t),\quad x\in\R,\
	t\ge0,
	\]
	we have the desired result. 
\end{proof}

\begin{lemma}
	\label{case III}
	In the case \textbf{\textup{(\III)}}, \eqref{main eq} possesses a nontrivial global-in-time solution for some initial data.
\end{lemma}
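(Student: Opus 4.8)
The plan is to exhibit an explicit supersolution built from the function $\supv$ of Lemma \ref{upper bound} and then invoke the supersolution method, Lemma \ref{supersol method}.

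First, note that if $\alpha\le-\frac12$ then $p_*(\alpha,m)=\infty$ by \eqref{def of p_*}, so no $p$ with $p>p_*(\alpha,m)$ exists and there is nothing to prove; hence we may assume $-\frac12<\alpha<0$ and $p>p_*(\alpha,m)$. Choose $\delta\in\left(0,\frac{1+2\alpha}{2}\right)$ small (its size is fixed below) and apply Lemma \ref{upper bound} to obtain a positive $\supv\in C^2(\R\times[0,\infty))$ satisfying \eqref{upper bound 1} and \eqref{upper bound 2}. Put $\mu=\frac{1+2\alpha}{2}-\delta$, so that $0<\mu<\frac12$ and, by \eqref{upper bound 2}, $\supv(x,t)$ is comparable to $\lr{x}^\alpha(1+\lr{x}^2+t)^{-\mu}$. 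As initial datum take $u_0=\ep\lr{\cdot}^{-1-\alpha}$ with $\ep>0$ small: since $-1-\alpha<-\frac12<0$ this lies in $BC(\R)\backslash\{0\}$ and is nonnegative, and \eqref{upper bound 1} gives $e^{-tL}u_0\le\ep\supv(t)$ for all $t\ge0$. The candidate supersolution will be $\supu=2\ep\supv$, which is positive and, since $\lr{x}^\alpha\le1$ and $(1+\lr{x}^2+t)^{-\mu}\le1$, belongs to $BC(\R\times(0,\infty))$.

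The core of the argument is the pointwise bound
\[
\lr{x}^{-m}\supv(x,s)^p\le C(1+s)^{-\gamma}\lr{x}^{-1-\alpha},\qquad x\in\R,\ s\ge0,
\]
for some $\gamma>1$. By \eqref{upper bound 2} this reduces to $\lr{x}^{A}(1+\lr{x}^2+s)^{-\mu p}\le C(1+s)^{-\gamma}$ with $A:=\alpha(p+1)+1-m$; treating the regions $\lr{x}^2\le s$ and $\lr{x}^2\ge s$ separately shows it is enough that $\gamma\le\mu p-\frac{[A]_+}{2}$. If $A\le0$ this quantity equals $\mu p$, and $p>p_*(\alpha,m)\ge\frac{2}{1+2\alpha}$ forces $\mu p>1$ once $\delta$ is small. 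If $A>0$ (which entails $m<1$ because $\alpha<0$) it equals $\frac{(p-1)(1+\alpha)+m}{2}-\delta p$, and $p>p_*(\alpha,m)\ge1+\frac{[2-m]_+}{1+\alpha}$ forces it to exceed $1$ once $\delta$ is small; here one uses the definition of $\alpha_*(1)$, which yields $\frac{2}{1+2\alpha}\ge1+\frac{[2-m]_+}{1+\alpha}$ whenever $\alpha\le\alpha_*(1)$, so that in every subcase $p>p_*(\alpha,m)$ supplies exactly the inequality needed. Fix $\delta$ accordingly and then $\gamma\in\left(1,\mu p-\frac{[A]_+}{2}\right]$.

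Granting this, the positivity preserving property (Lemma \ref{contr and posi} \textbf{(iii)}) together with \eqref{upper bound 1} gives
\[
\int_0^te^{-(t-s)L}[\lr{\cdot}^{-m}\supv(s)^p]\,ds\le C\int_0^t(1+s)^{-\gamma}\supv(t-s)\,ds\le C_1\supv(t),
\]
where the last step is the convolution estimate $\int_0^t(1+s)^{-\gamma}\supv(x,t-s)\,ds\le C_1\supv(x,t)$, proved by distinguishing $\lr{x}^2\ge t$, where $1+\lr{x}^2+(t-s)$ is comparable to $1+\lr{x}^2+t$ uniformly in $s\in[0,t]$, from $\lr{x}^2<t$, where one splits the $s$-integral at $t/2$ and uses $\gamma>1$ and $0<\mu<1$. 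Consequently
\[
e^{-tL}u_0+\int_0^te^{-(t-s)L}[\lr{\cdot}^{-m}\supu(s)^p]\,ds\le\bigl(\ep+C_1(2\ep)^p\bigr)\supv(t)\le2\ep\supv(t)=\supu(t)
\]
as soon as $\ep$ is small enough that $C_1(2\ep)^{p-1}\le1$, so $\supu$ fulfils the hypothesis of Lemma \ref{supersol method}, which produces a global-in-time solution of \eqref{main eq}, nontrivial because $u_0\ne0$.

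The step I expect to be the main obstacle is the bookkeeping of exponents: one must check that, after optimizing $\delta$, the requirement $\gamma>1$ is equivalent to $p>p_*(\alpha,m)$ in each of the regimes $A\le0$ and $A>0$, and that the convolution inequality for $\supv$ holds uniformly in $x$ — the latter being why a naive monotonicity argument in $t$ does not suffice and the split according to the size of $\lr{x}^2$ relative to $t$ is required.
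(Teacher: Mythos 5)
Your proposal is correct and follows essentially the same route as the paper: the supersolution $\supv$ from Lemma \ref{upper bound}, the pointwise bound $\lr{x}^{-m}\supv(x,s)^p\le C(1+s)^{-\gamma}\lr{x}^{-1-\alpha}$ with $\gamma>1$ (your case split $A\lessgtr0$ with $A=\alpha(p+1)+1-m$ is exactly the paper's split $m\lessgtr1+\alpha+\alpha p$), the same convolution estimate obtained by distinguishing $\lr{x}^2\gtrless t$, and the same small-$\ep$ closure via Lemma \ref{supersol method}. The only (welcome) additions are your explicit dismissal of $\alpha\le-\tfrac12$ and your verification that $p>p_*(\alpha,m)$ supplies the exponent inequalities, which the paper leaves implicit in its choice of $\delta$.
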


\begin{proof}
	We fix $\delta\in\left(0,\frac{1+2\alpha}{2}\right)$ satisfying $\frac{1+\alpha}{2}(p-1)+\frac{m}{2}-1>\delta(p+1)$ and $\frac{1+2\alpha}{2}p-1>\delta(p+1)$.
	By Lemma \ref{upper bound}, there exists a positive function $\supv\in C^2(\R\times[0,\infty))$ satisfying \eqref{upper bound 1} and \eqref{upper bound 2}.
	It follows from \eqref{upper bound 2} that for every $y\in\R$ and $s\ge0$,
	\[
	\begin{split}
		&\lr{y}^{-m}\supv(y,s)^p\\
		&\le C\lr{y}^{\alpha p-m}(1+\lr{y}^2+s)^{-\frac{1+2\alpha}{2}p+\delta p}\\
		&\le C
		\begin{cases}
		\lr{y}^{\alpha p-m}(1+s)^{-\frac{1+\alpha}{2}(p-1)-\frac{m}{2}+\delta p}(1+\lr{y}^2+s)^{-\frac{1+\alpha}{2}-\frac{\alpha}{2}p+\frac{m}{2}},&\text{if}\ m\le1+\alpha+\alpha p,\\
		\lr{y}^{-1-\alpha}(1+s)^{-\frac{1+2\alpha}{2}p+\delta p},&\text{if}\ m>1+\alpha+\alpha p
		\end{cases}
		\\
		&\le C\lr{y}^{-1-\alpha}(1+s)^{-1-\delta}.
	\end{split}
	\]
	This together with \eqref{upper bound 1} implies that for every $x\in\R$ and $t>0$,
	\begin{equation}
		\label{ineq III1}
		\begin{split}
		\int_0^te^{-(t-s)L}[\lr{\cdot}^{-m}\supv(s)^p](x)\,ds&\le C\int_0^t(1+s)^{-1-\delta}e^{-(t-s)L}[\lr{\cdot}^{-1-\alpha}](x)\,ds\\
		&\le C\lr{x}^\alpha\int_0^t(1+s)^{-1-\delta}(1+\lr{x}^2+t-s)^{-\frac{1+2\alpha}{2}+\delta}\,ds.
		\end{split}
	\end{equation}
	It follows that for every $x\in\R$ and $t>0$,
	\[
		\begin{split}
		&\int_0^t(1+s)^{-1-\delta}(1+\lr{x}^2+t-s)^{-\frac{1+2\alpha}{2}+\delta}\,ds\\
		&\le
		\begin{cases}
			\int_0^t(1+s)^{-1-\delta}(1+t-s)^{-\frac{1+2\alpha}{2}+\delta}\,ds,&\text{if}\ |x|\le\sqrt{t},\\
			\lr{x}^{-1-2\alpha+2\delta}\int_0^t(1+s)^{-1-\delta}\,ds,&\text{if}\ |x|>\sqrt{t}
		\end{cases}
		\\
		&\le C
		\begin{cases}
			(1+t)^{-\frac{1+2\alpha}{2}+\delta},&\text{if}\ |x|\le\sqrt{t},\\
			\lr{x}^{-1-2\alpha+2\delta},&\text{if}\ |x|>\sqrt{t}
		\end{cases}
		\\
		&\le C(1+\lr{x}^2+t)^{-\frac{1+2\alpha}{2}+\delta}.
		\end{split}
	\]
	Combining the above inequality with \eqref{ineq III1} and \eqref{upper bound 2}, we have for every $x\in\R$ and $t>0$,
	\begin{equation}
		\label{ineq III3}
		\int_0^te^{-(t-s)L}[\lr{\cdot}^{-m}\supv(s)^p](x)\,ds\le C\lr{x}^\alpha(1+\lr{x}^2+t)^{-\frac{1+2\alpha}{2}+\delta}\le C\supv(x,t).
	\end{equation}
	We fix a sufficiently small constant $\ep>0$ and set a positive function $\supu\in BC(\R\times(0,\infty))$ as $\supu=\ep\supv$.
	By \eqref{upper bound 1} and \eqref{ineq III3}, we obtain for every $x\in\R$ and $t>0$,
	\[
	\begin{split}
		e^{-tL}\left[\frac{\ep}{2}\lr{\cdot}^{-\alpha-1}\right](x)+\int_0^te^{-(t-s)L}[\lr{\cdot}^{-m}\supu(s)^p](x)\,ds&\le\frac{1}{2}\supu(x,t)+C\ep^{p-1}\supu(x,t)\\
		&\le\supu(x,t).
	\end{split}
	\]
	Therefore, by Lemma \ref{supersol method}, we see that \eqref{main eq} possesses a nontrivial global-in-time solution.
	The proof is complete.
\end{proof}

Finally, we deduce the desired result.

\begin{proof}[\textbf{\textup{Proof of Theorem \ref{main thm} (ii)}}]
	Combining Lemma \ref{case I}, Lemma \ref{case II} and Lemma \ref{case III}, we have the desired result.
\end{proof}

\section{Nonexistence of nontrivial global-in-time solutions}
\label{section4}
In this section, we prove Theorem \ref{main thm} \textbf{(i)}.
To this aim, we consider the following four cases (cf. Table \ref{table2}):
\begin{enumerate}
	\item[\textbf{(A)}] $\alpha\ge\alpha_*(1)$ and $1<p\le1+\frac{[2-m]_+}{1+\alpha}$;
	\item[\textbf{(B)}] $-\frac{1}{2}<\alpha<\frac{1}{2}$ and $1<p<\frac{2}{1+2\alpha}$;
	\item[\textbf{(C)}] $\alpha\le-\frac{1}{2}$ and $p>1$;
	\item[\textbf{(D)}] $-\frac{1}{2}<\alpha<\frac{1}{2}$ and $p=\frac{2}{1+2\alpha}$.
\end{enumerate}
We start by describing the definition and the property of the test function.
We fix two functions $\eta\in C^\infty([0,\infty))$ and $\eta^*\in L^\infty(0,\infty)$ satisfying
\[
\eta(s)
\begin{cases}
	=1,&\text{if}\ 0\le s\le\frac{1}{2},\\
	\text{is decreasing},&\text{if}\ \frac{1}{2}<s<1,\\
	=0,&\text{if}\ s\ge1,
\end{cases}
\quad\eta^*(s)=
\begin{cases}
	0,&\text{if}\ 0\le s\le\frac{1}{2},\\
	\eta(s),&\text{if}\ s>\frac{1}{2},
\end{cases}
\]
and set for $R>0$, $x\in\R$ and $t\ge0$,
\[
\xi_R(x,t)=\frac{|x|^2+t}{R},\quad\Phi_R(x,t)=\eta(\xi_R(x,t))^{2p^\prime},\quad\Phi_R^*(x,t)=\eta^*(\xi_R(x,t))^{2p^\prime},
\]
where $p^\prime=\frac{p}{p-1}$ is the H\"{o}lder conjugate of $p$.
In the proof of Theorem \ref{main thm} \textbf{(i)}, we use the function $\psi\Phi_R$ as a test function.
The following lemma describes the property of the test function used in the test function method.

\begin{lemma}
	\label{prop of test func}
	For every $R>0$, there exists a positive constant $C_\textup{T}$ such that
	\begin{equation}
		|\pa_t(\psi\Phi_R)|+|L(\psi\Phi_R)|\le C_\textup{T}R^{-1}\psi[\Phi_R^*]^\frac{1}{p}\quad\text{in}\ \R\times[0,\infty).
	\end{equation}
\end{lemma}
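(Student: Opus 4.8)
The plan is to compute $L(\psi\Phi_R)$ and $\pa_t(\psi\Phi_R)$ explicitly, use the identity $V=\psi^{\prime\prime}/\psi$ to cancel the worst term, and then reduce everything to pointwise bounds on the derivatives of the cutoff $\eta(\xi_R)$. Writing $f^\prime$ for $\pa_xf$, one has
\begin{align*}
	L(\psi\Phi_R)&=-(\psi\Phi_R)^{\prime\prime}+V\psi\Phi_R\\
	&=-\psi^{\prime\prime}\Phi_R-2\psi^\prime\pa_x\Phi_R-\psi\pa_x^2\Phi_R+\psi^{\prime\prime}\Phi_R\\
	&=-2\psi^\prime\pa_x\Phi_R-\psi\pa_x^2\Phi_R,
\end{align*}
so the term carrying $V$ cancels and only first- and second-order $x$-derivatives of $\Phi_R$ survive; likewise $\pa_t(\psi\Phi_R)=\psi\,\pa_t\Phi_R$. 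Hence it suffices to bound each of $\psi|\pa_t\Phi_R|$, $|\psi^\prime\pa_x\Phi_R|$ and $\psi|\pa_x^2\Phi_R|$ by $C_\textup{T}R^{-1}\psi[\Phi_R^*]^{1/p}$.

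Next I would differentiate $\Phi_R=\eta(\xi_R)^{2p^\prime}$ by the chain rule, using $\pa_t\xi_R=R^{-1}$, $\pa_x\xi_R=2xR^{-1}$, $\pa_x^2\xi_R=2R^{-1}$. This produces a finite sum of terms of the form $\eta(\xi_R)^{2p^\prime-1}\eta^\prime(\xi_R)$, $\eta(\xi_R)^{2p^\prime-2}\eta^\prime(\xi_R)^2$ and $\eta(\xi_R)^{2p^\prime-1}\eta^{\prime\prime}(\xi_R)$, multiplied respectively by $R^{-1}$, $x^2R^{-2}$ or $xR^{-1}$. The key observation is that $\eta^\prime(\xi_R)$ and $\eta^{\prime\prime}(\xi_R)$ are supported where $\frac12\le\xi_R\le1$, i.e.\ where $\frac{R}{2}\le x^2+t\le R$; on that set $x^2\le R$, so $x^2R^{-2}\le R^{-1}$, and on the (open) subset where those derivatives are actually nonzero one has $\frac12<\xi_R<1$ (because $\eta$ is smooth and identically $1$ on $[0,\frac12]$ and identically $0$ on $[1,\infty)$), hence $\eta^*(\xi_R)=\eta(\xi_R)$ there. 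Since $0\le\eta\le1$ and $2p^\prime-2>0$, each of the three displayed factors is dominated by a constant (depending only on $\|\eta^\prime\|_{L^\infty}$, $\|\eta^{\prime\prime}\|_{L^\infty}$) times $\eta(\xi_R)^{2p^\prime-2}$; so $\psi|\pa_t\Phi_R|+\psi|\pa_x^2\Phi_R|\le CR^{-1}\psi\,\eta(\xi_R)^{2p^\prime-2}$ at once.

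The one term needing care is $\psi^\prime\pa_x\Phi_R$, since $\pa_x\Phi_R$ carries only one factor $R^{-1}$ together with a factor $x$, and the size of $\psi^\prime$ near $x=0$ must be controlled. Here I would invoke \eqref{prop2 of psi}: writing $\psi^\prime\pa_x\Phi_R$ as a numerical multiple of $R^{-1}(x\psi^\prime)\,\eta(\xi_R)^{2p^\prime-1}\eta^\prime(\xi_R)$ and using $|x\psi^\prime(x)|\le C\psi(x)$ for all $x\in\R$ gives $|\psi^\prime\pa_x\Phi_R|\le CR^{-1}\psi\,\eta(\xi_R)^{2p^\prime-2}$ uniformly, including across the origin. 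To conclude, I would use the arithmetic identity $2p^\prime-2=\frac{2}{p-1}=\frac{2p^\prime}{p}$, which shows that wherever the derivative terms do not vanish one has $\eta(\xi_R)^{2p^\prime-2}=\eta^*(\xi_R)^{2p^\prime/p}=[\Phi_R^*]^{1/p}$, while elsewhere the left-hand side of the asserted inequality is zero and the right-hand side nonnegative; assembling the three bounds yields the claim. The only preliminary worth noting is that $\Phi_R\in C^2(\R\times[0,\infty))$, which holds because $\eta\in C^\infty$ and $2p^\prime\ge2$, legitimizing the chain-rule computation; I expect the bookkeeping with these exponents and the near-origin estimate of $\psi^\prime$ to be the only non-mechanical points.
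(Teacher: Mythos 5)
Your proposal is correct and follows essentially the same route as the paper, whose proof is just the "simple calculation" you carry out in detail: the cancellation $-\psi''\Phi_R+V\psi\Phi_R=0$ via \eqref{eq of psi}, the bound $x^2R^{-2}\le R^{-1}$ on the support of $\eta'(\xi_R)$, the use of \eqref{prop2 of psi} to absorb $x\psi'$ into $C\psi$ (visible in the paper's constant through the factor $\|\eta\eta'\tfrac{x\psi'}{\psi}\|_{L^\infty}$), and the exponent identity $2p'-2=2p'/p$ identifying $\eta^*(\xi_R)^{2p'-2}$ with $[\Phi_R^*]^{1/p}$. No gaps.
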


\begin{proof}
	By simple calculation, we have
	\[
	\begin{split}
		|\pa_t(\psi\Phi_R)|\le2p^\prime\|\eta\eta^\prime\|_{L^\infty}R^{-1}\psi\eta^*(\xi_R)^{2p^\prime-2}&\quad\text{in}\ \R\times[0,\infty),\\
		|L(\psi\Phi_R)|\le C_\textup{T}^\prime R^{-1}\psi\eta^*(\xi_R)^{2p^\prime-2}&\quad\text{in}\ \R\times[0,\infty),
	\end{split}	
	\]
	where $C_\textup{T}^\prime=8p^\prime\left\|\eta\eta^\prime\frac{x\psi^\prime}{\psi}\right\|_{L^\infty}+4p^\prime\|\eta\eta^\prime\|_{L^\infty}+8p^\prime(2p^\prime-1)\|\eta\|_{L^\infty}^2+4p^\prime\|\eta\eta^{\prime\prime}\|_{L^\infty}$.
	The proof is complete.
\end{proof}

First, we prove the case \textbf{(A)}.

\begin{lemma}
	\label{case A}
	In the case \textbf{\textup{(A)}}, \eqref{main eq} does not have nontrivial global-in-time solutions.
\end{lemma}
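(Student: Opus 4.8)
The plan is to apply the test function method (TFM) directly, using $\psi\Phi_R$ as the test function and exploiting the hypothesis $1<p\le 1+\frac{[2-m]_+}{1+\alpha}$ in case \textbf{(A)}. Suppose for contradiction that $u$ is a nontrivial global-in-time solution to \eqref{main eq}. First I would multiply the equation $\pa_tu-\pa_x^2u+Vu=\lr{x}^{-m}u^p$ by $\psi\Phi_R$ and integrate over $\R\times(0,\infty)$. Since $\psi\Phi_R$ has compact support in $x$ (because $\eta(\xi_R)$ vanishes for $|x|^2+t\ge R$) and is smooth, integration by parts is justified, and the term involving $-\pa_x^2u+Vu$ transfers onto the test function: using $V=\psi''/\psi$, one checks that $-\pa_x^2(\psi\Phi_R)+V(\psi\Phi_R)=-\psi\pa_x^2\Phi_R-2\psi'\pa_x\Phi_R$, which is exactly $L(\psi\Phi_R)$ up to the self-adjoint transfer. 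Combining both contributions, I obtain the inequality
\[
\int_0^\infty\!\!\int_\R\lr{x}^{-m}u^p\,\psi\Phi_R\,dx\,dt\le\int_\R u_0\psi\Phi_R(\cdot,0)\,dx+\int_0^\infty\!\!\int_\R u\bigl(|\pa_t(\psi\Phi_R)|+|L(\psi\Phi_R)|\bigr)\,dx\,dt.
\]

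Next I would drop the (nonnegative) initial-data term after first taking $R$ large enough that it contributes a fixed positive constant coming from nontriviality of $u_0$ — or, more cleanly, keep it and note it is bounded by a constant; then estimate the error term via Lemma \ref{prop of test func}, which gives $|\pa_t(\psi\Phi_R)|+|L(\psi\Phi_R)|\le C_\textup{T}R^{-1}\psi[\Phi_R^*]^{1/p}$. Applying Young's inequality with exponents $p$ and $p'$ to the product $u\cdot R^{-1}\psi[\Phi_R^*]^{1/p}=\bigl(u\lr{x}^{-m/p}[\Phi_R^*]^{1/p}\bigr)\cdot\bigl(R^{-1}\lr{x}^{m/p}\psi\bigr)$, the first factor is absorbed into the left-hand side (with a small constant, using $\Phi_R^*\le\Phi_R$ on the support), and I am left to control
\[
R^{-p'}\int_{\frac{R}{2}\le|x|^2+t\le R}\lr{x}^{mp'/p}\,\psi(x)\,dx\,dt.
\]
Since $\psi(x)\sim\psi_0|x|^\alpha$ and on the relevant annulus $|x|\lesssim R^{1/2}$, $t\lesssim R$, this integral is bounded by $CR^{-p'}\cdot R^{\,(mp'/p+\alpha+1)/2}\cdot R = CR^{\,1-p'+(mp'/p+\alpha+1)/2}$ (the factor $R$ from the $t$-length, $R^{(\cdots+1)/2}$ from the $x$-integral of $\lr{x}^{mp'/p+\alpha}$). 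The exponent is $\le 0$ precisely when $2-2p'+mp'/p+\alpha+1\le0$, i.e. when $p\le 1+\frac{2-m}{1+\alpha}$ (after using $p'=p/(p-1)$ and simplifying); this is exactly the hypothesis $p\le1+\frac{[2-m]_+}{1+\alpha}$ in the regime $m<2$, and the case $m\ge2$ is vacuous since then $p_*(\alpha,m)=1$. Letting $R\to\infty$, the right-hand side stays bounded (or tends to $0$ in the strict case), so $\int_0^\infty\!\!\int_\R\lr{x}^{-m}u^p\psi\,dx\,dt<\infty$; in the critical equality case one then reruns the estimate keeping the annular localization and uses dominated convergence on $\int_{|x|^2+t\le R/2}\lr{x}^{-m}u^p\psi$ to force this integral to zero, hence $u\equiv0$, a contradiction.

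The main obstacle I anticipate is the borderline case $p=1+\frac{2-m}{1+\alpha}$ with $\alpha\ge\alpha_*(1)$, where the scaling exponent is exactly zero and the crude bound only gives boundedness, not decay, of the error term. Handling it requires the standard refinement: from the first (non-critical-looking) application one extracts $\iint\lr{x}^{-m}u^p\psi<\infty$, then in the Young step one estimates the error integral only over the annulus $\{R/2\le|x|^2+t\le R\}$ (where $\Phi_R^*$ is supported) rather than the whole ball, so that the right-hand side is $C\bigl(\iint_{\text{annulus}}\lr{x}^{-m}u^p\psi\bigr)^{1/p}\to0$ by absolute continuity of the integral. This pins $\iint\lr{x}^{-m}u^p\psi=0$ and kills $u$. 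A secondary technical point is justifying the integration by parts and the passage $R\to\infty$ given only that $u\in C^{2;1}\cap C$ with $u(t)\in BC(\R)$; this is routine because $\psi\Phi_R$ is compactly supported in $x$ for each fixed $t\le R$ and the $t$-integration is over the bounded interval $[0,R]$.
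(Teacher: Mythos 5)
Your proposal is correct and follows essentially the same route as the paper: the test function method with $\psi\Phi_R$, the identical scaling computation yielding the condition $p\le 1+\frac{2-m}{1+\alpha}$, and the same two-step treatment of the borderline case (first extract $\iint\lr{x}^{-m}u^p\psi\,dx\,dt<\infty$, then use the annular support of $\Phi_R^*$ to send the error term to zero); the paper uses H\"older where you use Young, which is an interchangeable detail. One cosmetic slip: in your Young splitting the weight should be distributed as $\psi=\psi^{1/p}\cdot\psi^{1/p'}$ so that the absorbed term carries the factor $\psi$ matching the left-hand side (otherwise absorption fails when $\alpha<0$) — your subsequent error integrand $\lr{x}^{mp'/p}\psi$ already corresponds to this corrected splitting.
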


\begin{proof}
	Assume that there exists a global-in-time solution to \eqref{main eq} for some initial data $u_0\neq0$.
	Since $u_0$ is continuous and nonnegative, we can choose $\ep>0$ and $R_0>0$ such that for every $R\ge R_0$,
	\[
	\int_\R u_0\Phi_R(\cdot,0)\,dx\ge\ep.
	\]
	We fix $R\ge R_0$.
	Multiplying $\psi\Phi_R$ to the equation in \eqref{main eq} and integrating it over $\R\times[0,\infty)$, we see from integration by parts and Lemma \ref{prop of test func} that
	\begin{equation}
		\label{ineq A1}
		\begin{split}
			&\int_0^\infty\int_\R\lr{x}^{-m}u^p\psi\Phi_R\,dx\,dt\\
			&=\int_0^\infty\int_\R(\pa_tu-\pa_x^2u+Vu)\psi\Phi_R\,dx\,dt\\
			&=\int_0^\infty\frac{d}{dt}\left(\int_\R u\psi\Phi_R\,dx\right)\,dt+\int_0^\infty\int_\R u(-\pa_t(\psi\Phi_R)+L(\psi\Phi_R))\,dx\,dt\\
			&\le-\int_\R u_0\Phi_R(\cdot,0)\,dx+CR^{-1}\int_0^\infty\int_\R u\psi[\Phi_R^*]^\frac{1}{p}\,dx\,dt.
		\end{split}
	\end{equation}
	By the H\"{o}lder inequality, we have
	\[
		\begin{split}
			&R^{-1}\int_0^\infty\int_\R u\psi[\Phi_R^*]^{\frac{1}{p}}\,dx\,dt\\
			&\le R^{-1}\left(\int_0^R\int_{-\sqrt{R}}^{\sqrt{R}}\lr{x}^{\frac{mp^\prime}{p}}\psi\,dx\,dt\right)^{\frac{1}{p^\prime}}\left(\int_0^\infty\int_\R\lr{x}^{-m}u^p\psi\Phi_R^*\,dx\,dt\right)^{\frac{1}{p}}\\
			&\le CR^{-1+\frac{3+\alpha}{2p^\prime}+\frac{m}{2p}}\left(\int_0^\infty\int_\R\lr{x}^{-m}u^p\psi\Phi_R^*\,dx\,dt\right)^{\frac{1}{p}}\\
			&\le C\left(\int_0^\infty\int_\R\lr{x}^{-m}u^p\psi\Phi_R^*\,dx\,dt\right)^{\frac{1}{p}}.
		\end{split}
	\]
	Combining the above inequality with \eqref{ineq A1}, we obtain
	\[
	\int_\R u_0\Phi_R(\cdot,0)\,dx+\int_0^\infty\int_\R\lr{x}^{-m}u^p\psi\Phi_R\,dx\,dt\le C\left(\int_0^\infty\int_\R\lr{x}^{-m}u^p\psi\Phi_R^*\,dx\,dt\right)^{\frac{1}{p}}.
	\]
	This together with the Fatou lemma implies that
	\[
	\int_0^\infty\int_\R\lr{x}^{-m}u^p\psi\,dx\,dt\le\liminf_{R\to\infty}\int_0^\infty\int_\R\lr{x}^{-m}u^p\psi\Phi_R\,dx\,dt<\infty.
	\]
	Therefore, by the dominated convergence theorem, we get
	\[
	0<\ep\le\liminf_{R\to\infty}\int_\R u_0\Phi_R(\cdot,0)\,dx\le C\lim_{R\to\infty}\left(\int_0^\infty\int_\R\lr{x}^{-m}u^p\psi\Phi_R^*\,dx\,dt\right)^{\frac{1}{p}}=0,
	\]
	which is a contradiction.
	The proof is complete.
\end{proof}

Here we consider the cases \textbf{(B)}, \textbf{(C)} and \textbf{(D)}.
Without loss of generality, we may assume that $u_0(0)>0$.
By the comparison principle, it suffices to prove the global nonexistence of the following Cauchy problem:
\begin{equation}
	\label{main eq alt}
	\begin{cases}
		\pa_tu-\pa_x^2u+Vu=au^p,&x\in\R,\ t>0,\\
		u(x,0)=u_0^-(x)\ge0,&x\in\R,
	\end{cases}
\end{equation}
where $a,u_0^-\in C_0^\infty(\R)\backslash\{0\}$ are even functions which satisfy for every $x\in\R$,
\begin{align}
	\label{prop of a}
	0\le a(x)\le\lr{x}^{-m},&\quad x(a(x)\psi(x)^{p-1})^{\prime}\le0,\\
	\label{prop of u_0^-}
	0\le u_0^-(x)\le u_0(x),&\quad x(\psi(x)^{-1}u_0^-(x))^{\prime}\le0.
\end{align}
The following lemma describes a modified version of the method used in \cite{P1997} and plays an important role in the proof of the cases \textbf{(B)}, \textbf{(C)} and \textbf{(D)}.

\begin{lemma}
	\label{Pinsky method}
	Assume that $u$ is a global-in-time solution to \eqref{main eq alt}.
	We set $u_*=\psi^{-1}u$.
	Then, for every $t\ge0$, $u_*(t)$ is nonincreasing for $x\in[0,\infty)$ and nondecreasing for $x\in(-\infty,0]$.
	Similarly, for every $t\ge0$, $v_*(t)=\psi^{-1}(e^{-tL}u_0^-)$ is also nonincreasing for $x\in[0,\infty)$ and nondecreasing for $x\in(-\infty,0]$.
\end{lemma}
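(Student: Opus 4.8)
The plan is to use the ``double parabolic'' reformulation already recorded in \eqref{double parabolic}. Writing $b=\tfrac{2\psi^\prime}{\psi}$, the harmonic transform turns \eqref{main eq alt} into
\[
\pa_tu_*=\pa_x^2u_*+b\,\pa_xu_*+a\psi^{p-1}u_*^p\quad\text{in }\R\times(0,\infty),
\]
and $v_*=\psi^{-1}(e^{-tL}u_0^-)$ solves the same equation with the term $a\psi^{p-1}u_*^p$ deleted. Since $V=\psi^{\prime\prime}/\psi$, $a$ and $u_0^-$ are even, the uniqueness part of Lemma \ref{blow-up alt} (applied to \eqref{main eq alt}) forces $u$, hence $u_*$, to be even in $x$; in particular $\pa_xu_*(0,t)=0$ for every $t\ge0$. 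Moreover $u\ge0$ by the comparison principle, so $u_*\ge0$, and by standard parabolic regularity (the initial datum $u_0^-$ is smooth and compactly supported) $u_*\in C^\infty(\R\times(0,\infty))$ with its first $x$-derivative continuous up to $t=0$; thus the differentiations below are legitimate. The case $t=0$ of the assertion is just \eqref{prop of u_0^-}.

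Next I would set $w=\pa_xu_*$ and differentiate the equation in $x$, which gives
\[
\pa_tw-\pa_x^2w-b\,\pa_xw-\bigl(b^\prime+p\,a\psi^{p-1}u_*^{p-1}\bigr)w=(a\psi^{p-1})^\prime u_*^p\quad\text{in }\R\times(0,\infty).
\]
On the quadrant $Q=(0,\infty)\times(0,\infty)$ the right-hand side is $\le0$: indeed $(a\psi^{p-1})^\prime=\tfrac1x\,x(a\psi^{p-1})^\prime\le0$ there by \eqref{prop of a}, and $u_*^p\ge0$. The parabolic-boundary data of $w$ on $Q$ are also favourable: $w(0,t)=0$ by evenness, and $w(x,0)=(\psi^{-1}u_0^-)^\prime(x)\le0$ for $x>0$ by \eqref{prop of u_0^-}. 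Finally the coefficients are controlled on $[0,T^\prime]\times\R$ for each $T^\prime<\infty$: one has $b^\prime=2\bigl(V-(\psi^\prime/\psi)^2\bigr)$, which is bounded because $V\in BC(\R)$ and $x\psi^\prime/\psi$ is bounded by \eqref{prop2 of psi}, while $a\psi^{p-1}u_*^{p-1}$ is supported in a fixed compact set on which $u_*$ is bounded. Replacing $w$ by $e^{-\Lambda t}w$ with $\Lambda$ larger than the supremum of $b^\prime+p\,a\psi^{p-1}u_*^{p-1}$ reduces to a parabolic operator whose zeroth-order coefficient is $\le0$, and the maximum principle on $Q$ then yields $w\le0$ on $Q$. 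By evenness $w\ge0$ on $(-\infty,0)\times(0,\infty)$, which is exactly the assertion for $u_*$; the statement for $v_*$ is the special case $a\equiv0$.

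The step I expect to be the main obstacle is that $Q$ is unbounded, so the maximum principle needs a growth bound on $w$ at $x=+\infty$. Here I would use $w=\psi^{-1}\bigl(\pa_xu-(\psi^\prime/\psi)u\bigr)$ with $u\in BC(\R)$, $\psi^\prime/\psi=O(\lr{x}^{-1})$, and $\pa_xu$ bounded on $\R\times[0,T^\prime]$ (parabolic estimates, using the smoothness of $u_0^-$ up to $t=0$); since $\psi^{-1}=O(\lr{x}^{-\alpha})$, $w$ grows at most polynomially in $x$. A polynomially growing subsolution lies well inside the Tychonoff uniqueness class, so the maximum principle applies after inserting the usual barrier — for instance, comparing $e^{-\Lambda t}w$ with $\ep(\lr{x}^2+Kt)^N$ for suitably large $N,K$ and letting $\ep\to0$. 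The remaining ingredients, namely the explicit computation of the equation for $w$ and the verification of the coefficient bounds, are routine.
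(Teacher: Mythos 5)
Your proof is correct and follows essentially the same route as the paper's: differentiate the harmonically transformed equation, use evenness/oddness of $u_*$ together with \eqref{prop of a} and \eqref{prop of u_0^-} to get favourable parabolic-boundary data and a signed source on the half-line, and conclude by the maximum principle (with $v_*$ handled as the case $a\equiv0$). The only immaterial difference is that the paper takes $w=\psi^2\pa_xu_*$ rather than $\pa_xu_*$, which keeps the equation in divergence form and avoids the zeroth-order coefficient $b'$ and the $e^{-\Lambda t}$ rescaling; your explicit treatment of the unbounded domain (polynomial growth of $w$, Tychonoff class) fills in a step the paper passes over with the remark that $\sup w<\infty$.
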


\begin{proof}
	We see that $u_*$ is a solution to the following Cauchy problem:
	\begin{equation}
		\label{eq of u_*}
		\begin{cases}
			\pa_tu_*-\psi^{-2}\pa_x(\psi^2\pa_xu_*)=a\psi^{p-1}u_*^p,&x\in\R,\ t>0,\\
			u_*(x,0)=\psi(x)^{-1}u_0^-(x)\ge0,&x\in\R.
		\end{cases}
	\end{equation}
	By the parabolic regularity theorem, we see that $u_*$ is a $C^3(\R)$-function with respect to $x$.
	We set $w=\psi^2\pa_xu_*$, then differentiating \eqref{eq of u_*} with respect to $x$ and multiplying $\psi^2$ to it, we have
	\begin{equation}
		\label{eq of w}
		\pa_tw-\psi^2\pa_x(\psi^{-2}\pa_xw)=\psi^2(a\psi^{p-1})^\prime u_*^p+pa\psi^{p-1}u_*^{p-1}w,\quad x\in\R,\ t>0.
	\end{equation}
	Noting that $V$, $a$ and $u_0^-$ are even, by the uniqueness of the solution to \eqref{main eq alt}, we see that $u$ is also even with respect to $x$.
	Therefore we see that $w$ is odd with respect to $x$.
	This together with \eqref{prop of u_0^-} implies that
	\[
	\begin{cases}
		w(0,t)=0,&t\ge0,\\
		w(x,0)\le0,&x\ge0.
	\end{cases}
	\]
	Thus, noting that $\psi$ is positive and $\sup_{(x,t)\in\R\times(0,T)}w<\infty$ for every $T>0$, and combining \eqref{prop of a}, \eqref{eq of w} and the comparison principle, we obtain
	\[
	x\pa_xu_*(x,t)\le0,\quad x\ge0,\ t\ge0.
	\]
	Since $u_*$ is even with respect to $x$, the proof is complete.
\end{proof}

Next, we prove the cases \textbf{(B)} and \textbf{(C)}.

\begin{lemma}
	\label{case B and C}
	In the cases \textbf{\textup{(B)}} and \textbf{\textup{(C)}}, \eqref{main eq} does not have nontrivial global-in-time solutions.
\end{lemma}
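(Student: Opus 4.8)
The plan is to reduce by comparison to the auxiliary problem \eqref{main eq alt}, to exploit the radial monotonicity supplied by Lemma~\ref{Pinsky method}, and then to convert the recurrence of the diffusion generated by $\psi^{-2}\pa_x(\psi^2\pa_x\cdot)$ into a self-improving lower bound for the solution near $x=0$, from which a Fujita-type iteration produces a contradiction; the test function method is used both to set up and to close this scheme.

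First, by the comparison principle it suffices to rule out a global-in-time solution $u$ of \eqref{main eq alt} with $a,u_0^-$ as in \eqref{prop of a}--\eqref{prop of u_0^-}. Assuming one exists, I set $u_*=\psi^{-1}u$, which solves \eqref{eq of u_*}; by Lemma~\ref{Pinsky method}, $u_*(\cdot,t)$ and $v_*(\cdot,t):=\psi^{-1}(e^{-tL}u_0^-)$ are radially nonincreasing, and $u_*\ge v_*\ge0$ since $u$ is a supersolution of the linear equation. Testing \eqref{main eq alt} against $\psi\Phi_R$ and letting $R\to\infty$ (as in the proof of Lemma~\ref{case A}, the cut-off errors being controlled via the radial monotonicity of $u$, and using $L\psi=0$) yields the mass information $\frac{d}{dt}\int_\R u\psi\,dx=\int_\R au^p\psi\,dx\ge0$, so in particular $\int_\R u(t)\psi\,dx\ge m_0:=\int_\R u_0^-\psi\,dx>0$ for all $t\ge0$.

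The heart of the matter is a lower bound of the form
\[
u_*(x,t)\ \ge\ v_*(x,t)\ \ge\ c\,m_0\,(1+t)^{-\frac{1+2\alpha}{2}}\qquad(|x|\le1,\ t\ge1),
\]
valid for $-\tfrac12<\alpha<\tfrac12$, with $(1+t)^{-(1+2\alpha)/2}$ replaced by $(\log t)^{-1}$ when $\alpha=-\tfrac12$ and by a fixed positive constant when $\alpha<-\tfrac12$. This is precisely where $\alpha<\tfrac12$, i.e.\ the recurrence $\int_0^{\pm\infty}\psi(y)^{-2}\,dy=\pm\infty$, is used. I would derive it by combining three facts: the conservation $\int_\R v_*(t)\psi^2\,dx=\int_\R\psi\,e^{-tL}u_0^-\,dx=m_0$; a non-escape estimate saying that the $\psi^2\,dx$-mass of $v_*(t)$ outside $\{|x|\le A\sqrt{t}\}$ is at most $\tfrac12 m_0$ for $A$ large (for $-\tfrac12<\alpha<\tfrac12$ this follows from Lemma~\ref{supersol of Sobajima--Wakasugi} after the Liouville change of variables $y=H(x)$ already used in Lemma~\ref{upper bound}; for $\alpha\le-\tfrac12$ it follows from the near-finiteness of $\int_\R\psi^2\,dx$); and the radial monotonicity of $v_*$, which then forces $v_*(0,t)\int_{|x|\le A\sqrt{t}}\psi^2\,dx\ge\tfrac12 m_0$, whence $v_*(0,t)\gtrsim m_0\,t^{-(1+2\alpha)/2}$; a parabolic Harnack inequality upgrades this to a lower bound for $\inf_{|x|\le1}v_*(x,t)$.

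Finally, writing the Duhamel formula for \eqref{eq of u_*} at $x=0$, using that $a$ is supported near the origin, the radial monotonicity of $u_*(\cdot,s)$, and the near-diagonal bound of order $(t-s)^{-(1+2\alpha)/2}$ for the heat kernel of the linear part of \eqref{eq of u_*}, I obtain for $\phi(t):=u_*(1,t)$ an inequality of the form $\phi(t)\gtrsim(1+t)^{-(1+2\alpha)/2}+\int_1^{t-1}(t-s)^{-(1+2\alpha)/2}\phi(s)^p\,ds$. Starting from the lower bound above and iterating, the exponent in $\phi(t)\gtrsim t^{\gamma_k}$ satisfies $\gamma_{k+1}=1-\tfrac{1+2\alpha}{2}+p\gamma_k$, and the hypothesis $1<p<\tfrac{2}{1+2\alpha}$ is exactly what guarantees both $p\gamma_0>-1$ and $\gamma_k\to+\infty$; hence $u_*(1,t)$ would be infinite at a fixed large $t$, a contradiction. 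In case \textbf{(C)} ($\alpha\le-\tfrac12$, all $p>1$), the invariant mass $\int_\R\psi^2\,dx$ is (near-)finite, the diffusion is positive or null recurrent, and the analogous inequality with a constant (or $(\log)^{-1}$) kernel blows up for every $p>1$. The main obstacle will be the recurrence lower bound: one needs a time-uniform, genuinely two-sided near-diagonal heat-kernel estimate for $L$ (equivalently for $\psi^{-2}\pa_x(\psi^2\pa_x\cdot)$) valid throughout $\alpha<\tfrac12$, in particular for $\alpha<0$ where the functional inequalities of Section~\ref{section2} do not apply and $V=\psi''/\psi$ changes sign, together with the matching non-escape estimate and the logarithmic corrections required at the borderline $\alpha=-\tfrac12$.
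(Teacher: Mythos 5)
Your strategy (reduce to \eqref{main eq alt}, invoke Lemma \ref{Pinsky method}, then run a Pinsky-style Duhamel iteration off a near-diagonal heat-kernel lower bound and a non-escape estimate) is genuinely different from the paper's, and as written it has gaps that are not cosmetic. The paper never needs mass conservation, a non-escape estimate, or any heat-kernel bound: after the test-function inequality and the H\"older step, it uses the monotonicity from Lemma \ref{Pinsky method} purely algebraically, via the rescaling $y=\frac{\delta}{\sqrt R}x$, to bound $\int_0^\infty\int_\R u_*^p\Phi_R^*\,dx\,dt$ by $C\sqrt R\int_0^\infty\int_\R au^p\psi\Phi_R\,dx\,dt$ (the integral over $|x|\le\sqrt R$ of the $p$-th power of a radially nonincreasing function is at most $\sqrt R/\delta$ times its integral over $|x|\le\delta$, where $a\ge a(0)/2$ and $\psi\sim1$). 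This closes the scheme as $\int_\R u_0^-\psi\,dx+I_R\le CR^{\frac1{2p}}q(R)\,I_R^{\frac1p}$ with $R^{\frac1{2p}}q(R)\to0$ exactly when $p<\frac2{1+2\alpha}$ or $\alpha\le-\frac12$, and the contradiction is immediate.

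Concretely, three steps of your plan do not go through as stated. First, the identity $\frac{d}{dt}\int_\R u\psi\,dx=\int_\R au^p\psi\,dx$ is asserted, but for $u\in BC(\R)$ and $\alpha>-1$ the quantity $\int_\R u\psi\,dx$ need not even be finite, and the cut-off errors you dismiss are precisely the terms the whole argument must control; this is why the paper works with $\Phi_R$ throughout and never passes to an exact mass identity. Second, your non-escape estimate cannot be derived from Lemma \ref{supersol of Sobajima--Wakasugi}/Lemma \ref{upper bound} as claimed: the pointwise bound $v\le C\lr{x}^\alpha(1+\lr{x}^2+t)^{-\frac{1+2\alpha}2+\delta}$ only gives $\int_{|x|>A\sqrt t}v\psi\,dx\lesssim\int_{|x|>A\sqrt t}\lr{x}^{-1+2\delta}\,dx=\infty$, so the tail mass is not controlled at all; you would need Gaussian off-diagonal decay, which appears nowhere in the paper (and at $\alpha=-\frac12$ the quantity $\int_\R\psi^2\,dx$ is genuinely infinite, so ``near-finiteness'' does not rescue case \textbf{(C)} at the borderline). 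Third, the two-sided near-diagonal kernel estimate driving your iteration is exactly what the paper is structured to avoid (Section \ref{section4} states this explicitly), and you yourself flag it as the main obstacle, in particular for $\alpha<0$ where the functional inequalities of Section \ref{section2} are unavailable. Your iteration arithmetic ($\gamma_{k+1}=1-\frac{1+2\alpha}2+p\gamma_k$, divergence iff $p<\frac2{1+2\alpha}$) is consistent with \cite{P1997}, so the plan could in principle be completed, but only by importing the heat-kernel machinery of \cite{P1997,IK2020}; it is not a proof from the ingredients the paper provides.
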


\begin{proof}
	Assume that there exists a global-in-time solution to \eqref{main eq alt}.
	We can choose $R_0\ge1$ such that $\supp u_0^-\subset\left[-\sqrt{\frac{R_0}{2}},\sqrt{\frac{R_0}{2}}\right]$.
	We fix $\delta\in(0,1)$ satisfying $a(x)\ge\frac{a(0)}{2}$ for every $x\in[-\delta,\delta]$.
	We fix $R\ge R_0$.
	Similarly to the proof of Lemma \ref{case A}, we have
	\begin{equation}
		\label{ineq BC1}
		\int_\R u_0^-\psi\,dx+\int_0^\infty\int_\R au^p\psi\Phi_R\,dx\,dt\le CR^{-1}\int_0^\infty\int_\R u\psi[\Phi_R^*]^{\frac{1}{p}}\,dx\,dt.
	\end{equation}
	We set $u_*=\psi^{-1}u$.
	By the H\"{o}lder inequality, we obtain
	\begin{equation}
		\label{ineq BC2}
		\begin{split}
			R^{-1}\int_0^\infty\int_\R u\psi[\Phi_R^*]^{\frac{1}{p}}\,dx\,dt&\le R^{-1}\left(\int_0^R\int_{-\sqrt{R}}^{\sqrt{R}}\psi^{2p^\prime}\,dx\,dt\right)^{\frac{1}{p^\prime}}\left(\int_0^\infty\int_\R u_*^p\Phi_R^*\,dx\,dt\right)^{\frac{1}{p}}\\
			&\le Cq(R)\left(\int_0^\infty\int_\R u_*^p\Phi_R^*\,dx\,dt\right)^{\frac{1}{p}},
		\end{split}
	\end{equation}
	where
	\[
	q(R)=
	\begin{cases}
		R^{-\frac{1}{p}},&\text{if}\ 1<p<\frac{1}{1+2\alpha}\ \text{or}\ \alpha\le-\frac{1}{2},\\
		R^{-\frac{1}{p}}(\log R)^{\frac{1}{p^\prime}},&\text{if}\ p=\frac{1}{1+2\alpha},\\
		R^{-1+\frac{3}{2p^\prime}+\alpha},&\text{if}\ \frac{1}{1+2\alpha}<p<\frac{2}{1+2\alpha}.
	\end{cases}
	\]
	Changing of variables $y=\frac{\delta}{\sqrt{R}}x$, by Lemma \ref{Pinsky method}, we get
	\begin{equation}
		\label{ineq BC3}
		\begin{split}
\int_0^\infty\int_\R u_*^p\Phi_R^*\,dx\,dt&=\frac{\sqrt{R}}{\delta}\int_0^\infty\int_{-\delta}^\delta u_*\left(\frac{\sqrt{R}}{\delta}y,t\right)^p\Phi_R^*\left(\frac{\sqrt{R}}{\delta}y,t\right)\,dy\,dt\\
			&\le C\sqrt{R}\int_0^\infty\int_{-\delta}^\delta au_*^p\Phi_R\,dy\,dt\\
			&\le C\sqrt{R}\int_0^\infty\int_{-\delta}^\delta au^p\psi\Phi_R\,dy\,dt\\
			&\le C\sqrt{R}\int_0^\infty\int_\R au^p\psi\Phi_R\,dx\,dt.
		\end{split}
	\end{equation}
	Combining \eqref{ineq BC1}, \eqref{ineq BC2} and \eqref{ineq BC3}, we see that
	\begin{equation}
		\label{ineq BC4}
		\int_\R u_0^-\psi\,dx+\int_0^\infty\int_\R au^p\psi\Phi_R\,dx\,dt\le CR^{\frac{1}{2p}}q(R)\left(\int_0^\infty\int_\R au^p\psi\Phi_R\,dx\,dt\right)^{\frac{1}{p}}.
	\end{equation}
	Noting that $R^\frac{1}{2p}q(R)\to0$ as $R\to\infty$, by \eqref{ineq BC4}, we have
	\[
	\lim_{R\to\infty}\int_0^\infty\int_\R au^p\psi\Phi_R\,dx\,dt=0.
	\]
	This together with \eqref{ineq BC4} implies that
	\[
	0<\int_\R u_0^-\psi\,dx\le0,
	\]
	which is a contradiction.
	The proof is complete.
\end{proof}

Next, we prove the case \textbf{(D)}.
The following lemma is used in the proof of the case \textbf{(D)} together with Lemma \ref{Pinsky method} (a similar treatment can be found in \cite{S2024}).

\begin{lemma}
	\label{optimal decay}
	Assume that $\alpha>-\frac{1}{2}$ and $f\in L^{1,\alpha}$.
	We set $v(t)=e^{-tL}f$ and $v_*=\psi^{-1}v$.
	If
	\[
	\int_\R f\psi\,dx\neq0,
	\]
	then there exists a positive constant $C_\textup{L}$ such that for every $t\ge0$,
	\begin{equation}
		\int_0^t\|v_*\|_{L^\infty}^{\frac{2}{1+2\alpha}}\,ds\ge C_\textup{L}\log(1+t).
	\end{equation}
\end{lemma}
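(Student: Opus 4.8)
\emph{Proof plan.} The approach is the test function method for the linear equation, exploiting that by \eqref{eq of psi} the function $\psi$ is a positive solution of $-\psi^{\prime\prime}+V\psi=0$, i.e.\ $L\psi=0$. Without loss of generality assume $I_0:=\int_\R f\psi\,dx>0$ (otherwise replace $f$ by $-f$, which leaves $\|v_*(t)\|_{L^\infty}$ unchanged). The first step is a conservation law: I would show $\int_\R v(t)\psi\,dx=I_0$ for all $t\ge0$. This quantity is well defined since Lemma \ref{contr and posi} \textbf{(i)} gives $\psi v(t)\in L^1(\R)$ with $\|\psi v(t)\|_{L^1}\le\|\psi f\|_{L^1}$; differentiating in $t$, using $\pa_tv=v^{\prime\prime}-Vv$, integrating by parts twice and invoking $\psi^{\prime\prime}=V\psi$ to annihilate both the boundary terms and the bulk term $\int_\R v(\psi^{\prime\prime}-V\psi)\,dx$ yields $\frac{d}{dt}\int_\R v(t)\psi\,dx=0$ (first for $f\in C_0^\infty(\R)$, where $v(t)$ has Gaussian decay, then by density).

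Next I would localize the mass in a diffusive ball. Fix $R>0$ and a cutoff $\zeta_R\in C^\infty(\R)$ with $0\le\zeta_R\le1$, $\zeta_R=0$ on $\{|x|\le R\}$, $\zeta_R=1$ on $\{|x|\ge 2R\}$, $|\zeta_R^\prime|\le CR^{-1}$ and $|\zeta_R^{\prime\prime}|\le CR^{-2}$. By Kato's inequality $|v|$ is a distributional subsolution of $\pa_tw-w^{\prime\prime}+Vw=0$ (when $f\ge0$, as in the application to case \textbf{(D)}, simply $|v|=v$); testing this against the nonnegative function $\psi\zeta_R$ and using $\psi^{\prime\prime}=V\psi$ to cancel the leading terms gives
\[
\frac{d}{dt}\int_\R|v|\psi\zeta_R\,dx\le\int_\R|v|\,\bigl(2\psi^\prime\zeta_R^\prime+\psi\zeta_R^{\prime\prime}\bigr)\,dx .
\]
On $\{R\le|x|\le 2R\}$ the bound \eqref{prop2 of psi} gives $|\psi^\prime|\le C\psi/|x|\le C\psi R^{-1}$, so the integrand on the right is $\le C\psi R^{-2}$ there and $0$ elsewhere; hence the right-hand side is $\le CR^{-2}\|\psi v(t)\|_{L^1}\le CR^{-2}\|\psi f\|_{L^1}$, and integrating in $t$ (using $\zeta_R\equiv1$ on $\{|x|\ge 2R\}$) yields
\[
\int_{|x|\ge 2R}|v(t)|\psi\,dx\le\int_{|x|\ge R}|f|\psi\,dx+\frac{Ct}{R^2}\|\psi f\|_{L^1}.
\]

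From here the lower bound on $\|v_*(t)\|_{L^\infty}$ follows. Since $\psi f\in L^1(\R)$, pick $R_*$ with $\int_{|x|\ge R_*}|f|\psi\,dx\le I_0/4$, and for $t$ large enough that $R(t):=(4C\|\psi f\|_{L^1}/I_0)^{1/2}t^{1/2}\ge R_*$ the previous bound gives $\int_{|x|\ge 2R(t)}|v(t)|\psi\,dx\le I_0/2$. Writing $v=\psi v_*$ and using $\int_{|x|\le 2R(t)}\psi^2\,dx\le CR(t)^{1+2\alpha}$ (valid as $1+2\alpha>0$ and $\psi\le\psi_2\lr{\cdot}^\alpha$), the conservation law gives
\[
I_0=\int_\R v_*(t)\psi^2\,dx\le\|v_*(t)\|_{L^\infty}\int_{|x|\le 2R(t)}\psi^2\,dx+\int_{|x|\ge 2R(t)}|v(t)|\psi\,dx\le C\|v_*(t)\|_{L^\infty}R(t)^{1+2\alpha}+\frac{I_0}{2},
\]
so $\|v_*(t)\|_{L^\infty}\ge cR(t)^{-1-2\alpha}\ge c^\prime t^{-\frac{1+2\alpha}{2}}$ for all $t\ge t_0$, where $t_0,c,c^\prime>0$ depend only on $f$. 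By Lemma \ref{contr and posi} \textbf{(ii)} and the semigroup property $t\mapsto\|v_*(t)\|_{L^\infty}$ is nonincreasing, hence bounded below by a positive constant on $[0,t_0]$; combining the two regimes, using $\log(1+t)\le t$ on $[0,t_0]$ and $\int_{t_0}^t(1+s)^{-1}\,ds=\log\frac{1+t}{1+t_0}$ for $t\ge t_0$, one obtains $\int_0^t\|v_*\|_{L^\infty}^{2/(1+2\alpha)}\,ds\ge C_\textup{L}\log(1+t)$ after patching over the intermediate range and shrinking $C_\textup{L}$.

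The main obstacle is the localization step: one must prevent mass from escaping to spatial infinity faster than the diffusive scale $\sqrt t$, uniformly in $t$, and for sign-changing $f$ this requires Kato's inequality. The decisive gain there is the cancellation $(\psi\zeta_R)^{\prime\prime}-V(\psi\zeta_R)=2\psi^\prime\zeta_R^\prime+\psi\zeta_R^{\prime\prime}$ (i.e.\ $L\psi=0$) together with the control \eqref{prop2 of psi} of $\psi^\prime/\psi$; the remaining steps are essentially bookkeeping, modulo the density argument needed to justify the conservation identity.
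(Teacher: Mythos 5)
Your argument is correct, but it is genuinely different from the paper's. The paper stays entirely inside the test function framework of Lemma \ref{prop of test func}: it pairs the linear equation with the compactly supported test function $\psi\Phi_R$, applies H\"{o}lder's inequality with exponents $\tfrac{3+2\alpha}{2}$ and $\tfrac{3+2\alpha}{1+2\alpha}$ so that the factor $|v_*|^{\frac{2}{1+2\alpha}}$ appears by writing $|v|\psi=|v_*|\psi^2$, and then produces the logarithm by averaging the resulting scale-by-scale inequality over $R$ with the measure $\tfrac{dr}{r}$ (the auxiliary function $Y(R)$), finishing with the $L^{1,\alpha}$-contraction of Lemma \ref{contr and posi} \textbf{(i)}; it never proves a pointwise-in-time lower bound. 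You instead prove the conservation law $\int_\R v(t)\psi\,dx=\int_\R f\psi\,dx$ (using $L\psi=0$ from \eqref{eq of psi}) together with a no-escape-of-mass estimate on the diffusive scale, and deduce the stronger pointwise bound $\|v_*(t)\|_{L^\infty}\ge ct^{-\frac{1+2\alpha}{2}}$ for large $t$, which integrates to the claim since $\bigl(t^{-\frac{1+2\alpha}{2}}\bigr)^{\frac{2}{1+2\alpha}}=t^{-1}$. Your route yields more — the sharp decay rate that the lemma's name alludes to, matching the upper bound of Lemma \ref{decay} when $\alpha\ge0$ — but it costs extra technical input that the paper avoids: Kato's inequality for sign-changing $f$, and a justification of the integration by parts against the non-compactly-supported weights $\psi$ and $\psi\zeta_R$ (your $\zeta_R$ equals $1$ near infinity, so you need an additional truncation at a larger scale, using the Gaussian decay of $v(t)$ for compactly supported data plus density, and the bound $|\psi^\prime|\le C\psi/\lr{x}$ from \eqref{prop2 of psi}); also note that the smoothing estimate of Lemma \ref{decay} is only stated for $\alpha\ge0$, so your reliance on Lemma \ref{contr and posi} \textbf{(ii)} and the semigroup property for the monotonicity of $\|v_*(t)\|_{L^\infty}$ on the initial interval is the right move. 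These points are all repairable by standard arguments, so I regard your proof as a valid alternative rather than a gap.
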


\begin{proof}
	We only need to consider the case
	\[
	\int_\R f\psi\,dx>0.
	\]
	We denote $\chi$ as the indicator function of the interval $\left(\frac{1}{2},1\right)$.
	By the dominated convergence theorem, we can choose $R_0>0$ such that for every $R\ge R_0$,
	\[
	\int_\R f\psi\Phi_R(\cdot,0)\,dx\ge\frac{1}{2}\int_\R f\psi\,dx.
	\]
	We fix $R\ge R_0$.
	By Lemma \ref{prop of test func}, we have
	\begin{equation}
		\label{ineq OD1}
		\begin{split}
			0&=\int_0^\infty\int_\R(\pa_tv+Lv)\psi\Phi_R\,dx\,dt\\
			&=\int_0^\infty\frac{d}{dt}\left(\int_\R v\psi\Phi_R\,dx\right)\,dt+\int_0^\infty\int_\R v(-\pa_t(\psi\Phi_R)+L(\psi\Phi_R))\,dx\,dt\\
			&\le-\int_\R f\psi\Phi_R(\cdot,0)\,dx+CR^{-1}\int_0^\infty\int_\R|v|\psi[\Phi_R^*]^{\frac{1}{p}}\,dx\,dt\\
			&\le-\frac{1}{2}\int_\R f\psi\,dx+CR^{-1}\int_0^\infty\int_\R|v|\psi\chi(\xi_R)\,dx\,dt.
		\end{split}
	\end{equation}
	By the H\"{o}lder inequality, we obtain
	\[
		\begin{split}
			&R^{-1}\int_0^\infty\int_\R|v|\psi\chi(\xi_R)\,dx\,dt\\
			&\le R^{-1}\left(\int_0^\infty\int_\R\psi^2\chi(\xi_R)\,dx\,dt\right)^{\frac{2}{3+2\alpha}}\left(\int_0^\infty\int_\R|v_*|^{\frac{2}{1+2\alpha}}|v|\psi\chi(\xi_R)\,dx\,dt\right)^{\frac{1+2\alpha}{3+2\alpha}}\\
			&\le C\left(\int_0^\infty\int_\R|v_*|^{\frac{2}{1+2\alpha}}|v|\psi\chi(\xi_R)\,dx\,dt\right)^{\frac{1+2\alpha}{3+2\alpha}}.
		\end{split}
	\]
	Combining the above inequality with \eqref{ineq OD1}, we get
	\begin{equation}
		\label{ineq OD3}
		\left(\frac{1}{2}\int_\R f\psi\,dx\right)^{\frac{3+2\alpha}{1+2\alpha}}\le C\int_0^\infty\int_\R|v_*|^{\frac{2}{1+2\alpha}}|v|\psi\chi(\xi_R)\,dx\,dt.
	\end{equation}
	We define an auxiliary function $Y\colon[R_0,\infty)\to[0,\infty)$ by
	\[
	Y(R)=\int_0^R\int_0^\infty\int_\R|v_*|^{\frac{2}{1+2\alpha}}|v|\psi\chi(\xi_r)\,dx\,dt\,\frac{dr}{r},\quad R\ge R_0,
	\]
	then, by \eqref{ineq OD3}, we see that $R^{-1}\le CY'(R)$.
	This implies that
	\begin{equation}
		\label{ineq OD4}
		Y(R_0)+\log\left(\frac{R}{R_0}\right)\le CY(R).
	\end{equation}
	On the other hand, we have
	\[
	\begin{split}
		Y(R)&=\int_0^\infty\int_\R|v_*|^{\frac{2}{1+2\alpha}}|v|\psi\int_0^R\chi(\xi_r)\,\frac{dr}{r}\,dx\,dt\\
		&=\int_0^\infty\int_\R|v_*|^{\frac{2}{1+2\alpha}}|v|\psi\int_{\xi_R}^\infty\chi(s)\,\frac{ds}{s}\,dx\,dt\\
		&\le\log2\int_0^R\int_\R|v_*|^{\frac{2}{1+2\alpha}}|v|\psi\,dx\,dt\\
		&\le\log2\int_0^R\|v_*\|_{L^\infty}^{\frac{2}{1+2\alpha}}\|\psi v\|_{L^1}\,dt.
	\end{split}
	\]
	This together with Lemma \ref{contr and posi} \textbf{(i)}, we obtain
	\begin{equation}
		\label{ineq OD5}
		Y(R)\le C\int_0^R\|v_*\|_{L^\infty}^{\frac{2}{1+2\alpha}}\,dt.
	\end{equation}
	Combining \eqref{ineq OD4} and \eqref{ineq OD5}, we have the desired result.
\end{proof}

\begin{lemma}
	\label{case D}
	In the case \textbf{\textup{(D)}}, \eqref{main eq} does not have nontrivial global-in-time solutions.
\end{lemma}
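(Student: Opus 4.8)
The plan is to combine the test function method of the proof of Lemma~\ref{case B and C}, the monotonicity of Lemma~\ref{Pinsky method} (the modified version of the method of \cite{P1997}), and the linear growth estimate of Lemma~\ref{optimal decay} (LTFM); the new feature of case \textbf{(D)} is that, since $p=\frac{2}{1+2\alpha}$ is exactly critical, the test function method no longer self-improves and the argument must be closed by the linear estimate. As in the cases \textbf{(B)}, \textbf{(C)}, by the comparison principle I work with a global-in-time solution $u$ to \eqref{main eq alt}, set $u_*=\psi^{-1}u$, and put $v=e^{-tL}u_0^-$, $v_*=\psi^{-1}v$. From the corresponding integral equation for \eqref{main eq alt} and the positivity of $\{e^{-tL}\}_{t\ge0}$ one has $u\ge v$, hence $u_*\ge v_*$, and by Lemma~\ref{Pinsky method} both $u_*(t)$ and $v_*(t)$ are nonincreasing in $|x|$, so $\|u_*(t)\|_{L^\infty}=u_*(0,t)$ and $\|v_*(t)\|_{L^\infty}=v_*(0,t)$.

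First I would rerun the chain \eqref{ineq BC1}--\eqref{ineq BC3}: testing the equation against $\psi\Phi_R$, using Lemma~\ref{prop of test func}, the H\"older inequality with weight $\psi^{2p'}$ on $\{|x|\le\sqrt R,\ 0\le t\le R\}$, the scaling $y=\frac{\delta}{\sqrt R}x$ and Lemma~\ref{Pinsky method}, one gets
\[
\int_\R u_0^-\psi\,dx+J_R\le C\,R^{\frac{1}{2p}}q(R)\,J_R^{1/p},\qquad J_R:=\int_0^\infty\!\!\int_\R au^p\psi\Phi_R\,dx\,dt,
\]
where $q(R)=R^{-1}\big(\int_0^R\!\int_{-\sqrt R}^{\sqrt R}\psi^{2p'}\,dx\,dt\big)^{1/p'}\le CR^{-1+\alpha+\frac{3}{2p'}}$ since $2\alpha p'=\frac{4\alpha}{1-2\alpha}>-1$ for $\alpha>-\frac12$. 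At $p=\frac{2}{1+2\alpha}$ one computes $\frac{1}{2p}+\frac{3}{2p'}+\alpha-1=\frac32-\frac1p+\alpha-1=0$, so the prefactor is bounded but does not tend to $0$; hence this inequality only yields $J_R\le C$ for all $R\ge R_0$, and letting $R\to\infty$ (monotone convergence, $\Phi_R\uparrow1$) gives $\int_0^\infty\!\int_\R au^p\psi\,dx\,dt<\infty$. Fixing $\delta\in(0,1)$ with $a\ge\frac{a(0)}{2}$ on $[-2\delta,2\delta]$ and using $u^p\psi=a\psi^{p+1}u_*^p\ge c\,u_*(x,t)^p\ge c\,v_*(\delta,t)^p$ for $|x|\le\delta$ (Lemma~\ref{Pinsky method} and $u_*\ge v_*$), I obtain
\[
c'\int_0^\infty v_*(\delta,t)^p\,dt\le\int_0^\infty\!\!\int_\R au^p\psi\,dx\,dt<\infty .
\]

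To contradict this I would invoke LTFM and a local Harnack inequality. Since $\int_\R u_0^-\psi\,dx>0$, Lemma~\ref{optimal decay} gives $\int_0^t\|v_*(s)\|_{L^\infty}^{\frac{2}{1+2\alpha}}\,ds\ge C_\textup{L}\log(1+t)$, i.e. (as $p=\frac{2}{1+2\alpha}$ and $\|v_*(s)\|_{L^\infty}=v_*(0,s)$) $\int_0^\infty v_*(0,t)^p\,dt=\infty$. On the other hand $v_*$ is a nonnegative solution of the uniformly parabolic equation $\pa_tv_*=\pa_x^2v_*+2\tfrac{\psi'}{\psi}\pa_xv_*$, whose coefficients are smooth and bounded near $x=0$; the interior parabolic Harnack inequality on cylinders centered at $x=0$ of a fixed size then gives fixed $\tau_0>0$, $t_0>0$ and $C_\textup{H}>0$ with $v_*(0,t)\le C_\textup{H}\,v_*(\delta,t+\tau_0)$ for $t\ge t_0$. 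Hence $\int_0^\infty v_*(\delta,t)^p\,dt\ge C_\textup{H}^{-p}\int_{t_0}^\infty v_*(0,s)^p\,ds=\infty$, which contradicts the last display, completing the proof.

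The main obstacle is precisely this criticality: the test function method here returns only the finiteness of $\int\!\int au^p\psi$, not its vanishing, so the contradiction must be produced by the recurrence-type lower bound of Lemma~\ref{optimal decay}, fed back through the monotonicity of $u_*$ and the comparison $u_*\ge v_*$. The one genuinely new technical ingredient is the local Harnack step transferring the lower bound from $x=0$ to $x=\delta$; because it is needed only on a neighborhood of the origin, where the transformed operator has smooth bounded coefficients, this is the standard (Moser) interior parabolic Harnack inequality and requires no global heat-kernel estimate.
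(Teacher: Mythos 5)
Your proposal is correct and follows essentially the same route as the paper: the test function method at the critical exponent yields finiteness of $\int_0^\infty\int_\R au^p\psi\,dx\,dt$, which is then bounded from below, via $u\ge e^{-tL}u_0^-$, the monotonicity from Lemma \ref{Pinsky method} and an interior parabolic Harnack inequality near the origin, by $\int\|v_*\|_{L^\infty}^p\,dt$, contradicting Lemma \ref{optimal decay}. The only cosmetic difference is that the paper applies the Harnack inequality to $v$ on the cylinders $Q_\pm(t)$ rather than to the transformed function $v_*$, and orders the monotonicity and Harnack steps slightly differently.
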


\begin{proof}
	Assume that there exists a global-in-time solution to \eqref{main eq alt}.
	We set $v(t)=e^{-tL}u_0^-$ and $v_*=\psi^{-1}v$.
	Let $R_0\ge1$ and $\delta\in(0,1)$ be the same as in the proof of Lemma \ref{case B and C}.
	Similarly to the proof of Lemma \ref{case B and C}, we have for every $R\ge R_0$,
	\[
	\int_\R u_0^-\psi\,dx+\int_0^\infty\int_\R au^p\psi\Phi_R\,dx\,dt\le C\left(\int_0^\infty\int_\R au^p\psi\Phi_R\,dx\,dt\right)^\frac{1}{p}.
	\]
	This together with the Fatou lemma implies that
	\begin{equation}
		\label{ineq D}
		\int_0^\infty\int_\R au^p\psi\,dx\,dt\le\liminf_{R\to\infty}\int_0^\infty\int_\R au^p\psi\Phi_R\,dx\,dt<\infty.
	\end{equation}
	By the Harnack inequality, we obtain for every $t>0$,
	\begin{equation}
		\label{Harnack}
		\sup_{Q_-(t)}v\le C\inf_{Q_+(t)}v,
	\end{equation}
	where
	\[
	Q_-(t)=(-\delta,\delta)\times\left(t+\frac{2}{3}\delta^2,t+\frac{4}{3}\delta^2\right)
	\]
	and
	\[
	Q_+(t)=(-\delta,\delta)\times\left(t+\frac{8}{3}\delta^2,t+4\delta^2\right).
	\]
	Combining \eqref{Harnack} and Lemma \ref{Pinsky method}, we get
	\[
	\begin{split}
		\int_0^\infty\int_\R au^p\psi\,dx\,dt&\ge C\int_{\frac{10}{3}\delta^2}^\infty\int_{-\delta}^\delta v^p\,dx\,dt\\
		&\ge C\int_0^\infty\int_{-\delta}^\delta\left(\inf_{Q_+(t)}v\right)^p\,dx\,dt\\
		&\ge C\int_0^\infty\int_{-\delta}^\delta\left(\sup_{Q_-(t)}v\right)^p\,dx\,dt\\
		&\ge C\int_{\delta^2}^\infty v_*(0,t)^p\,dt\\
		&=C\int_{\delta^2}^\infty\|v_*\|_{L^\infty}^p\,dt.
	\end{split}
	\]
	By Lemma \ref{optimal decay}, this contradicts \eqref{ineq D}.
	The proof is complete.
\end{proof}

Finally, we deduce the desired result.

\begin{proof}[\textbf{\textup{Proof of Theorem \ref{main thm} \text{(i)}.}}]
	Combining Lemma \ref{case A}, Lemma \ref{case B and C} and Lemma \ref{case D}, we have the desired result.
\end{proof}

\subsection*{Declarations}
\begin{description}
	\item[Data availability] Data sharing not applicable to this article as no datasets were generated or analysed during the current study.
	\item[Conflict of interest] The authors declare that they have no conflict of interest.
\end{description}

{\small

}

\end{document}